\newcommand{\ct}{u}
\newcommand{\st}{y}
\newcommand{\ad}{p}
\newcommand{\mix}{z}
\newcommand{\mixad}{q} 
\newtheorem{assumption}{Assumption}
\theoremstyle{plain}
\newtheorem{theorem}[assumption]{Theorem}
\theoremstyle{plain}
\newtheorem{remark}[assumption]{Remark}
\theoremstyle{plain}
\theoremstyle{plain}
\newtheorem{lemma}[assumption]{Lemma}
\theoremstyle{plain}
\theoremstyle{plain}
\theoremstyle{plain}
\newtheorem{corollary}[assumption]{Corollary}
\theoremstyle{plain}
\let\c@algocf\relax 
\newaliascnt{algocf}{assumption}
\DeclareMathOperator{\supp}{supp}
\DeclareMathOperator{\Span}{span}
\DeclareMathOperator{\sign}{sign}
\definecolor{Horange}{RGB}{200,100,0}
\title{Variational discretization of one-dimensional elliptic optimal control problems with BV functions based on the mixed formulation}
\author{Evelyn Herberg\footnote{Department of Mathematical Sciences, George Mason University, 4400 University Dr, Fairfax, VA 22030, U.S.A.}$\,$ , Michael Hinze\footnote{Mathematisches Institut, Universität Koblenz-Landau, Campus Koblenz, Universitätsstraße 1, 56070 Koblenz, Germany.}}
\date{\today}
\begin{document}

	\maketitle
	
\textbf{Abstract.} We consider optimal control of an elliptic two-point boundary value problem governed by functions of bounded variation (BV). The cost functional is composed of a tracking term for the state and the BV-seminorm of the control. We use the mixed formulation for the state equation together with the variational discretization approach, where we use the classical lowest order Raviart-Thomas finite elements for the state equation. Consequently the variational discrete control is a piecewise constant function over the finite element grid. We prove error estimates for the variational discretization approach in combination with the mixed formulation of the state equation and confirm our analytical findings with numerical experiments. \\



\section{Problem formulation} \label{sec:HHNProblem}


We consider the optimal control problem
\begin{equation}
	\min_{\ct \in BV(\varOmega)} J(\ct) \coloneqq \tfrac{1}{2} \| \st - \st_{\operatorname{d}} \|_{L^2(\varOmega)}^2 + \alpha \, \| \ct' \|_{\mathcal{M}(\varOmega)},
	\tag{$P$}
	\label{eq:HHNproblem}
\end{equation}
where $\st$ satisfies the one-dimensional elliptic {equation
\begin{equation}
	\begin{cases}
		- (a \st')' + d \st &= \ct \qquad \text{in} \; \varOmega ,\\
		\qquad\qquad\;\, \st &= 0 \qquad \text{on} \; \Gamma. \\
	\end{cases}
	\label{eq:HHNPDE}
\end{equation}
}

Here $\varOmega =(0,1)$ with boundary $\Gamma = \left\{0,1\right\}$, and $\alpha >0$ is a given parameter. We assume $a \in W^{1,\infty}(\varOmega), a \ge a_0 > 0$ a.e. in $\varOmega$, where $a_0$ is a constant, and $d \in L^{\infty}(\varOmega), d \ge 0$ a.e. in $\varOmega$.
We denote the control by $\ct$ which we seek in $BV(\varOmega)$, the state by $\st \in H^1_0(\varOmega)$, and the desired state by $\st_{\operatorname{d}} \in L^{\infty}(\varOmega)$.

Our work is motivated by \cite{hafemeyer2019}, where a similar optimal control problem is considered. There, variational discretization (from \cite{VD}) combined with the classical piecewise linear and continuous finite element approximation of the state is investigated, and also a fully discrete approach with piecewise constant control approximations.
	The variational discrete approach leads to the approximation order of $h^2$ for both, control in $L^1$ and state in $L^2$, whereas the fully discrete approach only gives the optimal approximation order $h$.
We here propose a variational discrete approach, which automatically delivers piecewise constant control approximations.
{Although this approach also only delivers the optimal approximation order of $h$ for the state in $L^2$ and for the control in $L^1$, it allows a more elegant and more natural numerical analysis than the fully discrete approach presented in \cite{hafemeyer2019}.
This is achieved by formulating the elliptic partial differential equation in its mixed form. Variational discretization based on the classical Raviart-Thomas discretization of the state equation then delivers piecewise constant control approximations, while keeping the corresponding variationally discrete, reduced optimization problem infinite-dimensional. This in turn then simplifies the numerical analysis, since e.g. the optimal control of the continuous problem in this approach can be used as comparison function in the variationally discretized optimization problem.

We give a brief overview of related literature. An early result in optimization with BV functions and regularization by BV-seminorms is \cite{CasasKunischPola}. Further studies involving BV functions are \cite{Bartels2012,BartelsMilicevic,BrediesVicente}. There exist studies of elliptic optimal control with total variation regularization and control in $L^{\infty}(\varOmega)$, see \cite{ClasonKruseKunisch,HinzeKaltenbacherQuyen}. Controls from the space $BV(\varOmega) \cap L^{\infty}(\varOmega)$ are considered in \cite{CasasKogutLeugering}. 
Optimal control governed by a semilinear parabolic equation and control cost in a total bounded variation seminorm is discussed in \cite{CasasKruseKunisch}, a convergence result is shown and numerical experiments are presented. A similar problem is analyzed in \cite{CasasKunischBV}, but with semilinear elliptic equation. Numerical results for problems with BV-control are derived in \cite{trautmannwalter2021}.
In \cite{HinzeQuyen} the BV source in an elliptic system is recovered. 
Furthermore, we remark that the inherent sparsity structure of the problem is closely related to the sparsity structure observed in optimal control problems with measures control, see e.g. \cite{CasKun,CK19,HH,HerbergHS}.

We structure this work as follows: In Section~\ref{sec:HHNCont} we introduce the mixed formulation of the state equation, prove existence of a unique solution to the elliptic optimal control problem and derive its optimality conditions and sparsity structure. We apply variational discretization to the problem in Section~\ref{sec:HHNVD} and discuss the resulting structure of the non-discretized controls. Then, we proceed analogously to the analysis of the continuous problem by proving existence of a solution, deriving optimality conditions and sparsity structure. We also examine error estimates. Finally, in Section~\ref{sec:HHNCompRes} we explain the numerical implementation and present computational results for two different examples. We compare our findings to the experiments from \cite{hafemeyer2019}.

\section{Continuous optimality system} \label{sec:HHNCont}
We begin by examining the state equation. We set $\mix = a \st'$. Then the state $\st$ is supposed to solve \eqref{eq:HHNPDE} in the following weak sense: Find $(\mix,\st) \in H^1(\varOmega) \times L^2(\varOmega) $, such that
\begin{subequations}\label{eq:HHNPDEmixedweak}
	\begin{align}
		\int_{\varOmega} \left(\frac{1}{a} \mix v + v' \st \right) \, \textcolor{black}{\rm{dx}} &= 0 \qquad\quad\qquad\;\;\;\, \forall v \in H^1(\varOmega),\\
		\int_{\varOmega} \left( - \mix' w + d \st w \right) \, \textcolor{black}{\rm{dx}}  &=  \int_{\varOmega} w \ct \, \textcolor{black}{\rm{dx}} \qquad\;\;\, \forall w \in L^2(\varOmega) .
	\end{align}
\end{subequations} 
\noindent Here we write $(\mix,\st) = (\mix(\ct),\st(\ct))$ for the solution of \eqref{eq:HHNPDEmixedweak}. We know by \cite[Theorem 1]{RaviartThomas} that $(\mix(\ct),\st(\ct))$ admits a unique solution $(\mix,\st) \in H^1(\varOmega) \times H^1_0(\varOmega)$, where $\st$ solves \eqref{eq:HHNPDE} and $\mix = a \st'$. Furthermore, we define the forms $a(\mix,v):= \int_{\varOmega} \frac{1}{a} \mix v$, $b(v,\st):= \int_{ \varOmega} v' \st$, $c(\st,w):= \int_{ \varOmega} d \st w$ for all $(v,w) \in H^1(\varOmega)\times L^2(\varOmega)$. Then, for given $\ct \in L^2(\varOmega)$, the pair $(\mix,\st) = (\mix(\ct),\st(\ct))$ solves \eqref{eq:HHNPDEmixedweak}, iff
\begin{align*}
	& a(\mix,v) + b(v,\st) - b(\mix,w) + c(\st,w) = (\ct,w)\textcolor{black}{_{L^2(\varOmega)} =:}
	 \left(
	 \begin{pmatrix}
		0 \\ \ct
	\end{pmatrix},
\begin{pmatrix}
	v \\ w
\end{pmatrix}
 \right)\textcolor{black}{_\varOmega}
 	\quad \forall (v,w) \in H^1(\varOmega)\times L^2(\varOmega).
\end{align*}
Analogously, we for $s \in L^2(\varOmega)$ define the pair $(\mixad, \ad)= (\mixad(s),\ad(s))$ as the unique solution to
\begin{align} \label{eq:HHNadjointequation}
	a(v,\mixad)+b(\mixad,w)-b(v,\ad)+c(w,\ad) = (s,w)\textcolor{black}{_{L^2(\varOmega)} =:}
	 \left(
	\begin{pmatrix}
		0 \\ s
	\end{pmatrix} ,
	\begin{pmatrix}
	v \\ w
\end{pmatrix} 
\right)\textcolor{black}{_\varOmega}
	\quad \forall (v,w) \in H^1(\varOmega)\times L^2(\varOmega).
\end{align} 

\begin{remark}
	We note that we also may allow data in the first component of the vectors $(0,u)^{\top}, (0,s)^{\top}$. However, we in the present work only consider control problems which directly affect the state $\st$ by the control $\ct$, and only observations of the state $\st$, not of the derivative $\mix$ of $\st$.
	Introducing the mixed formulation offers the opportunity of including quantities containing $\mix=\st'$ in the target functional. 
	Furthermore, \eqref{eq:HHNadjointequation} constitutes the weak form of the adjoint equation associated to \eqref{eq:HHNPDE}.
\end{remark}

%
%
Let us note that problem \eqref{eq:HHNPDEmixedweak} for $\ct \in L^2(\varOmega)$ admits a unique solution $(\mix(\ct),\st(\ct)) \in H^1_0(\varOmega) \times H^1(\varOmega)$, which satisfies
	\begin{equation}\label{eq:HHNstateH2}
	||\st(\ct)||_{H^2(\varOmega)} + ||\mix(\ct)||_{H^1(\varOmega)} \leq C ||\ct ||_{L^2(\varOmega)},
\end{equation}
with some $C>0$, compare \cite[Lemma 2.2.]{gong2011mixed}.
%
%
%
%
%
Also, we with \cite[Theorem 2.2.]{hafemeyer2019} directly have

\begin{theorem}\label{thm:HHNuniquesolution}
	Problem \eqref{eq:HHNproblem} admits a unique solution $\bar \ct \in BV(\varOmega)$ with associated optimal state $\bar \st \in H^1_0(\varOmega) \cap H^2(\varOmega)$ and associated $\bar \mix \in H^1(\varOmega)$.
\end{theorem}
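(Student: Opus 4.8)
The plan is to apply the direct method of the calculus of variations for existence, to use strict convexity for uniqueness, and then to read off the state regularity from the estimate \eqref{eq:HHNstateH2}. Since $J \geq 0$, the value $m := \inf_{\ct \in BV(\varOmega)} J(\ct)$ is a finite nonnegative number, so I may fix a minimizing sequence $(\ct_n) \subset BV(\varOmega)$ with $J(\ct_n) \to m$. The first and central task is to show that $(\ct_n)$ is bounded in $BV(\varOmega)$. The seminorm term in \eqref{eq:HHNproblem} immediately gives a uniform bound on $\|\ct_n'\|_{\mathcal{M}(\varOmega)}$, but this controls only the oscillation of $\ct_n$ and says nothing about its constant part, on which the seminorm vanishes; this is exactly where the main difficulty lies.

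To bound the constant part I would split $\ct_n = c_n + w_n$, where $c_n := \int_\varOmega \ct_n\,\mathrm{dx}$ is the mean value and $w_n$ has zero mean. A one-dimensional Poincaré--Wirtinger inequality bounds $w_n$ in $L^\infty(\varOmega)$, hence in $L^2(\varOmega)$, by $\|\ct_n'\|_{\mathcal{M}(\varOmega)}$, so $(w_n)$ is uniformly bounded. By linearity of the control-to-state map and \eqref{eq:HHNstateH2}, $\st(w_n)$ is then uniformly bounded in $L^2(\varOmega)$, while the tracking term in $J(\ct_n)$ keeps $\st(\ct_n) = c_n\,\st(1) + \st(w_n)$ bounded in $L^2(\varOmega)$. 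Since $a \geq a_0 > 0$ and $d \geq 0$, the solution operator of \eqref{eq:HHNPDEmixedweak} is injective, so $\st(1) \neq 0$; this forces $|c_n|$ to be bounded, and together with the Poincaré bound we obtain $\sup_n \|\ct_n\|_{BV(\varOmega)} < \infty$.

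Next I would pass to the limit. By the compact embedding $BV(\varOmega) \hookrightarrow\hookrightarrow L^1(\varOmega)$ in one dimension, a subsequence satisfies $\ct_n \to \bar\ct$ in $L^1(\varOmega)$ and weakly-$*$ in $BV(\varOmega)$, with $\bar\ct \in BV(\varOmega)$; the uniform bound from $BV(\varOmega) \hookrightarrow L^\infty(\varOmega)$ then upgrades this to $\ct_n \to \bar\ct$ in $L^2(\varOmega)$. Continuity of the control-to-state map yields $\st(\ct_n) \to \st(\bar\ct)$ in $L^2(\varOmega)$, so the tracking term converges, while weak-$*$ lower semicontinuity of the total variation gives $\|\bar\ct'\|_{\mathcal{M}(\varOmega)} \leq \liminf_n \|\ct_n'\|_{\mathcal{M}(\varOmega)}$. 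Hence $J(\bar\ct) \leq \liminf_n J(\ct_n) = m$, and $\bar\ct$ is a minimizer.

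For uniqueness I would invoke convexity: the map $\ct \mapsto \st(\ct)$ is linear and injective, so $\ct \mapsto \tfrac12\|\st(\ct) - \st_{\operatorname{d}}\|_{L^2(\varOmega)}^2$ is strictly convex, and the seminorm is convex, making $J$ strictly convex; thus the minimizer is unique. Finally, since $\bar\ct \in BV(\varOmega) \hookrightarrow L^\infty(\varOmega) \subset L^2(\varOmega)$, the regularity estimate \eqref{eq:HHNstateH2} directly gives $\bar\st \in H^1_0(\varOmega) \cap H^2(\varOmega)$ together with $\bar\mix = a\,\bar\st' \in H^1(\varOmega)$, completing the statement. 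I expect the coercivity step of the second paragraph to be the only genuine obstacle, precisely because the BV-seminorm carries no information on the constant part and the injectivity of the state operator must be brought in to close the bound.
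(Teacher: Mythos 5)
Your proof is correct and follows essentially the same route as the paper: the paper defers this result to \cite[Theorem 2.2.]{hafemeyer2019}, whose argument (mirrored in the paper's own proof of the discrete analogue, Theorem~\ref{thm:HHNexistencedisc}) is exactly your scheme --- direct method with a minimizing sequence, BV-boundedness obtained by splitting off the mean value and controlling it via $\st(1) \neq 0$ and the tracking term, compactness of $BV(\varOmega) \hookrightarrow L^1(\varOmega)$ with weak-$*$ lower semicontinuity of the total variation, strict convexity from injectivity of the control-to-state map for uniqueness, and \eqref{eq:HHNstateH2} for the stated regularity.
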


\noindent Similar to \cite[Theorem 2.3.]{hafemeyer2019}, but adapted to the mixed formulation of the state equation, we provide the following optimality conditions.

\begin{theorem}\label{thm:HHNoptcond}
	The control $\bar \ct \in BV(\varOmega)$ with associated $(\bar \mix,\bar\st) \in H^1(\varOmega) \times H^1_0(\varOmega) \cap H^2(\varOmega)$ is optimal for the problem \eqref{eq:HHNproblem} if and only if there exists a unique pair $(\bar \mixad,\bar \ad) \in H^1(\varOmega) \times H^1_0(\varOmega)\cap H^2(\varOmega) $, such that $(\bar \ct, \bar \mix, \bar \st, \bar \mixad, \bar \ad)$ and the $H^3(\varOmega)$ function $\bar \Phi (x) \coloneqq \int_0^x \bar \ad (s) \, ds $ satisfy $\bar \Phi(1) = 0$ as well as
	\begin{alignat}{2}
		\int_{ \varOmega} \bar \Phi \, d \bar \ct ' &= \alpha || \bar \ct ' ||_{\mathcal{M}(\varOmega)}, && \label{eq:HHNopt1}\\
		|| \bar \Phi ||_{\mathcal{C}(\varOmega)} &\leq \alpha, && \label{eq:HHNopt2} \\
		{ \int_{ \varOmega} \left( \frac{1}{a} \bar \mix v + v' \bar \st \right) \, \textcolor{black}{\rm{dx}} }&=0 \qquad &&\forall v \in H^1(\varOmega), \label{eq:HHNopt3}\\
		{\int_{ \varOmega} \left(- \bar \mix' w + d \bar \st w \right) \, \textcolor{black}{\rm{dx}} }&= { \int_{ \varOmega} w \bar \ct \, \textcolor{black}{\rm{dx}}} \qquad &&\forall w \in L^2(\varOmega),\label{eq:HHNopt4}\\
		{ \int_{ \varOmega} \left( \frac{1}{a} \bar \mixad v + v' \bar \ad \right) \, \textcolor{black}{\rm{dx}}} &=0 \qquad &&\forall v \in H^1(\varOmega), \label{eq:HHNopt5}\\
		{ \int_{ \varOmega} \left( - \bar \mixad' w + d \bar \ad w \right) \,\textcolor{black}{\rm{dx}} }&= { \int_{ \varOmega} w \left( \bar \st - \st_{\operatorname{d}} \right)\, \textcolor{black}{\rm{dx}}} \qquad &&\forall w \in L^2(\varOmega), \label{eq:HHNopt6}\\
		-(\bar \ad, \ct - \bar \ct)_{L^2(\varOmega)} &\leq \alpha \left( ||\ct'||_{\mathcal{M}(\varOmega)} - ||\bar \ct'||_{\mathcal{M}(\varOmega)} \right) \qquad &&\forall \ct \in BV(\varOmega). \label{eq:HHNopt7}
	\end{alignat}
\end{theorem}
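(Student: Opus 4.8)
The plan is to use that \eqref{eq:HHNproblem} is a convex problem. Since the mixed state equation \eqref{eq:HHNPDEmixedweak} is linear, the control-to-state map $\ct\mapsto\st(\ct)$ is linear, so $\ct\mapsto\st(\ct)-\st_{\operatorname{d}}$ is affine and $f(\ct):=\tfrac12\|\st(\ct)-\st_{\operatorname{d}}\|_{L^2(\varOmega)}^2$ is convex and Fréchet differentiable, while $\ct\mapsto\alpha\|\ct'\|_{\mathcal{M}(\varOmega)}$ is convex but nonsmooth. For such a composite problem $\bar\ct$ is a global minimizer if and only if
\[
  f'(\bar\ct)(\ct-\bar\ct)+\alpha\|\ct'\|_{\mathcal{M}(\varOmega)}-\alpha\|\bar\ct'\|_{\mathcal{M}(\varOmega)}\ge 0\qquad\forall\,\ct\in BV(\varOmega).
\]
First I would represent $f'(\bar\ct)$ through the adjoint. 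Denoting by $(\delta\mix,\delta\st)$ the state sensitivity in a direction $h$, i.e. the solution of \eqref{eq:HHNPDEmixedweak} with right-hand side $h$, I would test the adjoint system \eqref{eq:HHNopt5}--\eqref{eq:HHNopt6} (which is \eqref{eq:HHNadjointequation} with $s=\bar\st-\st_{\operatorname{d}}$) against the sensitivity pair $(\delta\mix,\delta\st)$ and, conversely, test the sensitivity equations against $(\bar\mixad,\bar\ad)$. The mixed bilinear form evaluated at these arguments is identical on both sides, so the two computations give $(\bar\st-\st_{\operatorname{d}},\delta\st)_{L^2(\varOmega)}=(\bar\ad,h)_{L^2(\varOmega)}$, that is $f'(\bar\ct)h=(\bar\ad,h)_{L^2(\varOmega)}$. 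Substituting this turns the variational inequality into exactly \eqref{eq:HHNopt7}; together with the defining equations \eqref{eq:HHNopt3}--\eqref{eq:HHNopt6} this already is a complete system, and uniqueness of $(\bar\mixad,\bar\ad)$ follows from the well-posedness of \eqref{eq:HHNadjointequation}.

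Next I would convert \eqref{eq:HHNopt7} into \eqref{eq:HHNopt1}--\eqref{eq:HHNopt2} via $\bar\Phi$. Because $\bar\st-\st_{\operatorname{d}}\in L^\infty(\varOmega)\subset L^2(\varOmega)$, applying estimate \eqref{eq:HHNstateH2} to the adjoint equation gives $\bar\ad\in H^1_0(\varOmega)\cap H^2(\varOmega)$, so $\bar\Phi'=\bar\ad\in H^2(\varOmega)$ and hence $\bar\Phi\in H^3(\varOmega)$ with $\bar\Phi(0)=0$. Testing \eqref{eq:HHNopt7} with $\ct=\bar\ct+c$ for arbitrary $c\in\mathbb{R}$ leaves the seminorm unchanged and reduces it to $-c\,(\bar\ad,1)_{L^2(\varOmega)}\le 0$; since $c$ has arbitrary sign and $(\bar\ad,1)_{L^2(\varOmega)}=\bar\Phi(1)$, this forces $\bar\Phi(1)=0$. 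With $\bar\Phi(0)=\bar\Phi(1)=0$ the integration-by-parts formula against the smooth function $\bar\Phi$ has no boundary terms and gives $(\bar\ad,\ct)_{L^2(\varOmega)}=-\int_\varOmega\bar\Phi\,d\ct'$ for every $\ct\in BV(\varOmega)$. Inserting this into \eqref{eq:HHNopt7} rewrites it as
\[
  \int_\varOmega\bar\Phi\,d\ct'-\alpha\|\ct'\|_{\mathcal{M}(\varOmega)}\le\int_\varOmega\bar\Phi\,d\bar\ct'-\alpha\|\bar\ct'\|_{\mathcal{M}(\varOmega)}\qquad\forall\,\ct\in BV(\varOmega),
\]
so $\bar\ct$ maximizes the left-hand functional. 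Taking $\ct$ to be a step function whose derivative is a Dirac mass placed at an interior maximizer of $|\bar\Phi|$ and letting its height tend to infinity shows that this supremum is $+\infty$ unless $\|\bar\Phi\|_{\mathcal{C}(\varOmega)}\le\alpha$, which is \eqref{eq:HHNopt2}; the maximizer is interior because $\bar\Phi$ vanishes at both endpoints. Finally \eqref{eq:HHNopt2} together with the $\mathcal{M}(\varOmega)$--$\mathcal{C}(\varOmega)$ duality gives $\int_\varOmega\bar\Phi\,d\bar\ct'\le\alpha\|\bar\ct'\|_{\mathcal{M}(\varOmega)}$, while evaluating the maximization property at $\ct=0$ gives the reverse inequality, so that the equality \eqref{eq:HHNopt1} follows.

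For the converse direction I would reverse these steps: assuming \eqref{eq:HHNopt1}--\eqref{eq:HHNopt6} and $\bar\Phi(1)=0$, the bound \eqref{eq:HHNopt2} and duality give $\int_\varOmega\bar\Phi\,d\ct'\le\alpha\|\ct'\|_{\mathcal{M}(\varOmega)}$ for all $\ct$, subtracting \eqref{eq:HHNopt1} and undoing the integration by parts recovers \eqref{eq:HHNopt7}, and convexity of the reduced functional upgrades this variational inequality to global optimality of $\bar\ct$. The step I expect to demand the most care is the integration-by-parts identity $(\bar\ad,\ct)_{L^2(\varOmega)}=-\int_\varOmega\bar\Phi\,d\ct'$: one has to fix the trace conventions for $\ct\in BV(\varOmega)$ at $x=0$ and $x=1$ and verify that $\bar\Phi(0)=\bar\Phi(1)=0$ genuinely annihilates all boundary contributions, since this identity is the hinge that converts the functional-analytic variational inequality \eqref{eq:HHNopt7} into the measure-theoretic sparsity relations \eqref{eq:HHNopt1}--\eqref{eq:HHNopt2}.
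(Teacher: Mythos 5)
Your proposal is correct and follows essentially the same approach as the paper: the paper itself defers this continuous proof to \cite[Theorem 2.3.]{hafemeyer2019}, but its proof of the discrete counterpart (Theorem~\ref{thm:HHNoptconddisc}) has exactly your skeleton --- convex first-order optimality, adjoint representation of the tracking-term gradient giving \eqref{eq:HHNopt7} together with \eqref{eq:HHNopt3}--\eqref{eq:HHNopt6}, then extraction of $\bar \Phi(1)=0$, \eqref{eq:HHNopt2} and \eqref{eq:HHNopt1} from the variational inequality via suitable test functions and integration by parts. The only cosmetic differences are that the paper obtains \eqref{eq:HHNopt2} by testing with indicator functions $1_{(0,x)}$, which yields $|\bar \Phi(x)| = |(\bar \ad, 1_{(0,x)})_{L^2(\varOmega)}| \leq \alpha$ directly instead of your Dirac-mass blow-up argument, and obtains \eqref{eq:HHNopt1} from the equality produced by inserting $\ct = 2\bar\ct$ and $\ct = 0$ rather than from duality plus evaluation at $\ct = 0$.
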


{We note that here $(\bar \mixad,\bar \ad)=( \textcolor{black}{\mixad}(\bar \st - \st_{\operatorname{d}}), \textcolor{black}{\ad}(\bar \st - \st_{\operatorname{d}}))$}.

%
\noindent The problem inherits a sparsity structure, where the structure delivers information about the support of $\bar \ct '$, not about the support of the optimal control itself. The support of $\bar \ct '$ indicates the location of the jumping points of the optimal control $\bar \ct \in BV(\varOmega)$.
For the convenience of the reader we recall \cite[Corollary 1]{hafemeyer2019}:

\begin{lemma}\label{lem:HHNsparsity}
	If $\bar \ct$ is optimal for \eqref{eq:HHNproblem}, then there hold
	\begin{align}
		\supp ( (\bar \ct ')^+) &\subset \left\{ x \in \varOmega : \bar \Phi (x) = \alpha \right\}, \label{eq:HHNsparse1} \\
		\supp ( (\bar \ct ')^-) &\subset \left\{ x \in \varOmega : \bar \Phi (x) = - \alpha \right\}, \label{eq:HHNsparse2}
	\end{align}
	where $\bar \ct ' = (\bar \ct ')^+ - (\bar \ct ')^- $ is the Jordan decomposition. Moreover, we have
	\begin{equation}\label{eq:HHNsparse3}
		\supp (\bar \ct ') \subset \left\{ x \in \varOmega : |\bar \Phi (x) | = \alpha \right\} \subset \left\{ x \in \varOmega : \bar \ad (x) = 0 \right\}.
	\end{equation}
\end{lemma}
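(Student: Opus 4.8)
The plan is to read off the sparsity structure directly from the two scalar optimality relations \eqref{eq:HHNopt1} and \eqref{eq:HHNopt2}, treating all remaining data of Theorem~\ref{thm:HHNoptcond} as given. First I would insert the Jordan decomposition $\bar \ct ' = (\bar \ct ')^+ - (\bar \ct ')^-$ into \eqref{eq:HHNopt1}, using $\| \bar \ct ' \|_{\mathcal{M}(\varOmega)} = (\bar \ct ')^+(\varOmega) + (\bar \ct ')^-(\varOmega)$. Splitting the left-hand side accordingly and collecting the $\alpha$-terms, the identity \eqref{eq:HHNopt1} rearranges to
\begin{equation*}
	\int_{\varOmega} (\alpha - \bar \Phi) \, d(\bar \ct ')^+ + \int_{\varOmega} (\alpha + \bar \Phi) \, d(\bar \ct ')^- = 0 .
\end{equation*}

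Next I would exploit the sign structure. The bound \eqref{eq:HHNopt2} means $-\alpha \le \bar \Phi(x) \le \alpha$ for all $x \in \varOmega$, so both integrands $\alpha - \bar \Phi \ge 0$ and $\alpha + \bar \Phi \ge 0$ are nonnegative, while $(\bar \ct ')^{\pm}$ are nonnegative measures. Hence each of the two integrals is itself nonnegative, and since their sum vanishes, each must vanish separately. Because $\bar \Phi \in H^3(\varOmega) \hookrightarrow \mathcal{C}(\varOmega)$ is continuous, the set $\{x \in \varOmega : \bar \Phi(x) < \alpha\}$ is open, and on it the integrand $\alpha - \bar \Phi$ is strictly positive; vanishing of $\int_{\varOmega} (\alpha - \bar \Phi)\, d(\bar \ct ')^+$ therefore forces $(\bar \ct ')^+$ to place no mass on this open set, so $\supp((\bar \ct ')^+) \subset \{x : \bar \Phi(x) = \alpha\}$, which is \eqref{eq:HHNsparse1}. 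The identical argument applied to $\{x : \bar \Phi(x) > -\alpha\}$ and $(\bar \ct ')^-$ yields \eqref{eq:HHNsparse2}.

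For \eqref{eq:HHNsparse3} I would first combine the two support statements: since $\supp(\bar \ct ') = \supp((\bar \ct ')^+) \cup \supp((\bar \ct ')^-)$, the inclusions just proved give the first containment $\supp(\bar \ct ') \subset \{x : |\bar \Phi(x)| = \alpha\}$. For the second containment I use $\bar \Phi' = \bar \ad$ together with the boundary values $\bar \Phi(0) = \int_0^0 \bar \ad\,ds = 0$ and $\bar \Phi(1) = 0$. Since $\alpha > 0$, any $x_0$ with $|\bar \Phi(x_0)| = \alpha$ must lie in the interior $(0,1)$, and by \eqref{eq:HHNopt2} it is a global extremum of $\bar \Phi$ over $\bar\varOmega$ (a maximum if $\bar \Phi(x_0)=\alpha$, a minimum if $\bar \Phi(x_0)=-\alpha$). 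As $\bar \Phi$ is continuously differentiable, Fermat's rule gives $\bar \Phi'(x_0) = \bar \ad(x_0) = 0$, establishing $\{x : |\bar \Phi(x)| = \alpha\} \subset \{x : \bar \ad(x) = 0\}$.

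I expect the only delicate point to be the passage from ``the integral of a nonnegative integrand against a nonnegative measure is zero'' to the support inclusion; the clean way to handle it is via the definition of $\supp$ as the complement of the largest open null set, which is exactly why the continuity of $\bar \Phi$ (through the $H^3 \hookrightarrow \mathcal{C}$ embedding in one dimension) is invoked to make $\{\bar \Phi < \alpha\}$ open. Everything else is sign bookkeeping plus the one-line extremum argument, so no further regularity or compactness input should be required.
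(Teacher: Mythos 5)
Your proof is correct. The paper itself gives no proof of this lemma---it simply recalls it from \cite[Corollary 1]{hafemeyer2019}---so there is no in-paper argument to compare against; however, your derivation (splitting \eqref{eq:HHNopt1} via the Jordan decomposition, using \eqref{eq:HHNopt2} to make both integrands nonnegative so that each integral must vanish separately, passing to the support inclusions through the openness of $\{x : \bar \Phi(x) < \alpha\}$ and $\{x : \bar \Phi(x) > -\alpha\}$, and finally the interior-extremum argument with $\bar \Phi' = \bar \ad$ and $\bar \Phi(0) = \bar \Phi(1) = 0$ for the second inclusion in \eqref{eq:HHNsparse3}) is precisely the standard extremality argument behind the cited result, so your approach matches the intended proof.
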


\section{Variational discretization} \label{sec:HHNVD}

Our aim is to introduce a piecewise constant control approximation, which is fully aligned with the discretization of our state equation. We achieve this by employing variational discretization for our optimal control problem \eqref{eq:HHNproblem} combined with the classical lowest order Raviart-Thomas discretization of the mixed form of the state equation \eqref{eq:HHNPDEmixedweak}. Along with this discrete approach come the facts that our discrete counterpart of \eqref{eq:HHNproblem} still remains infinite-dimensional and that the optimality conditions \eqref{eq:HHNopt3}-\eqref{eq:HHNopt6} remain valid with continuous variables replaced by their discrete analogues.
Then the piecewise constant discretization of the adjoint state $\ad$ in combination with the optimality conditions for the variational discrete problem induce the piecewise constant structure of the control {$\ct_{\operatorname{vd}}$} through the fact that under a natural structural assumption {$\ct_{\operatorname{vd}}'$} is a sum of Dirac measures. This then immediately delivers that the variational discrete control {$\ct_{\operatorname{vd}}$} is piecewise constant.

Let $0=x_0 < x_1 < \ldots < x_N = 1$ be a partition of $\bar \varOmega=[0,1]$. Then for $i=1,\ldots,N$ we define the subintervals $I_i := (x_{i-1},x_i)$ of size $h_i:=x_i - x_{i-1}$ and set $h:= \max_{1 \leq i \leq N} h_i$. Let $\chi_i$ for $i=1,\ldots,N$ be the indicator function of interval $I_i$, i.e.
\begin{equation*}
	\chi_i(x) = \begin{cases}
		1, x\in I_i, \\
		0, \textrm{else}.
	\end{cases}
\end{equation*}
Let $e_j$ for $j=0,\ldots,N$ denote the hat functions, i.e. those functions which are piecewise linear and continuous on the partition, satisfying $e_j(x_i) = \delta_{ij}$ for $i,j = 0, \ldots, N$. 

We introduce the discrete spaces
\begin{equation*}
	P_0 := \Span \left\{ \chi_i : 1 \leq i \leq N \right\}, \qquad 
	P_1 := \Span \left\{ e_j : 0 \leq j \leq N \right\}.
\end{equation*}

\noindent Using these spaces we get the discrete formulation of \eqref{eq:HHNPDEmixedweak}: Find $\st_h = \sum_{i=1}^N \st_i \chi_i \in P_0$, and $\mix_h=\sum_{j=0}^N v_j e_j \in P_1$, such that
{
\begin{subequations}\label{eq:HHNPDEmixedweakdiscrete}
	\begin{alignat}{2}
		\int_{\varOmega} \left( \frac{1}{a} \mix_h v_h + v_h' \st_h \right) \, \textcolor{black}{\rm{dx}}   &= 0 \qquad &&\forall v_h \in P_1,\\
		\int_{\varOmega} \left( - \mix_h' w_h + d \st_h w_h \right) \, \textcolor{black}{\rm{dx}}  &= \int_{\varOmega} w_h \ct  \, \textcolor{black}{\rm{dx}} \qquad &&\forall w_h \in P_0 .
	\end{alignat}
\end{subequations} }

\noindent {For given $\ct$ we write $(\mix_h,\st_h) = (\mix_h(\ct),\st_h(\ct) )$} for the unique solution of the discrete state equation.

\noindent In the present case where $\varOmega \subset \mathds{R}$, the space of Raviart-Thomas elements of lowest order (see e.g. \cite{bahriawaticarstensen,RaviartThomas}) coincides with the chosen \textcolor{black}{pair} $(P_1,P_0)$. Furthermore, we stress that the control space remains $BV(\varOmega)$, so the variational discrete control {$\ct_{\operatorname{vd}}$} is not discretized. 
%
%
\noindent The variational discrete counterpart of \eqref{eq:HHNproblem} then reads
{
\begin{equation}
	\min_{\ct_{\operatorname{vd}} \in BV(\varOmega)} J_h(\ct_{\operatorname{vd}}) \coloneqq \tfrac{1}{2} \| \st_h({\ct_{\operatorname{vd}}}) - \st_{\operatorname{d}} \|_{L^2(\varOmega)}^2 + \alpha \, \| \ct_{\operatorname{vd}}' \|_{\mathcal{M}(\varOmega)}.
	\tag{$P_{\operatorname{vd}}$}
	\label{eq:HHNproblemdiscrete}
\end{equation}
}

\noindent As in \cite[Definition 3.9., Lemma 3.10.]{hafemeyer2019} we define the following projection operator $\Upsilon_h$ and collect its properties.

\begin{lemma}
	For $i=1,\ldots,N$ let the operator $\Upsilon_h$ be defined as:
	\begin{equation*}
	\Upsilon_h : BV(\varOmega) \rightarrow P_0, \qquad \Upsilon_h \ct _{| I_i} \coloneqq \tfrac{1}{h_i} \int_{I_i} \ct (s) \, d s.
	\end{equation*}
	For any $\ct \in BV(\varOmega)$ and $w_h \in P_0$ it holds
	\begin{align}
	(\ct,w_h)_{L^2(\varOmega)} &= (\Upsilon_h \ct, w_h)_{L^2(\varOmega)}, \label{eq:HHNUpsilon1}\\
	|| \ct - \Upsilon_h \ct ||_{L^1(\varOmega)} &\leq h || \ct ' ||_{\mathcal{M}(\varOmega)},\label{eq:HHNUpsilon2}\\
	|| (\Upsilon_h \ct)' ||_{\mathcal{M}(\varOmega)} &\leq || \ct ' ||_{\mathcal{M}(\varOmega)},\label{eq:HHNUpsilon3}\\
	|| \ct - \Upsilon_h \ct ||_{L^{\infty}(\varOmega)} &\leq h || \ct ' ||_{L^{\infty}(\varOmega)},\qquad\qquad \text{provided that } \ct \in W^{1,\infty}(\varOmega).\label{eq:HHNUpsilon4}
	\end{align}
\end{lemma}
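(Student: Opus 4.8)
The plan is to establish the four estimates separately, in increasing order of difficulty. The first three, \eqref{eq:HHNUpsilon1}, \eqref{eq:HHNUpsilon2} and \eqref{eq:HHNUpsilon4}, are essentially direct consequences of the averaging definition of $\Upsilon_h$, whereas the stability bound \eqref{eq:HHNUpsilon3} is where the real work lies.

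For \eqref{eq:HHNUpsilon1} I would simply note that $\Upsilon_h$ is the $L^2(\varOmega)$-orthogonal projection onto $P_0$: writing $w_h = \sum_{i=1}^N c_i \chi_i$, the constant $c_i$ factors out of the integral over $I_i$, and $\int_{I_i}\Upsilon_h\ct\,ds = \int_{I_i}\ct\,ds$ holds by the very definition of $\Upsilon_h\ct_{|I_i}$; summing over $i$ gives the identity. In particular this shows that $\Upsilon_h$ is self-adjoint on $L^2(\varOmega)$, a fact I reuse below.

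For the $L^1$ bound \eqref{eq:HHNUpsilon2} the key pointwise observation is that $\Upsilon_h\ct_{|I_i}$ is an average of $\ct$ and hence lies between its essential infimum and supremum on $I_i$, so that $|\ct(x) - \Upsilon_h\ct(x)| \le \operatorname{osc}_{I_i}\ct \le |\ct'|(I_i)$ for a.e.\ $x \in I_i$, the last step being the one-dimensional fact that the oscillation of a $BV$ function on an interval is controlled by its total variation there. Integration over $I_i$ gives $\int_{I_i}|\ct - \Upsilon_h\ct|\,dx \le h_i\,|\ct'|(I_i)$, and summing over the pairwise disjoint $I_i$ yields $\|\ct - \Upsilon_h\ct\|_{L^1(\varOmega)} \le h\sum_{i=1}^N |\ct'|(I_i) \le h\,\|\ct'\|_{\mathcal{M}(\varOmega)}$. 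Estimate \eqref{eq:HHNUpsilon4} is the same computation with the sharper Lipschitz bound available for $\ct \in W^{1,\infty}(\varOmega)$: from $\ct(x) - \Upsilon_h\ct(x) = \tfrac{1}{h_i}\int_{I_i}(\ct(x) - \ct(s))\,ds$ and $|\ct(x) - \ct(s)| \le \|\ct'\|_{L^{\infty}(\varOmega)}\,|x - s| \le h_i\,\|\ct'\|_{L^{\infty}(\varOmega)}$ for $x,s \in I_i$ one obtains the uniform bound $h_i\,\|\ct'\|_{L^{\infty}(\varOmega)} \le h\,\|\ct'\|_{L^{\infty}(\varOmega)}$.

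The stability estimate \eqref{eq:HHNUpsilon3} is the main obstacle, since the obvious bound is off by a factor of two. As $\Upsilon_h\ct$ is piecewise constant with values $m_i := \tfrac{1}{h_i}\int_{I_i}\ct\,ds$, its distributional derivative on $\varOmega = (0,1)$ is $\sum_{i=1}^{N-1}(m_{i+1}-m_i)\,\delta_{x_i}$, so that $\|(\Upsilon_h\ct)'\|_{\mathcal{M}(\varOmega)} = \sum_{i=1}^{N-1}|m_{i+1}-m_i|$; bounding each term separately by the variation of $\ct$ over $I_i \cup I_{i+1}$ double-counts every interval and only gives the constant two. To recover the sharp constant one I would argue by duality. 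Setting $\varepsilon_i := \sign(m_{i+1}-m_i)$ with the convention $\varepsilon_0 = \varepsilon_N = 0$ and summing by parts, I can rewrite $\sum_{i=1}^{N-1}|m_{i+1}-m_i| = \int_\varOmega \ct\,\phi'\,dx$, where $\phi$ is the continuous, piecewise linear function determined by the nodal values $\phi(x_k) = -\varepsilon_k$; the telescoping identity $\sum_i(\varepsilon_{i-1}-\varepsilon_i) = 0$ guarantees $\phi(x_0) = \phi(x_N) = 0$, and since all nodal values lie in $\{-1,0,1\}$ we have $\|\phi\|_{\mathcal{C}(\varOmega)} \le 1$. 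As $\phi$ is Lipschitz and vanishes at both endpoints, integration by parts for $BV$ functions gives $\int_\varOmega \ct\,\phi'\,dx = -\int_\varOmega \phi\,d\ct' \le \|\phi\|_{\mathcal{C}(\varOmega)}\,\|\ct'\|_{\mathcal{M}(\varOmega)} \le \|\ct'\|_{\mathcal{M}(\varOmega)}$, which is exactly \eqref{eq:HHNUpsilon3}. Equivalently, one may invoke the self-adjointness of $\Upsilon_h$ together with the identity $\Upsilon_h(\psi') = (\Pi_h\psi)'$ for the piecewise linear interpolant $\Pi_h$ and test the total-variation supremum against $\psi \in C_c^1(\varOmega)$; in either formulation the crux is to exhibit an admissible test function of sup-norm one, which is precisely what secures the optimal constant.
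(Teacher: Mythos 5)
Your proof is correct; the main thing to say is how it relates to the paper. The paper does not actually prove this lemma — it just collects the statements from \cite[Proposition 16]{CasasKruseKunisch} and \cite[Lemma 3.10.]{hafemeyer2019} — so your writeup supplies a self-contained argument that the paper leaves to the literature. Your treatment of \eqref{eq:HHNUpsilon1}, \eqref{eq:HHNUpsilon2} and \eqref{eq:HHNUpsilon4} is the standard one (orthogonal-projection identity; essential oscillation of a one-dimensional $BV$ function bounded by its total variation on the interval; Lipschitz bound), and all three are fine. The substantive content is \eqref{eq:HHNUpsilon3}: you correctly diagnose that bounding each jump $|m_{i+1}-m_i|$ by the variation of $\ct$ over $I_i \cup I_{i+1}$ double-counts intervals and only yields the constant $2$, and your summation-by-parts/duality device — writing $\sum_{i=1}^{N-1}|m_{i+1}-m_i| = \int_\varOmega \ct\,\phi'\,dx$ with $\phi$ continuous, piecewise linear, nodal values $-\varepsilon_k \in \{-1,0,1\}$ and $\phi(0)=\phi(1)=0$, then integrating by parts against the measure $\ct'$ — is exactly the mechanism that secures the sharp constant $1$, and is in the spirit of the arguments in the cited works. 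Two small blemishes, neither fatal: the vanishing of $\phi$ at the endpoints follows directly from the convention $\varepsilon_0=\varepsilon_N=0$, not from the telescoping identity you invoke; and your closing ``equivalently'' remark uses the symbol $\Pi_h$ for the piecewise linear nodal interpolant, which clashes with the paper's later use of $\Pi_h$ for the Fortin projection, so it should be renamed if this text were merged into the paper.
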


The proof can be collected from \cite[Proposition 16]{CasasKruseKunisch} and \cite[Lemma 3.10.]{hafemeyer2019}.

%
We next give the discrete counterpart of Theorem~\ref{thm:HHNuniquesolution}.

\begin{theorem}\label{thm:HHNexistencedisc}
	
	Problem \eqref{eq:HHNproblemdiscrete} admits an optimal solution ${\bar \ct_{\operatorname{vd}} } \in BV(\varOmega)$ and associated unique $(\bar \mix_h,\bar\st_h) \in P_1 \times P_0$. For every solution ${\bar \ct_{\operatorname{vd}} }$, $\Upsilon_h {\bar \ct_{\operatorname{vd}} }$ also solves \eqref{eq:HHNproblemdiscrete} and $\Upsilon_h { \bar \ct_{\operatorname{vd}} } \in P_0$ is unique.
	Furthermore, there exist $C, \textcolor{black}{h_0 >0}$, such that for all $h \in (0,h_0]$ we have 
	\begin{equation}
		|| { \bar \ct_{\operatorname{vd}} } ||_{BV(\varOmega)} \leq C \label{eq:HHNcontrolbounddisc}
	\end{equation}
	for any optimal control $ {\bar \ct_{\operatorname{vd}} }$. 
\end{theorem}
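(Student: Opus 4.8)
The plan is to exploit that the discrete state sees the control only through its cell averages. By \eqref{eq:HHNUpsilon1} the right-hand side of the discrete state equation satisfies $(\ct,w_h)_{L^2(\varOmega)} = (\Upsilon_h\ct,w_h)_{L^2(\varOmega)}$ for all $w_h\in P_0$, hence $\st_h(\ct)=\st_h(\Upsilon_h\ct)$ for every $\ct\in BV(\varOmega)$. Combining this with the contraction property \eqref{eq:HHNUpsilon3} gives $J_h(\Upsilon_h\ct)\le J_h(\ct)$, so that $\inf_{BV(\varOmega)}J_h=\inf_{P_0}J_h$ and it suffices to minimize over the finite-dimensional space $P_0$. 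On $P_0$ I would prove coercivity by a normalization argument: if $\|\ct_h^n\|_{L^2(\varOmega)}\to\infty$ along a minimizing sequence while $J_h$ stays bounded, then $w_n:=\ct_h^n/\|\ct_h^n\|_{L^2(\varOmega)}$ has $\|w_n'\|_{\mathcal{M}(\varOmega)}\to 0$ and, along a subsequence, converges in $P_0$ to a nonzero constant; boundedness of the tracking term forces $\st_h(w_n)\to 0$, while continuity and injectivity of $\ct_h\mapsto\st_h(\ct_h)$ on $P_0$ give a nonzero limit, a contradiction. Injectivity follows by testing: $\st_h=0$ turns the first equation into $a(\mix_h,v_h)=0$ for all $v_h\in P_1$, and since $a^{-1}>0$ makes $a(\cdot,\cdot)$ an inner product on $P_1$ we get $\mix_h=0$, whence the second equation yields $\ct_h=0$. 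Continuity of $J_h$ on $P_0$ together with coercivity then produces a minimizer via the direct method.

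For the second assertion, if $\bar\ct_{\operatorname{vd}}$ is any minimizer then $J_h(\Upsilon_h\bar\ct_{\operatorname{vd}})\le J_h(\bar\ct_{\operatorname{vd}})$ shows that $\Upsilon_h\bar\ct_{\operatorname{vd}}\in P_0$ is again optimal. Uniqueness of the optimal element in $P_0$ follows from strict convexity of $J_h|_{P_0}$: injectivity of the state map makes $\ct_h\mapsto\tfrac12\|\st_h(\ct_h)-\st_{\operatorname{d}}\|_{L^2(\varOmega)}^2$ strictly convex, and adding the convex seminorm preserves this. Since the state is determined by $\Upsilon_h\ct$ alone, which is the same unique element for every minimizer, the associated pair $(\bar\mix_h,\bar\st_h)$ is unique.

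For the uniform bound I would use the continuous optimal control $\bar\ct$ from Theorem~\ref{thm:HHNuniquesolution} as a competitor: optimality of $\bar\ct_{\operatorname{vd}}$ gives $J_h(\bar\ct_{\operatorname{vd}})\le J_h(\bar\ct)=\tfrac12\|\st_h(\bar\ct)-\st_{\operatorname{d}}\|_{L^2(\varOmega)}^2+\alpha\|\bar\ct'\|_{\mathcal{M}(\varOmega)}$. A uniform-in-$h$ approximation estimate $\st_h(\bar\ct)\to\st(\bar\ct)$ in $L^2(\varOmega)$, obtained from the Raviart--Thomas error analysis together with the regularity \eqref{eq:HHNstateH2}, bounds the right-hand side independently of $h$ for $h\le h_0$. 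This yields $\alpha\|\bar\ct_{\operatorname{vd}}'\|_{\mathcal{M}(\varOmega)}\le C$ and $\|\st_h(\bar\ct_{\operatorname{vd}})\|_{L^2(\varOmega)}\le C$. To control the $L^1$ part I would split $\bar\ct_{\operatorname{vd}}=c+w$ into its mean $c$ and zero-mean remainder $w$; on $\varOmega=(0,1)$ one has $\|w\|_{L^\infty(\varOmega)}\le\|w'\|_{\mathcal{M}(\varOmega)}=\|\bar\ct_{\operatorname{vd}}'\|_{\mathcal{M}(\varOmega)}\le C$, and the uniform discrete stability $\|\st_h(w)\|_{L^2(\varOmega)}\le C\|w\|_{L^2(\varOmega)}$ together with a uniform lower bound $\|\st_h(\mathds{1})\|_{L^2(\varOmega)}\ge c_0>0$ (valid for small $h$ since $\st_h(\mathds{1})\to\st(\mathds{1})\neq 0$) gives $|c|\le c_0^{-1}\bigl(\|\st_h(\bar\ct_{\operatorname{vd}})\|_{L^2(\varOmega)}+\|\st_h(w)\|_{L^2(\varOmega)}\bigr)\le C$. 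Hence $\|\bar\ct_{\operatorname{vd}}\|_{L^1(\varOmega)}\le|c|+\|w\|_{L^1(\varOmega)}\le C$, and adding the seminorm bound gives \eqref{eq:HHNcontrolbounddisc}.

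The main obstacle is the uniform-in-$h$ control in this last step: one must guarantee that the discrete inf-sup (LBB) constant of the pair $(P_1,P_0)$ stays bounded away from zero and that $\st_h(\bar\ct)\to\st(\bar\ct)$, so that both the competitor estimate and the lower bound $\|\st_h(\mathds{1})\|_{L^2(\varOmega)}\ge c_0$ are genuinely $h$-independent. The convexity and direct-method arguments of the first two parts are routine once the reduction $\st_h(\ct)=\st_h(\Upsilon_h\ct)$ and injectivity on $P_0$ are established.
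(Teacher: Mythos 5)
Your proof is correct, and for the hardest part --- the uniform $BV$ bound --- it follows essentially the same route as the paper: bound the seminorm by comparison with a fixed competitor, split the control into its mean and a zero-mean remainder, and control the mean via the discrete state of the constant control together with the nondegeneracy $\|\st_h(1)\|_{L^2(\varOmega)} \ge \tfrac12 \|\st(1)\|_{L^2(\varOmega)}$ for $h$ small. (The paper takes $\ct=0$ as competitor, so that $J_h(0)=\tfrac12\|\st_{\operatorname{d}}\|_{L^2(\varOmega)}^2$ requires no finite element estimate, and bounds the remainder by the Poincar\'e-type inequality from Ambrosio--Fusco--Pallara rather than your pointwise bound $\|w\|_{L^\infty(\varOmega)}\le\|w'\|_{\mathcal{M}(\varOmega)}$; these are cosmetic differences.) Where you genuinely deviate is existence: the paper reuses the direct method in $BV(\varOmega)$ from the continuous problem (Theorem~\ref{thm:HHNuniquesolution}) verbatim, with $\st(1)$ replaced by $\st_h(1)$, whereas you reduce to the finite-dimensional space $P_0$ via $\st_h(\ct)=\st_h(\Upsilon_h\ct)$ and $J_h(\Upsilon_h\ct)\le J_h(\ct)$, and then run a normalization--compactness argument there. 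This is more self-contained --- it avoids weak-$*$ compactness in $BV(\varOmega)$ --- and you actually prove the injectivity of $\ct_h\mapsto\st_h(\ct_h)$ on $P_0$, which the paper only asserts as ``easy to see''; the price is that your existence argument only produces a minimizer in $P_0$, which is harmless since $\inf_{BV(\varOmega)}J_h=\inf_{P_0}J_h$. Finally, the ``main obstacle'' you flag --- uniformity in $h$ of the stability bound $\|\st_h(w)\|_{L^2(\varOmega)}\le C\|w\|_{L^2(\varOmega)}$ and of the lower bound on $\|\st_h(1)\|_{L^2(\varOmega)}$ --- is not actually an obstacle given the paper's tools: both follow from \eqref{eq:HHND} in Lemma~\ref{lem:HHNLemmaXX} combined with the regularity estimate \eqref{eq:HHNstateH2}, which is precisely how the paper closes the same step in its own proof.
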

{ \begin{proof}
Since the control {$\ct_{\operatorname{vd}}$} remains continuous, \textcolor{black}{i.e. is not discretized}, and $|\st_h(1)| \ge \frac{1}{2} |\st(1)|$ by \textcolor{black}{Lemma~\ref{lem:HHNLemmaXX}} for $h$ small enough, we can use the proof for existence of solutions from Theorem~\ref{thm:HHNuniquesolution} verbatim, with $\st(1)$ replaced by $\st_h(1)$. \\
%
We stress that uniqueness of the control is not given in this setting, since the control is not discretized, so the control-to-state operator is in general not injective. However by definition of $\Upsilon_h$
\textcolor{black}{ we know that $\Upsilon_h {\bar \ct_{\operatorname{vd}} }$ is also a solution, since $J_h(\Upsilon_h \bar \ct_{\operatorname{vd}}) \le J_h (\bar \ct_{\operatorname{vd}})$}.  
 It is easy to see that {the restriction of the mapping $\ct_{\operatorname{vd}} \mapsto y_h$ to $P_0$ is injective}, so that the quadratic term in $J_h$ now delivers strict convexity of $J_h$ on $P_0$. Therefore, uniqueness of solution in the discrete space $P_0$ is evident. Due to uniqueness of the discrete solution, all projections of solutions ${\bar \ct_{\operatorname{vd}} } \in BV(\varOmega)$ must coincide. \\
For the proof of the boundedness in the $BV$-norm we may proceed along the lines of the respective proof in \cite[Theorem 3.5.]{hafemeyer2019}. First, we recall $|| \bar \ct_{\operatorname{vd}}||_{BV(\varOmega)} = || \bar \ct_{\operatorname{vd}}||_{L^1(\varOmega)}  + || \bar \ct'_{\operatorname{vd}}||_{\mathcal{M}(\varOmega)} $. Since, by optimality of $\bar \ct_{\operatorname{vd}}$ it holds that $J_h(\bar \ct_{\operatorname{vd}} ) \le J_h(0)$, we have 
\begin{equation*}
	 || \bar \ct'_{\operatorname{vd}}||_{\mathcal{M}(\varOmega)} \le \frac{J_h(0)}{\alpha} = \frac{ || \st_h(0) - \st_{\operatorname{d}} ||^2_{L^2(\varOmega)} }{2 \alpha} = \frac{ || \st_{\operatorname{d}} ||^2_{L^2(\varOmega)} }{2 \alpha} .
\end{equation*}
Introducing $\hat \ct := \frac{1}{| \varOmega|} \int_{ \varOmega} \bar \ct_{\operatorname{vd}} \, \textcolor{black}{\rm{dx}}$, we from \cite[Remark 3.50]{AmbrosioFuscoPallara} have
\begin{equation*}
	|| \bar \ct_{\operatorname{vd}} - \hat \ct ||_{L^2(\varOmega)} \le C  || \bar \ct'_{\operatorname{vd}}||_{\mathcal{M}(\varOmega)} ,
\end{equation*}
with $C$ independent of $h$. This implies, exploiting $|\varOmega| = 1$
\begin{equation*}
	|| \bar \ct_{\operatorname{vd}} ||_{L^1(\varOmega)} \le || \bar \ct_{\operatorname{vd}} ||_{L^2(\varOmega)} \le C \frac{ || \st_{\operatorname{d}} ||^2_{L^2(\varOmega)} }{2 \alpha} + |\hat \ct|.
\end{equation*}
To show the boundedness of $\bar \ct_{\operatorname{vd}}$ in $BV(\varOmega)$ it remains to show the boundedness of $|\hat \ct|$. Since the mapping $\ct \mapsto \st_h(\ct)$ is linear \textcolor{black}{and $\hat \ct \in \mathds{R}$}, we have $\st_h(\hat \ct) =\hat \ct \st_h(1)$, and thus
\begin{align*}
	| \hat \ct | \, || \st_h(1)||_{L^2(\varOmega)} &= || \st_h(\hat \ct)||_{L^2(\varOmega)} \\
	&\le  || \st_h(\hat \ct - \bar \ct_{\operatorname{vd}})||_{L^2(\varOmega)} 
	+ \underbrace{ || \st_h(\bar \ct_{\operatorname{vd}}) - \st_{\operatorname{d}}||_{L^2(\varOmega)} }_{\le \sqrt{2 J_h(0)} \, = \, || \st_{\operatorname{d}}||_{L^2(\varOmega)}}
	+  || \st_{\operatorname{d}}||_{L^2(\varOmega)} \\
	&\le || \st_h(\hat \ct - \bar \ct_{\operatorname{vd}})- \st(\hat \ct - \bar \ct_{\operatorname{vd}})||_{L^2(\varOmega)} + || \st(\hat \ct - \bar \ct_{\operatorname{vd}})||_{L^2(\varOmega)} + 2 || \st_{\operatorname{d}}||_{L^2(\varOmega)}.
\end{align*}

\textcolor{black}{
We employ \eqref{eq:HHND} to obtain
\begin{align*}
	|| \st_h(\hat \ct - \bar \ct_{\operatorname{vd}})- \st(\hat \ct - \bar \ct_{\operatorname{vd}})||_{L^2(\varOmega)}  &\le C h \left( || \hat \ct - \bar \ct_{\operatorname{vd}} ||_{L^2(\varOmega)} +  || \st(\hat \ct - \bar \ct_{\operatorname{vd}})||_{H^2(\varOmega)}  \right).
\end{align*} 
}
Altogether, we have, using the continuity of $u \mapsto y(u) \in H^1_0(\varOmega) \cap H^2(\varOmega)$:
\begin{align*}
	| \hat \ct | \, || \st_h(1)||_{L^2(\varOmega)} &\le \textcolor{black}{ C h \, || \hat \ct - \bar \ct_{\operatorname{vd}} ||_{L^2(\varOmega)} + C h \, || \st(\hat \ct - \bar \ct_{\operatorname{vd}})||_{H^2(\varOmega)}  + || \st(\hat \ct - \bar \ct_{\operatorname{vd}})||_{L^2(\varOmega)} + 2 || \st_{\operatorname{d}}||_{L^2(\varOmega)} 
	}\\
	&\le C || \hat \ct - \bar \ct_{\operatorname{vd}} ||_{L^2(\varOmega)}  +2 || \st_{\operatorname{d}}||_{L^2(\varOmega)}\\
	&\le  C ||  \bar \ct'_{\operatorname{vd}} ||_{\mathcal{M}(\varOmega)}+2 || \st_{\operatorname{d}}||_{L^2(\varOmega)}\\
	&\le C.
\end{align*}
Since $|\st_h(1)| \ge \frac{1}{2} |\st(1)|$ for $h$ small enough we conclude $| \hat \ct | \le C$, which finally delivers the boundedness $|| \bar \ct_{\operatorname{vd}} ||_{BV(\varOmega)} \le C$.
\end{proof} }

Analogously to Theorem~\ref{thm:HHNoptcond} from the continuous setting we derive the optimality conditions for \eqref{eq:HHNproblemdiscrete}.

\begin{theorem}\label{thm:HHNoptconddisc}
	The control ${ \bar \ct_{\operatorname{vd}} } \in BV(\varOmega)$ with associated $\textcolor{black}{(\bar \mix_h, \bar \st_h) = ( \mix_h({ \bar \ct_{\operatorname{vd}} }), \st_h({ \bar \ct_{\operatorname{vd}} }))} \in P_1 \times P_0$ is optimal for the problem \eqref{eq:HHNproblemdiscrete} if and only if there exists a unique \textcolor{black}{pair} $(\bar \mixad_h,\bar \ad_h,) \in P_1 \times P_0$, such that $({\bar \ct_{\operatorname{vd}} }, \bar \mix_h, \bar \st_h, \bar \mixad_h, \bar \ad_h)$ and the $P_1$-function $\bar \Phi_h(x) \coloneqq \int_0^x \bar \ad_h(s) \, d s $ satisfy $\bar \Phi_h (1) =0$ as well as  
	
	\begin{alignat}{2}
		\int_{ \varOmega} \bar \Phi_h \, d (\Upsilon_h { \bar \ct_{\operatorname{vd}} }) ' &= \alpha || {\bar \ct_{\operatorname{vd}} } ' ||_{\mathcal{M}(\varOmega)}, && \label{eq:HHNoptdisc1}\\
		|| \bar \Phi_h ||_{\mathcal{C}(\varOmega)} &\leq \alpha, && \label{eq:HHNoptdisc2} \\
		{ \int_{ \varOmega} \left( \frac{1}{a} \bar \mix_h v_h + v_h' \bar \st_h \right) \, \textcolor{black}{\rm{dx}}} &=0 \qquad &&\forall v_h \in P_1, \label{eq:HHNoptdisc3}\\
		{ \int_{ \varOmega} \left( -\bar \mix_h' w_h +d \bar\st_h w_h \right) \, \textcolor{black}{\rm{dx}} }&= {  \int_{ \varOmega} w_h \bar \ct_{\operatorname{vd}} \, \textcolor{black}{\rm{dx}}} \qquad &&\forall w_h \in P_0,\label{eq:HHNoptdisc4}\\
		{ \int_{ \varOmega} \left( \frac{1}{a} \bar \mixad_h v_h +v_h' \bar \ad_h \right) \, \textcolor{black}{\rm{dx}} }&=0 \qquad &&\forall v_h \in P_1, \label{eq:HHNoptdisc5}\\
		{ \int_{ \varOmega} \left(- \bar \mixad_h' w_h + d \bar \ad_h w_h \right) \, \textcolor{black}{\rm{dx}} }&= {  \int_{ \varOmega} w_h \left( \bar \st_h - \st_{\operatorname{d}} \right)\, \textcolor{black}{\rm{dx}}} \qquad &&\forall w_h \in P_0, \label{eq:HHNoptdisc6}\\
		-(\bar \ad_h, \ct - {\bar \ct_{\operatorname{vd}} })_{L^2(\varOmega)} &\leq \alpha \left( ||\ct'||_{\mathcal{M}(\varOmega)} - ||{\bar \ct_{\operatorname{vd}} }'||_{\mathcal{M}(\varOmega)} \right) \qquad &&\forall \ct \in BV(\varOmega). \label{eq:HHNoptdisc7}
	\end{alignat}
\end{theorem}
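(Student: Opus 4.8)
The plan is to mirror the proof of Theorem~\ref{thm:HHNoptcond}, exploiting that \eqref{eq:HHNproblemdiscrete} is a convex problem: the discrete control-to-state map $\ct_{\operatorname{vd}} \mapsto \st_h(\ct_{\operatorname{vd}})$ is linear and well-defined by \eqref{eq:HHNPDEmixedweakdiscrete}, the tracking term is convex and quadratic, and $\ct_{\operatorname{vd}} \mapsto \alpha\|\ct_{\operatorname{vd}}'\|_{\mathcal{M}(\varOmega)}$ is convex and positively homogeneous. Hence the first-order condition is both necessary and sufficient, and it takes the form of the variational inequality obtained by differentiating the smooth tracking term and appending the convex seminorm: $\bar\ct_{\operatorname{vd}}$ is optimal if and only if $(\bar\st_h - \st_{\operatorname{d}}, \st_h(\ct) - \bar\st_h)_{L^2(\varOmega)} + \alpha(\|\ct'\|_{\mathcal{M}(\varOmega)} - \|\bar\ct_{\operatorname{vd}}'\|_{\mathcal{M}(\varOmega)}) \ge 0$ for all $\ct \in BV(\varOmega)$. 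Existence of a minimizer is provided by Theorem~\ref{thm:HHNexistencedisc}.

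First I would introduce the discrete adjoint pair $(\bar\mixad_h, \bar\ad_h) \in P_1 \times P_0$ as the solution of \eqref{eq:HHNoptdisc5}--\eqref{eq:HHNoptdisc6}; unique solvability in $P_1\times P_0$ follows from the well-posedness of the lowest-order Raviart--Thomas system, the adjoint matrix being the transpose of the state matrix. Testing the discrete state equation for $\st_h(\ct) - \bar\st_h$ and the adjoint equation \eqref{eq:HHNoptdisc5}--\eqref{eq:HHNoptdisc6} against each other's solution components, i.e.\ inserting $v_h = \bar\mixad_h$, $w_h = \bar\ad_h$ on one side and $v_h = \mix_h(\ct) - \bar\mix_h$, $w_h = \st_h(\ct) - \bar\st_h$ on the other, yields the adjoint identity $(\bar\st_h - \st_{\operatorname{d}}, \st_h(\ct) - \bar\st_h)_{L^2(\varOmega)} = (\bar\ad_h, \ct - \bar\ct_{\operatorname{vd}})_{L^2(\varOmega)}$. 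Substituting this into the variational inequality produces exactly \eqref{eq:HHNoptdisc7}, while \eqref{eq:HHNoptdisc3}--\eqref{eq:HHNoptdisc6} are simply the discrete state and adjoint equations by definition.

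Next I would translate \eqref{eq:HHNoptdisc7} into the measure-theoretic conditions \eqref{eq:HHNoptdisc1}--\eqref{eq:HHNoptdisc2}. Since $\bar\ad_h \in P_0$, property \eqref{eq:HHNUpsilon1} lets me replace the argument by its projection, $(\bar\ad_h, \ct - \bar\ct_{\operatorname{vd}})_{L^2(\varOmega)} = (\bar\ad_h, \Upsilon_h\ct - \Upsilon_h\bar\ct_{\operatorname{vd}})_{L^2(\varOmega)}$. Because $\bar\ad_h = \bar\Phi_h'$ with $\bar\Phi_h \in P_1$ continuous, and $\Upsilon_h\ct$ is piecewise constant, a summation by parts over the nodes turns this into $-\int_\varOmega\bar\Phi_h\,d(\Upsilon_h\ct - \Upsilon_h\bar\ct_{\operatorname{vd}})'$, provided the boundary contributions at $x_0 = 0$ and $x_N = 1$ vanish. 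The relation $\bar\Phi_h(0) = 0$ is immediate from the definition, and $\bar\Phi_h(1) = 0$ follows by testing \eqref{eq:HHNoptdisc7} with the admissible perturbations $\ct = \bar\ct_{\operatorname{vd}} \pm 1$, which leave the seminorm unchanged and force $(\bar\ad_h, 1)_{L^2(\varOmega)} = 0$.

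With this the variational inequality reads $\int_\varOmega\bar\Phi_h\,d(\Upsilon_h\ct)' - \int_\varOmega\bar\Phi_h\,d(\Upsilon_h\bar\ct_{\operatorname{vd}})' \le \alpha\|\ct'\|_{\mathcal{M}(\varOmega)} - \alpha\|\bar\ct_{\operatorname{vd}}'\|_{\mathcal{M}(\varOmega)}$ for all $\ct$. Testing with $\ct = 0$ and $\ct = 2\bar\ct_{\operatorname{vd}}$ and invoking positive homogeneity pins down the equality \eqref{eq:HHNoptdisc1}. For \eqref{eq:HHNoptdisc2}, I would combine \eqref{eq:HHNoptdisc1} with the remaining inequality to obtain $\int_\varOmega\bar\Phi_h\,d(\Upsilon_h\ct)' \le \alpha\|\ct'\|_{\mathcal{M}(\varOmega)}$ for all $\ct$; choosing $\ct$ to be a single jump at an interior node $x_k$ bounds $|\bar\Phi_h(x_k)| \le \alpha$, and since $\bar\Phi_h$ is piecewise linear with $\bar\Phi_h(0) = \bar\Phi_h(1) = 0$ its sup-norm is attained at a node, giving \eqref{eq:HHNoptdisc2}. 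The converse runs backwards: \eqref{eq:HHNoptdisc1}--\eqref{eq:HHNoptdisc2} together with the duality estimate between $\mathcal{C}(\varOmega)$ and $\mathcal{M}(\varOmega)$ and the stability \eqref{eq:HHNUpsilon3} reproduce the variational inequality and hence optimality. I expect the main obstacle to be the careful handling of the projection $\Upsilon_h$ in the integration-by-parts step, namely ensuring the boundary terms cancel and that the pointwise bound \eqref{eq:HHNoptdisc2} can be extracted at the nodes from a condition phrased through $\Upsilon_h$, rather than the adjoint duality, which is routine.
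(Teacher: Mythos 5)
Your proposal is correct and takes essentially the same route as the paper: convexity reduces optimality to the variational inequality \eqref{eq:HHNoptdisc7} (the paper phrases this as $0 \in \partial J_h(\bar \ct_{\operatorname{vd}})$ plus chain and sum rule, while you make the underlying discrete adjoint identity explicit by cross-testing the state and adjoint systems), and the conditions $\bar \Phi_h(1)=0$, \eqref{eq:HHNoptdisc1} and \eqref{eq:HHNoptdisc2} are then extracted exactly as in the paper via the test controls $\ct = 0$, $2\bar\ct_{\operatorname{vd}}$, $\bar\ct_{\operatorname{vd}} \pm \tilde\ct$, the projection property \eqref{eq:HHNUpsilon1}, and summation by parts with vanishing boundary terms. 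The only cosmetic differences are that the paper obtains \eqref{eq:HHNoptdisc2} directly by testing with $1_{(0,x)}$ for arbitrary $x \in \varOmega$ rather than through node values and piecewise linearity of $\bar\Phi_h$, and that it leaves the converse implicit (it follows at once since \eqref{eq:HHNoptdisc7} is itself equivalent to optimality), whereas you reconstruct it from \eqref{eq:HHNoptdisc1}--\eqref{eq:HHNoptdisc2} with \eqref{eq:HHNUpsilon3}.
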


\begin{proof}
	The optimality of ${\bar \ct_{\operatorname{vd}} } \in BV(\varOmega)$ is equivalent to $0 \in \partial J_h ( { \bar \ct_{\operatorname{vd}} })$. By applying the chain rule and the sum rule, we then deduce
	\begin{equation}
		- \textcolor{black}{ \ad_h}\left( \bar \st_h - \st_{\operatorname{d}} \right) \in \partial \left( \alpha || {\bar \ct_{\operatorname{vd}} }'||_{\mathcal{M}(\varOmega)} \right).
		\label{eq:HHNdiscpartial}
 	\end{equation}
 	{We recall $(\bar \mix_h,\bar \st_h) = \textcolor{black}{( \mix_h({\bar \ct_{\operatorname{vd}} }), \st_h({\bar \ct_{\operatorname{vd}} })) }$, which gives \eqref{eq:HHNoptdisc3} and \eqref{eq:HHNoptdisc4}. With the definition of $(\bar \mixad_h, \bar \ad_h) = \textcolor{black}{( \mixad_h(\bar \st_h - \st_{\operatorname{d}}), \ad_h(\bar \st_h - \st_{\operatorname{d}}))}$ we directly have \eqref{eq:HHNoptdisc5} and \eqref{eq:HHNoptdisc6}, and also that \eqref{eq:HHNoptdisc7} is an equivalent reformulation of \eqref{eq:HHNdiscpartial}.} Inserting $\ct = 2 {\bar \ct_{\operatorname{vd}} }$, $\ct = 0$, $\ct = {\bar \ct_{\operatorname{vd}} } + \tilde \ct$, and $\ct = {\bar \ct_{\operatorname{vd}} } - \tilde \ct$ for arbitrary $\tilde \ct \in BV(\varOmega)$ in \eqref{eq:HHNoptdisc7} delivers
 	\begin{align}
 		- (\bar \ad_h, {\bar \ct_{\operatorname{vd}} })_{L^2(\varOmega)} &= \alpha ||{\bar \ct_{\operatorname{vd}} }'||_{\mathcal{M}(\varOmega)},
	 	\label{eq:HHNdiscequal} \\
	 	\left| (\bar \ad_h,  \ct)_{L^2(\varOmega)} \right| &\leq \alpha || \ct'||_{\mathcal{M}(\varOmega)} \qquad \forall \ct \in BV(\varOmega). \label{eq:HHNdiscinequal}
  	\end{align}
 	Using \eqref{eq:HHNdiscinequal} we conclude
 	\begin{equation*}
	 	\bar \Phi_h (1) = \int_0^1 \bar \ad_h(s) \, ds = (\bar \ad_h,1)_{L^2(\varOmega)} = 0,
 	\end{equation*}
 	and
 	\begin{align*}
 		| \bar \Phi_h(x)| = \left| \int_0^x \bar \ad_h(s) \, ds  \right| = \left| \int_{ \varOmega} \bar \ad_h 1_{(0,x)} \, \textcolor{black}{\rm{dx}} \right| = \left| (\bar \ad_h , 1_{(0,x)})_{L^2(\varOmega)} \right| \leq \alpha.
 	\end{align*}
 	So, we can deduce \eqref{eq:HHNoptdisc2}. We recall the given structure of $\bar \ad_h, \Upsilon_h   {\bar \ct_{\operatorname{vd}} } \in P_0$, and write
 	\begin{equation*}
 		\bar \ad_h = \sum_{i=1}^N \bar \ad_i \chi_i \qquad \text{and} \qquad \Upsilon_h {\bar \ct_{\operatorname{vd}} } = \sum_{i=1}^N \bar \ct_i \chi_i.
 	\end{equation*}
 	Also, we collect the following equality for every gridpoint $x_i \in \varOmega$
 	\begin{equation*}
	 	\bar \Phi_h(x_i) = \int_0^{x_i} \bar \ad_h(s) \, ds = \sum_{j=1}^i \bar \ad_j h_j.
 	\end{equation*}
 	Now, we calculate
 	\begin{align*}
 		\int_{ \varOmega} \bar \Phi_h \, d (\Upsilon_h {\bar \ct_{\operatorname{vd}} } )' 
 		&= \sum_{i=1}^{N-1} (\bar \ct_{i+1} - \bar \ct_i) \bar \Phi_h (x_i) \\
 		&= \sum_{i=1}^{N-1} \bar \ct_{i+1}\bar \Phi_h (x_i)  - \bar \ct_i  \bar \Phi_h (x_{i-1}) - \bar \ct_i \bar \ad_i h_i \\
 		&= \bar \ct_N \bar \Phi_h (x_{N-1}) - \bar \ct_1 \bar \Phi_h (0) - \sum_{i=1}^{N-1} \bar \ct_i \bar \ad_i h_i \\
 		&= \bar \Phi_h(1) - \sum_{i=1}^{N} \bar \ct_i \bar \ad_i h_i \\
 		&= - (\bar \ad_h, \Upsilon_h {\bar \ct_{\operatorname{vd}} })_{L^2(\varOmega)} \\
 		&\stackrel{\eqref{eq:HHNUpsilon1}}{=} - (\bar \ad_h,  {\bar \ct_{\operatorname{vd}} } )_{L^2(\varOmega)} \\
 		&\stackrel{\eqref{eq:HHNdiscequal}}{=} \alpha || { \bar \ct_{\operatorname{vd}} ' } ||_{\mathcal{M}(\varOmega)},
 	\end{align*}
 	where we use $\bar \Phi_h(0)=\bar \Phi_h(1) = 0$. This shows \eqref{eq:HHNoptdisc1} and completes the proof.
\end{proof}


\noindent Furthermore, we deduce a similar sparsity structure as in Lemma~\ref{lem:HHNsparsity} for the continuous problem.

\begin{lemma}\label{lem:HHNsparsitydisc}
	If ${\bar \ct_{\operatorname{vd}} }$ is optimal for \eqref{eq:HHNproblemdiscrete}, then there hold
	\begin{align}
		\supp ( ( \Upsilon_h {\bar \ct_{\operatorname{vd}} } )'_+) &\subset \left\{ x \in \varOmega : \bar \Phi_h (x) = \alpha \right\}, \notag \\ 
		\supp ( (\Upsilon_h {\bar \ct_{\operatorname{vd}} } )'_-) &\subset \left\{ x \in \varOmega : \bar \Phi_h (x) = - \alpha \right\}, \notag  
	\end{align}
	where $ (\Upsilon_h {\bar \ct_{\operatorname{vd}} } )' = (\Upsilon_h {\bar \ct_{\operatorname{vd}} } )'_+ - (\Upsilon_h {\bar \ct_{\operatorname{vd}} } )'_- $ is the Jordan decomposition. Moreover, we have
	\begin{equation*}
		\supp ( (\Upsilon_h {\bar \ct_{\operatorname{vd}} } )') \subset \left\{ x \in \varOmega : |\bar \Phi_h (x) | = \alpha \right\}.
	\end{equation*}
\end{lemma}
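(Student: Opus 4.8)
The plan is to mimic the proof of the continuous sparsity result Lemma~\ref{lem:HHNsparsity}, using as the only inputs the two variational optimality relations \eqref{eq:HHNoptdisc1}, \eqref{eq:HHNoptdisc2} together with the stability estimate \eqref{eq:HHNUpsilon3}. I would set $\mu := (\Upsilon_h \bar\ct_{\operatorname{vd}})'$ and let $\mu = \mu^+ - \mu^-$ be its Jordan decomposition, so that $\mu^+ = (\Upsilon_h \bar\ct_{\operatorname{vd}})'_+$ and $\mu^- = (\Upsilon_h \bar\ct_{\operatorname{vd}})'_-$ are mutually singular nonnegative measures with $\| \mu \|_{\mathcal{M}(\varOmega)} = \mu^+(\varOmega) + \mu^-(\varOmega)$. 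I also record that $\bar\Phi_h \in P_1$ is continuous, so that the level sets $\{\bar\Phi_h = \pm\alpha\}$ are closed.

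First I would establish the sharp complementarity identity $\int_\varOmega \bar\Phi_h \, d\mu = \alpha \| \mu \|_{\mathcal{M}(\varOmega)}$. This comes out of the squeeze
\begin{equation*}
\alpha \, \| \bar\ct_{\operatorname{vd}}' \|_{\mathcal{M}(\varOmega)} \stackrel{\eqref{eq:HHNoptdisc1}}{=} \int_\varOmega \bar\Phi_h \, d\mu \leq \| \bar\Phi_h \|_{\mathcal{C}(\varOmega)} \, \| \mu \|_{\mathcal{M}(\varOmega)} \stackrel{\eqref{eq:HHNoptdisc2}}{\leq} \alpha \, \| \mu \|_{\mathcal{M}(\varOmega)} \stackrel{\eqref{eq:HHNUpsilon3}}{\leq} \alpha \, \| \bar\ct_{\operatorname{vd}}' \|_{\mathcal{M}(\varOmega)},
\end{equation*}
whose two ends coincide; hence every inequality is an equality and in particular $\int_\varOmega \bar\Phi_h \, d\mu = \alpha \| \mu \|_{\mathcal{M}(\varOmega)}$.

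Next I would split this identity along the Jordan decomposition. Subtracting $\int_\varOmega \bar\Phi_h \, d\mu = \int_\varOmega \bar\Phi_h \, d\mu^+ - \int_\varOmega \bar\Phi_h \, d\mu^-$ from $\alpha \| \mu \|_{\mathcal{M}(\varOmega)} = \alpha\,\mu^+(\varOmega) + \alpha\,\mu^-(\varOmega)$ gives
\begin{equation*}
\int_\varOmega (\alpha - \bar\Phi_h) \, d\mu^+ + \int_\varOmega (\alpha + \bar\Phi_h) \, d\mu^- = 0.
\end{equation*}
By \eqref{eq:HHNoptdisc2} both integrands are nonnegative and both measures are nonnegative, so each term vanishes separately. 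Consequently $\bar\Phi_h = \alpha$ holds $\mu^+$-a.e. and $\bar\Phi_h = -\alpha$ holds $\mu^-$-a.e., that is, $\mu^+(\{\bar\Phi_h \neq \alpha\}) = 0$ and $\mu^-(\{\bar\Phi_h \neq -\alpha\}) = 0$.

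Finally I would upgrade these almost-everywhere statements to the claimed topological support inclusions. Since $\bar\Phi_h$ is continuous, $\{\bar\Phi_h \neq \alpha\}$ is open and carries no $\mu^+$-mass, hence lies in the complement of $\supp \mu^+$, giving $\supp \mu^+ \subset \{\bar\Phi_h = \alpha\}$ and, symmetrically, $\supp \mu^- \subset \{\bar\Phi_h = -\alpha\}$; these are the first two inclusions. The third follows from $\supp \mu \subset \supp \mu^+ \cup \supp \mu^-$, which holds because $\mu^+$ and $\mu^-$ are mutually singular. The only genuinely delicate point is this last passage from an a.e. statement to a statement about the topological support, and it is exactly the continuity of $\bar\Phi_h \in P_1$ that makes it work; all the remaining steps are immediate consequences of the optimality system established in Theorem~\ref{thm:HHNoptconddisc}.
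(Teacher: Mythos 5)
Your proof is correct and is essentially the argument the paper intends: the paper gives no written proof, stating only that the result ``can be proven as in the continuous case,'' and your complementarity-plus-bound-plus-continuity argument is precisely that standard proof, carried out in full. The one genuinely discrete-specific point --- that \eqref{eq:HHNoptdisc1} has $\|\bar \ct_{\operatorname{vd}}'\|_{\mathcal{M}(\varOmega)}$ on the right but $(\Upsilon_h \bar \ct_{\operatorname{vd}})'$ under the integral, a mismatch you close with the stability estimate \eqref{eq:HHNUpsilon3} to get the squeeze $\alpha\|\bar \ct_{\operatorname{vd}}'\|_{\mathcal{M}(\varOmega)} \le \alpha\|(\Upsilon_h \bar \ct_{\operatorname{vd}})'\|_{\mathcal{M}(\varOmega)} \le \alpha\|\bar \ct_{\operatorname{vd}}'\|_{\mathcal{M}(\varOmega)}$ --- is exactly the detail needed to adapt the continuous argument, and you handle it correctly.
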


The sparsity result can be proven as in the continuous case.

\noindent Even though the control is not discretized, we can deduce information about the structure of the control from the optimality conditions and the sparsity structure, especially properties \eqref{eq:HHNoptdisc2} and \eqref{eq:HHNsparsedisc3}. Let us make the following structural assumption:
\begin{assumption}\label{ass:HHNassumption}
$\bar \ad_h(x) \neq 0$ for all $x \in \varOmega$.
\end{assumption}

\begin{remark}
	This assumption can be easily verified in a numerical setting, hence it is highly practical. Also, later on, we will formulate Assumption \ref{ass:HHNassumptioncont} for the continuous adjoint state $\bar \ad$. We then show that Assumption \ref{ass:HHNassumption} always holds when Assumption \ref{ass:HHNassumptioncont} is satisfied for small enough grid sizes. 
\end{remark}

From $\bar \ad_h(x) \neq 0$ for all $x \in \varOmega$ we deduce $p_i \neq 0 $ for all $i=1,\ldots,N$. Since $\bar\Phi_h(x_i) = \bar\Phi_h(x_{i-1}) + \bar \ad_i h_i$ holds, we get $\bar \Phi_h(x_i) \neq \bar \Phi_h(x_{i-1})$ for $i=1,\ldots,N$. Combining this property with the facts that $\bar\Phi_h \in P_1$ and $|| \bar \Phi_h||_{\mathcal{C}(\varOmega)} \leq \alpha$ we deduce 
\begin{equation*}
	\left\{ x \in \varOmega : | \bar \Phi_h (x) | = \alpha \right\} \subset \left\{ x_i\right\}_{i=1}^N.
\end{equation*}

Furthermore, we have $\bar \Phi_h(x_{i-1}) = \bar \Phi_h(x_{i}) - \bar \ad_i h_i $ and $\bar \Phi_h(x_{i+1}) = \bar \Phi_h(x_{i}) + \bar \ad_{i+1} h_{i+1} $ for any $i=1,\ldots,N$. Now fix some $i\in \left\{ 1,\ldots,N\right\}$ and let $\bar \Phi_h(x_i) = \alpha$. We then use that $p_i \neq 0, p_{i+1}\neq 0$ and $|| \bar \Phi_h||_{\mathcal{C}(\varOmega)} \leq \alpha$ to deduce $\bar \ad_i > 0$ and $\bar \ad_{i+1} < 0$. Similarly, for $\bar \Phi_h(x_i) = -\alpha$ we see $\bar \ad_i < 0$ and $\bar \ad_{i+1} > 0$. Altogether, under Assumption \ref{ass:HHNassumption}, we get 

\begin{equation}\label{eq:HHNsparsedisc3}
\supp ( (\Upsilon_h {\bar \ct_{\operatorname{vd}} } )') \subset \left\{ x \in \varOmega : |\bar \Phi_h (x) | = \alpha \right\} \subset \left\{ x \in \varOmega : \sign(\bar \ad_h(x_-)) \neq \sign(\bar \ad_h(x_+)) \right\},
\end{equation}

{where $v(x_-) := \lim_{s \nearrow x} v(s)$ and $v(x_+) := \lim_{s \searrow x} v(s)$.}

Following \cite{hafemeyer2019}, we can express the optimal discrete control $\Upsilon_h {\bar \ct_{\operatorname{vd}} }$ and its derivative as
\begin{equation*}
	\Upsilon_h {\bar \ct_{\operatorname{vd}} } = \bar a_h + \sum_{i=1}^N \bar c_h^i 1_{(x_i,1)}, \qquad 
	(\Upsilon_h {\bar \ct_{\operatorname{vd}} })' = \sum_{i=1}^N \bar c_h^i \delta_{x_i},
\end{equation*} 
where $\bar a_h \in \mathds{R}, \bar c_h = (\bar c_h^1, \ldots, \bar c_h^N)^{\top} \in \mathds{R}^N$. 
%
%
%
We can determine the coefficients $\bar a_h$ and $\bar c_h$ by solving the finite-dimensional, convex optimization problem
\begin{equation}
	\min_{a_h \in \mathds{R},  c_h \in \mathds{R}^{N}} J_h(a_h,c_h) := \tfrac{1}{2} \| \st_h - \st_{\operatorname{d}} \|_{L^2(\varOmega)}^2 + \alpha \sum_{i=1}^{N} |c_h^i| .
	\tag{$P_h$}
	\label{eq:HHNfindimproblem}
\end{equation}

\subsection{Error estimates} \label{subsec:HHNErrorEstimates}

{
For our numerical analysis we rely on finite element error estimates for mixed approximation of our elliptic two point boundary value problem.
}
Error estimates for mixed finite elements applied to elliptic partial differential equations have been proven e.g. in \cite{BrezziFortin1991,GastaldiNochetto,GiraultRaviart,gong2011mixed,RaviartThomas}.
{
	For the convenience of the reader we in the following cite the respective results, adapted to our 1D situation, which we need for our numerical analysis.\\
	Before we start, let us recall that $BV(\varOmega) \hookrightarrow L^{\infty}(\varOmega)$ continuously for $\varOmega =(0,1)$, so that for given $\ct \in BV(\varOmega)$, the unique solution $(\mix,\st)$ of \eqref{eq:HHNPDEmixedweak} admits the regularity $\mix \in W^{1,\infty}(\varOmega)$ and $\st \in W^{2,\infty}(\varOmega)$. For the solution $(\mixad,\ad)$ of the adjoint equation \eqref{eq:HHNadjointequation} with right hand side $\st - \st_{\operatorname{d}}$ we thus may expect solutions $\mixad \in H^1(\varOmega), \ad \in H^2(\varOmega)$ for $\st_{\operatorname{d}} \in L^2(\varOmega)$, but also higher regularity up to $\mixad \in W^{3,\infty}(\varOmega)$, $\ad \in W^{4,\infty}(\varOmega)$, if e.g. $\st_{\operatorname{d}}\in W^{2,\infty}(\varOmega)$ and $a,d$ are smooth enough, e.g. $a \in W^{3,\infty}, d \in W^{2,\infty}$. The regularity of $\st_{\operatorname{d}}$ thus will have an influence on the quality of uniform error estimates for $\mixad$ and $\ad$, as shall be seen below.  }
	
	As in \cite{DouglasRoberts1985} we define the standard $L^2(\varOmega)$-orthogonal projection $P_h : L^2(\varOmega) \rightarrow P_0$, which for $w \in L^2(\varOmega)$ is defined by
	\begin{equation*}
		(w-P_h w, w_h)\textcolor{black}{_{L^2(\varOmega)}} = 0 \qquad \forall w_h \in P_0.
	\end{equation*}
	Furthermore, we shall use the Fortin projection (see \cite{BrezziFortin1991,DouglasRoberts1985}), defined as $\Pi_h:H^1(\varOmega) \rightarrow P_1$, which for $v\in H^1(\varOmega))$ is defined by 
	\begin{equation*}
		((v- \Pi_h v)', w_h)\textcolor{black}{_{L^2(\varOmega)}}  = 0 \qquad \forall w_h \in P_0.
	\end{equation*}
	%
	For later use we collect the following approximation properties, e.g. from \cite[(A3)]{GastaldiNochetto} and \cite[Section 3]{gong2011mixed} with $2 \le p \le \infty$:
	\begin{alignat}{2}
		||w - P_h w ||_{L^p(\varOmega)} &\leq C h || w' ||_{L^p(\varOmega)} \qquad &&\text{for } w \in W^{1,p}(\varOmega),\notag \\
		||v - \Pi_h v ||_{L^p(\varOmega)} &\leq C h || v' ||_{L^p(\varOmega)} \qquad &&\text{for } v \in W^{1,p}(\varOmega), \notag \\
		||(v - \Pi_h v)' ||_{L^2(\varOmega)} &\leq C h || v'' ||_{L^2(\varOmega)} \qquad &&\text{for }  v' \in H^{1}(\varOmega). \notag
	\end{alignat}
	{
		We repeat some useful a priori error estimates for the mixed finite element approximation. 
\begin{lemma}\label{lem:HHNLemmaXX}		
	Let $\ct \in L^\infty(\varOmega)$ and let $(\mix,\st) = (\mix(\ct), \st(\ct))$ denote the unique solution to \eqref{eq:HHNPDEmixedweak}. Let $(\mix_h(\ct),\st_h(\ct))$ denote the unique corresponding mixed finite element approximation to $(\mix,\st)$. Then, we have 
			\begin{equation} \label{eq:HHND}
				|| \mix - \mix_h(\ct)||_{L^r(\varOmega)} + || \st - \st_h(\ct)||_{L^r(\varOmega)} \le C h \left( || \ct ||_{L^r(\varOmega)} + || \st ||_{W^{2,r}(\varOmega)} \right) \qquad \forall \, 2 \le r < \infty, 
			\end{equation} 
				with $C > 0$ only depending on $r$ and on $\varOmega$, and
			\begin{equation} \label{eq:HHNGN54}
				|| \mix - \mix_h(\ct)||_{L^\infty(\varOmega)} + || \st - \st_h(\ct)||_{L^\infty(\varOmega)} \le C h |\log h| \, || \ct ||_{L^\infty(\varOmega)}.
			\end{equation}
	With $P_h$ and $\Pi_h$ as introduced above, it holds
	\begin{align}
		 || \mix - \mix_h(\ct)||_{L^\infty(\varOmega)} &\le C \left( || \mix - \Pi_h \mix ||_{L^\infty(\varOmega)} + h |\log h| \, ||\ct - P_h \ct ||_{L^\infty(\varOmega)} \right), \label{eq:HHNGN48} \\
		 || P_h \st - \st_h(\ct) ||_{L^\infty(\varOmega)}  &\le C h |\log h| \left( || \mix - \mix_h(\ct)||_{L^\infty(\varOmega)} + || \ct - P_h \ct ||_{L^\infty(\varOmega)} \right) .\label{eq:HHNGN413}
	\end{align}	
Let $\st \in W^{3,\infty}(\varOmega)$, i.e. $\ct \in W^{1,\infty}(\varOmega)$, then 
			\begin{align}
				h \, || \st - \st_h(\ct)||_{L^\infty(\varOmega)} + |\log h|^{-1} || \mix - \mix_h(\ct)||_{L^\infty(\varOmega)} &\le C h^2 || \ct||_{W^{1,\infty}(\varOmega)} . \label{eq:HHNGN52}
			\end{align}
	\end{lemma}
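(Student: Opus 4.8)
The plan is to read this lemma as a collection of a priori estimates that are, at bottom, the one-dimensional specializations of the known maximum- and $L^r$-norm error analysis for the lowest order Raviart--Thomas pair $(P_1,P_0)$; accordingly the work is less about inventing arguments than about matching each inequality to its source and propagating the regularity $\ct \in L^\infty(\varOmega) \Rightarrow \st \in W^{2,\infty}(\varOmega)$, $\mix \in W^{1,\infty}(\varOmega)$ (and, under extra smoothness of $a,d$, $\st \in W^{3,\infty}(\varOmega) \Rightarrow \mix \in W^{2,\infty}(\varOmega)$) through the constants.

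First I would establish \eqref{eq:HHND}. The standard mixed argument compares the discrete solution with the projections $\Pi_h \mix$ and $P_h \st$: the defining property of $\Pi_h$ gives the commuting relation $(\Pi_h \mix)' = P_h \mix'$, so that $(\Pi_h\mix - \mix_h,\, P_h\st - \st_h)$ solves a discrete saddle-point system whose right-hand side consists only of projection residuals. The discrete inf--sup stability of $(P_1,P_0)$ in $L^r$, together with the approximation properties collected before the lemma, then bounds $\|\Pi_h\mix - \mix_h\|_{L^r(\varOmega)} + \|P_h\st-\st_h\|_{L^r(\varOmega)}$ by $Ch(\|\ct\|_{L^r(\varOmega)} + \|\st\|_{W^{2,r}(\varOmega)})$; a triangle inequality closes the estimate. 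I would cite \cite{gong2011mixed,GastaldiNochetto} for the $L^r$-stability, and use $\mix = a\st'$ to convert $\|\mix\|_{W^{1,r}(\varOmega)}$ into $\|\st\|_{W^{2,r}(\varOmega)}$.

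The three maximum-norm estimates \eqref{eq:HHNGN54}, \eqref{eq:HHNGN48} and \eqref{eq:HHNGN413} are the substantive input, and I would quote them directly from the weighted-norm and discrete Green's function analysis of Gastaldi--Nochetto, restricted to $\varOmega=(0,1)$; the $|\log h|$ factors are exactly the loss that the discrete Green's function produces in the maximum norm for the lowest order element. Given these, \eqref{eq:HHNGN52} is purely a matter of tracking orders under the extra regularity $\st\in W^{3,\infty}(\varOmega)$ (equivalently $\ct \in W^{1,\infty}(\varOmega)$, $\mix\in W^{2,\infty}(\varOmega)$). The decisive point is that in 1D the Fortin projection coincides with nodal interpolation, hence is \emph{second} order, $\|\mix-\Pi_h\mix\|_{L^\infty(\varOmega)} \le Ch^2\|\mix''\|_{L^\infty(\varOmega)}$, whereas $\|\ct - P_h\ct\|_{L^\infty(\varOmega)} \le Ch\|\ct'\|_{L^\infty(\varOmega)}$. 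Feeding these into \eqref{eq:HHNGN48} yields $\|\mix - \mix_h\|_{L^\infty(\varOmega)} \le Ch^2|\log h|\,\|\ct\|_{W^{1,\infty}(\varOmega)}$, which gives the $|\log h|^{-1}\|\mix-\mix_h\|_{L^\infty(\varOmega)}$ contribution. Inserting this and $\|\ct - P_h\ct\|_{L^\infty(\varOmega)}$ into \eqref{eq:HHNGN413} shows $\|P_h\st - \st_h\|_{L^\infty(\varOmega)} = O(h^2|\log h|)$, which combined with $\|\st - P_h\st\|_{L^\infty(\varOmega)} \le Ch\|\st'\|_{L^\infty(\varOmega)}$ leaves $\|\st-\st_h\|_{L^\infty(\varOmega)} \le Ch\|\ct\|_{W^{1,\infty}(\varOmega)}$; multiplying by $h$ produces the second contribution and completes \eqref{eq:HHNGN52}.

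The main obstacle is clearly \eqref{eq:HHNGN48}--\eqref{eq:HHNGN413}: genuine maximum-norm stability for the mixed method is hard and rests on the discrete Green's function machinery that I would not reprove. My responsibility would instead be to verify that the hypotheses of that theory transfer to our setting -- a quasi-uniform mesh and the operators $\Pi_h,P_h$ as defined above -- and to confirm the regularity bookkeeping, in particular that the identity $\mix' = d\st - \ct$, differentiated to $\mix'' = d'\st + d\st' - \ct'$, together with elliptic regularity indeed controls $\|\mix\|_{W^{2,\infty}(\varOmega)}$ by $\|\ct\|_{W^{1,\infty}(\varOmega)}$ when $a,d$ are smooth enough.
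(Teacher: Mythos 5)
Your proposal is correct, and for the hard estimates it follows the same route as the paper: the paper proves this lemma purely by citation, referring \eqref{eq:HHND} to the case $k=0$ of \cite[Theorem 4.2.]{duran1988error} and \eqref{eq:HHNGN54}, \eqref{eq:HHNGN48}, \eqref{eq:HHNGN413}, \eqref{eq:HHNGN52} to \cite[Corollary 5.5., Lemma 4.2., Lemma 4.4., Corollary 5.2.]{GastaldiNochetto}, respectively. You deviate in two places, both legitimately. First, for \eqref{eq:HHND} you sketch the classical argument (error system for $(\Pi_h\mix-\mix_h,\,P_h\st-\st_h)$ via the commuting property $(\Pi_h v)'=P_h v'$, then $L^r$-stability and approximation) instead of invoking Dur\'an; this is exactly the mechanism behind the cited theorem, so nothing is lost, though an honest write-up of the $L^r$-stability would be the longest part of the lemma. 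Second, you \emph{derive} \eqref{eq:HHNGN52} from \eqref{eq:HHNGN48}--\eqref{eq:HHNGN413} rather than citing it: since in 1D the Fortin operator (with the standard nodal normalization, which the paper's defining relation fixes only up to a constant) is nodal interpolation, one has $\|\mix-\Pi_h\mix\|_{L^\infty(\varOmega)}\le Ch^2\|\mix''\|_{L^\infty(\varOmega)}$ while $\|\ct-P_h\ct\|_{L^\infty(\varOmega)}\le Ch\|\ct'\|_{L^\infty(\varOmega)}$, and your chain $\|\mix-\mix_h\|_{L^\infty(\varOmega)}\le Ch^2|\log h|\,\|\ct\|_{W^{1,\infty}(\varOmega)}$, then $\|P_h\st-\st_h\|_{L^\infty(\varOmega)}\le Ch^2|\log h|\,\|\ct\|_{W^{1,\infty}(\varOmega)}$, then a triangle inequality with $\|\st-P_h\st\|_{L^\infty(\varOmega)}\le Ch\|\st'\|_{L^\infty(\varOmega)}$ is sound and reproduces \eqref{eq:HHNGN52}. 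What this buys is transparency: it exhibits \eqref{eq:HHNGN52} as a corollary of the two local maximum-norm estimates plus 1D interpolation, whereas the paper's citation hides this. The price is that your version makes visible hypotheses the paper leaves buried in the reference, namely coefficient smoothness ($d\in W^{1,\infty}(\varOmega)$, via $\mix''=d'\st+d\st'-\ct'$, and $a$ smooth enough that $\|\st\|_{W^{2,\infty}(\varOmega)}\le C\|\ct\|_{L^\infty(\varOmega)}$) and a mild restriction of the type $h|\log h|\le 1$; your closing remark that the Gastaldi--Nochetto machinery presumes mesh quasi-uniformity is likewise a hypothesis the paper never states but implicitly uses.
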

For the proof of \eqref{eq:HHND} we refer to the case $k=0$ in \cite[Theorem 4.2.]{duran1988error}. The results \eqref{eq:HHNGN54}, \eqref{eq:HHNGN48}, \eqref{eq:HHNGN413}, and \eqref{eq:HHNGN52} are \cite[Corollary 5.5., Lemma 4.2., Lemma 4.4., and Corollary 5.2.]{GastaldiNochetto}, respectively.
}

We are now prepared to prove the following a priori error estimate for the state.


\begin{theorem}\label{thm:HHNst}
	{  
		Let $\bar \ct  \in BV(\varOmega)$ denote the unique solution to \eqref{eq:HHNproblem} with associated unique $(\bar \mix, \bar \st)$ from \eqref{eq:HHNPDEmixedweak}, and let $\bar u_{\operatorname{vd}} \in BV(\varOmega)$ denote a solution of \eqref{eq:HHNproblemdiscrete} with uniquely determined discrete optimal $( \bar \mix_h, \bar \st_h)$ from \eqref{eq:HHNPDEmixedweakdiscrete}. Suppose $\st_{\operatorname{d}} \in W^{1,\infty}(\varOmega)$. Then we have 
		\begin{equation}
			|| \bar \st - \bar \st_h||_{L^2(\varOmega)}^2 \leq C \left( h^2 + h \, ||\bar \ct - \bar \ct_{\operatorname{vd}}||_{L^1(\varOmega)} \right) ,
		\end{equation}
	with a constant $C >0$ independent of $h$. 
	}
\end{theorem}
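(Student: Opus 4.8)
The plan is to play off the two variational inequalities \eqref{eq:HHNopt7} and \eqref{eq:HHNoptdisc7} against each other and then convert the resulting control pairing into the state error by an adjoint identity. Testing \eqref{eq:HHNopt7} with $\ct = \bar\ct_{\operatorname{vd}}$ and \eqref{eq:HHNoptdisc7} with $\ct = \bar\ct$ and adding the two inequalities cancels all BV-seminorm terms on the right and leaves, after writing $\delta\ct := \bar\ct - \bar\ct_{\operatorname{vd}}$,
\[
(\bar\ad - \bar\ad_h,\, \delta\ct)_{L^2(\varOmega)} \le 0 .
\]
The central tool is the adjoint identity $(\ad(s),\ct)_{L^2(\varOmega)} = (s,\st(\ct))_{L^2(\varOmega)}$ for all $s,\ct \in L^2(\varOmega)$, together with its discrete analogue $(\ad_h(s),\ct)_{L^2(\varOmega)} = (s,\st_h(\ct))_{L^2(\varOmega)}$. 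Both are obtained by testing the (discrete) state equations \eqref{eq:HHNopt3}--\eqref{eq:HHNopt4} (resp.\ \eqref{eq:HHNoptdisc3}--\eqref{eq:HHNoptdisc4}) against the adjoint variables and the (discrete) adjoint equations \eqref{eq:HHNopt5}--\eqref{eq:HHNopt6} (resp.\ \eqref{eq:HHNoptdisc5}--\eqref{eq:HHNoptdisc6}) against the state variables, exploiting the symmetry of $a(\cdot,\cdot)$ and $c(\cdot,\cdot)$; in the discrete case every test function needed lives in $P_1\times P_0$, so the manipulation is identical.

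Recalling $\bar\ad = \ad(\bar\st-\st_{\operatorname{d}})$ and $\bar\ad_h = \ad_h(\bar\st_h-\st_{\operatorname{d}})$, the two identities turn the inequality into
\[
(\bar\st-\st_{\operatorname{d}},\, \st(\delta\ct))_{L^2(\varOmega)} \le (\bar\st_h-\st_{\operatorname{d}},\, \st_h(\delta\ct))_{L^2(\varOmega)} .
\]
By linearity, $\st(\delta\ct) = \bar\st - \st(\bar\ct_{\operatorname{vd}})$ and $\st_h(\delta\ct) = \st_h(\bar\ct) - \bar\st_h$. I introduce the two fixed-control discretization errors $e_1 := \st(\bar\ct) - \st_h(\bar\ct)$ and $e_2 := \st(\bar\ct_{\operatorname{vd}}) - \st_h(\bar\ct_{\operatorname{vd}})$, so that $\st(\delta\ct) = (\bar\st-\bar\st_h) - e_2$ and $\st_h(\delta\ct) = (\bar\st-\bar\st_h) - e_1$. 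Substituting and collecting the contributions that carry $\bar\st-\bar\st_h$ in both arguments produces exactly $\|\bar\st-\bar\st_h\|_{L^2(\varOmega)}^2$; after also using $\bar\st_h-\st_{\operatorname{d}} = (\bar\st-\st_{\operatorname{d}}) - (\bar\st-\bar\st_h)$ to split the $e_1$-term, one arrives at
\[
\|\bar\st-\bar\st_h\|_{L^2(\varOmega)}^2 \le (\bar\st-\st_{\operatorname{d}},\, e_2-e_1)_{L^2(\varOmega)} + (\bar\st-\bar\st_h,\, e_1)_{L^2(\varOmega)} .
\]

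The second term is benign: \eqref{eq:HHND} with $r=2$ and \eqref{eq:HHNstateH2} give $\|e_1\|_{L^2(\varOmega)} \le Ch$, so Young's inequality absorbs $\tfrac12\|\bar\st-\bar\st_h\|_{L^2(\varOmega)}^2$ into the left-hand side and leaves an $O(h^2)$ remainder. The first term is the decisive one. Observing that $e_2-e_1 = -\big(\st(\delta\ct)-\st_h(\delta\ct)\big)$ is the state discretization error at control $\delta\ct$, I apply the adjoint identities once more, now in reverse, to get
\[
(\bar\st-\st_{\operatorname{d}},\, e_2-e_1)_{L^2(\varOmega)} = -\big(\bar\ad - \ad_h(\bar\st-\st_{\operatorname{d}}),\, \delta\ct\big)_{L^2(\varOmega)} ,
\]
i.e.\ the pairing of $\delta\ct$ with the adjoint discretization error at the fixed continuous source $\bar\st-\st_{\operatorname{d}}$. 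Bounding by Hölder, $\big|(\bar\st-\st_{\operatorname{d}}, e_2-e_1)\big| \le \|\bar\ad-\ad_h(\bar\st-\st_{\operatorname{d}})\|_{L^\infty(\varOmega)}\,\|\delta\ct\|_{L^1(\varOmega)}$, and a clean-rate estimate $\|\bar\ad-\ad_h(\bar\st-\st_{\operatorname{d}})\|_{L^\infty(\varOmega)} \le Ch$ then delivers the claimed $C\big(h^2 + h\,\|\bar\ct-\bar\ct_{\operatorname{vd}}\|_{L^1(\varOmega)}\big)$.

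I expect this last estimate to be the main obstacle. Pairing against $\|\delta\ct\|_{L^1(\varOmega)}$ forces an $L^\infty$-bound on the adjoint error, and a naive $L^2$--$L^2$ split (using $\|\delta\ct\|_{L^2(\varOmega)} \le C\|\delta\ct\|_{L^1(\varOmega)}^{1/2}$, since both controls are bounded in $BV(\varOmega)\hookrightarrow L^\infty(\varOmega)$) only produces, after Young, either a standalone $O(h)$ term or a standalone $\|\delta\ct\|_{L^1(\varOmega)}$ term — both too weak. The clean factor $h$ (rather than the $h|\log h|$ of \eqref{eq:HHNGN54}) requires \eqref{eq:HHNGN52} for the adjoint equation, whose hypothesis is precisely that the source $\bar\st-\st_{\operatorname{d}}$ lie in $W^{1,\infty}(\varOmega)$, equivalently $\bar\ad \in W^{3,\infty}(\varOmega)$. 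This is exactly why $\st_{\operatorname{d}}\in W^{1,\infty}(\varOmega)$ is assumed, since $\bar\st \in W^{2,\infty}(\varOmega)$ is already guaranteed by $\bar\ct\in BV(\varOmega)\hookrightarrow L^\infty(\varOmega)$ and the regularity discussion preceding Lemma~\ref{lem:HHNLemmaXX}. The boundedness of $\|\bar\st-\st_{\operatorname{d}}\|_{L^2(\varOmega)}$, $\|e_1\|_{L^2(\varOmega)}$ and of all constants relies on Theorem~\ref{thm:HHNexistencedisc} and is routine.
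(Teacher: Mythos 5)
Your proposal is correct and follows essentially the same route as the paper's proof: both add the two variational inequalities \eqref{eq:HHNopt7} and \eqref{eq:HHNoptdisc7}, split the resulting pairing so that the term $(\bar \ad - \ad_h(\bar \st - \st_{\operatorname{d}}),\, \bar\ct_{\operatorname{vd}} - \bar\ct)_{L^2(\varOmega)}$ is bounded by $Ch\,\|\bar\ct - \bar\ct_{\operatorname{vd}}\|_{L^1(\varOmega)}$ via \eqref{eq:HHNGN52} (this is exactly where $\st_{\operatorname{d}} \in W^{1,\infty}(\varOmega)$ enters), and reduce the remaining term to $-\|\bar\st-\bar\st_h\|_{L^2(\varOmega)}^2 + (\bar\st - \st_h(\bar\ct),\, \bar\st - \bar\st_h)_{L^2(\varOmega)}$, finished by Young's inequality and \eqref{eq:HHND}. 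The only difference is presentational: you encapsulate the manipulations with the discrete mixed system in the symmetry identity $(\ad_h(s),\ct)_{L^2(\varOmega)} = (s,\st_h(\ct))_{L^2(\varOmega)}$ (an identity the paper itself derives later as its ``useful equality''), whereas the paper performs those substitutions with \eqref{eq:HHNoptdisc3}--\eqref{eq:HHNoptdisc6} inline.
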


\begin{proof}
	{
	From \eqref{eq:HHNopt7} with $\ct = \bar \ct_{\operatorname{vd}}$ and \eqref{eq:HHNoptdisc7} with $\ct = \bar \ct$ we get 
	\begin{align*}
		-(\bar \ad, \bar\ct_{\operatorname{vd}} - \bar \ct)_{L^2(\varOmega)} &\leq \alpha \left( ||\bar\ct_{\operatorname{vd}}'||_{\mathcal{M}(\varOmega)} - ||\bar \ct'||_{\mathcal{M}(\varOmega)} \right),\\
		-(\bar \ad_h, \bar \ct - \bar \ct_{\operatorname{vd}} )_{L^2(\varOmega)} &\leq \alpha \left( ||\bar \ct'||_{\mathcal{M}(\varOmega)} - ||\bar \ct_{\operatorname{vd}} '||_{\mathcal{M}(\varOmega)} \right),
	\end{align*}
	where we recall that $\bar \ad = \ad(\bar \st - \st_{\operatorname{d}})$, $\bar \ad_h = \ad(\bar \st_h - \st_{\operatorname{d}})$.
	Adding these inequalities delivers
	\begin{align*}
		0 &\le (\bar \ad - \bar \ad_h, \bar \ct_{\operatorname{vd}} - \bar \ct)_{L^2(\varOmega)} \\
		&= (\bar \ad - \ad_h(\bar \st - \st_{\operatorname{d}}),\bar \ct_{\operatorname{vd}} - \bar \ct) + (\ad_h(\bar \st - \st_{\operatorname{d}}) - \bar \ad_h,\bar \ct_{\operatorname{vd}} - \bar \ct) =: \textcolor{black}{(I) + (II)}.
	\end{align*}
	Since $\bar \st - \st_{\operatorname{d}} \in W^{1,\infty}(\varOmega)$ we with \eqref{eq:HHNGN52} obtain 
    \begin{equation*}
    	\textcolor{black}{(I)}  \leq || \bar \ad - \ad_h(\bar \st - \st_{\operatorname{d}}) ||_{L^\infty(\varOmega)} || \bar \ct_{\operatorname{vd}} - \bar \ct ||_{L^1(\varOmega)} \leq Ch \, || \bar \ct_{\operatorname{vd}} - \bar \ct ||_{L^1(\varOmega)} .
    \end{equation*}
Using \eqref{eq:HHNoptdisc3}-\eqref{eq:HHNoptdisc6}, we get
\begin{align*}
	\textcolor{black}{(II)} &= \int_{ \varOmega} -( \bar \mix_h ' -\mix_h(\bar\ct)') (\ad_h(\bar \st - \st_{\operatorname{d}}) -\bar \ad_h) + d (\bar \st_h-\st_h(\bar \ct) ) (\ad_h(\bar \st - \st_{\operatorname{d}}) -\bar \ad_h) \\
	&= \int_{ \varOmega} \frac{1}{a} (\mixad_h(\bar \st - \st_{\operatorname{d}}) -\bar \mixad_h)( \bar \mix_h - \mix_h(\bar\ct)  )  + d (\bar \st_h-\st_h(\bar \ct) ) (\ad_h(\bar \st - \st_{\operatorname{d}}) -\bar \ad_h) \\
	&= \int_{ \varOmega} - (\mixad_h(\bar \st - \st_{\operatorname{d}})' -\bar \mixad_h')(\bar \st_h - \st_h(\bar \ct) )  + d (\bar \st_h-\st_h(\bar \ct) ) (\ad_h(\bar \st - \st_{\operatorname{d}}) -\bar \ad_h)  \\
	&= \int_{ \varOmega} (\bar \st_h - \st_h(\bar \ct) ) ( \bar\st -\st_{\operatorname{d}}) - ( \bar \st_h - \st_h(\bar \ct)) (\bar \st_h -\st_{\operatorname{d}}) \\
	&=  \int_{ \varOmega}   (\bar \st_h - \bar \st + \bar \st - \st_h(\bar\ct)) (\bar\st -\bar \st_h) \\
	&= - ||\bar \st - \bar \st_h||_{L^2(\varOmega)}^2 + \int_{ \varOmega}   ( \bar \st - \st_h(\bar\ct)) (\bar\st -\bar \st_h) \\
	&\le - \frac{1}{2} ||\bar \st - \bar \st_h||_{L^2(\varOmega)}^2 + \frac{1}{2} ||\bar \st - \st_h (\bar \ct) ||_{L^2(\varOmega)}^2.
\end{align*}
Combining the estimates for \textcolor{black}{$(I)$ and $(II)$} we with \eqref{eq:HHND} and \eqref{eq:HHNGN52} obtain 
\begin{align*}
 ||\bar \st - \bar \st_h||_{L^2(\varOmega)}^2 &\le C \left( h^2 + h \, || \bar \ct_{\operatorname{vd}} - \bar \ct ||_{L^1(\varOmega)} \right). 
\end{align*} }
\end{proof}

{
	Let us note that with requiring only $\st_{\operatorname{d}} \in L^\infty(\varOmega)$ we with \eqref{eq:HHNGN54} would have obtained $\textcolor{black}{(I)} \le C h |\log h| \, || \bar \ct_{\operatorname{vd}} - \bar \ct||_{L^1(\varOmega)}$. \\
	Since the variational discrete controls $\bar \ct_{\operatorname{vd}}$ are bounded in $BV(\varOmega)$ w.r.t. $h$, we have
\begin{corollary}\label{cor:HHN}
	With the suppositions of Theorem \ref{thm:HHNst} there holds
	\begin{equation*} \label{eq:stateL2error}
		|| \bar \st - \bar \st_h||_{L^2(\varOmega)} \leq C h^{\frac{1}{2}} .
	\end{equation*}
\end{corollary}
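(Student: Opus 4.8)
The plan is to start directly from the quadratic estimate established in Theorem~\ref{thm:HHNst}, namely
$$\|\bar\st - \bar\st_h\|_{L^2(\varOmega)}^2 \le C\left(h^2 + h\,\|\bar\ct - \bar\ct_{\operatorname{vd}}\|_{L^1(\varOmega)}\right),$$
and to observe that the only quantity on the right-hand side that is not already of order $h^2$ is the $L^1$-distance $\|\bar\ct - \bar\ct_{\operatorname{vd}}\|_{L^1(\varOmega)}$. Hence it suffices to show that this distance is \emph{uniformly bounded} in $h$; then the whole right-hand side is $O(h)$, and taking square roots yields the claimed half-order rate.

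First I would apply the triangle inequality together with the elementary bound $\|\cdot\|_{L^1(\varOmega)} \le \|\cdot\|_{BV(\varOmega)}$ to get
$$\|\bar\ct - \bar\ct_{\operatorname{vd}}\|_{L^1(\varOmega)} \le \|\bar\ct\|_{BV(\varOmega)} + \|\bar\ct_{\operatorname{vd}}\|_{BV(\varOmega)}.$$
For the continuous optimal control, Theorem~\ref{thm:HHNuniquesolution} guarantees $\bar\ct \in BV(\varOmega)$, so $\|\bar\ct\|_{BV(\varOmega)}$ is a fixed finite constant independent of $h$. For the variational discrete control the decisive input is the uniform bound \eqref{eq:HHNcontrolbounddisc} of Theorem~\ref{thm:HHNexistencedisc}, which provides $\|\bar\ct_{\operatorname{vd}}\|_{BV(\varOmega)} \le C$ for all $h \in (0,h_0]$ with $C$ independent of $h$. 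Combining these two facts gives $\|\bar\ct - \bar\ct_{\operatorname{vd}}\|_{L^1(\varOmega)} \le C$ uniformly in $h$. Substituting into the estimate above then yields $\|\bar\st - \bar\st_h\|_{L^2(\varOmega)}^2 \le C(h^2 + h) \le C h$ for all $h \le \min\{1,h_0\}$, where $h^2 \le h$ is used to absorb the quadratic term, and a final square root delivers $\|\bar\st - \bar\st_h\|_{L^2(\varOmega)} \le C h^{1/2}$.

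There is in fact no genuine analytical obstacle at this stage: the corollary is an immediate consequence of Theorem~\ref{thm:HHNst} once the uniform $BV$-bound is in hand. The substantive ingredient is precisely the $h$-independent boundedness \eqref{eq:HHNcontrolbounddisc}, whose proof was carried out in Theorem~\ref{thm:HHNexistencedisc}; this is what prevents the non-squared $L^1$-term from spoiling the rate and effectively caps the achievable order at $h^{1/2}$. The only bookkeeping I would be careful about is the admissible range of $h$, ensuring that the constants and the threshold $h_0$ from Theorem~\ref{thm:HHNexistencedisc} are respected and that $h$ is additionally taken $\le 1$ so that $h^2 \le h$.
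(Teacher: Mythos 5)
Your proposal is correct and matches the paper's own argument: the paper likewise derives the corollary from Theorem \ref{thm:HHNst} by noting that the variational discrete controls are uniformly bounded in $BV(\varOmega)$ (via \eqref{eq:HHNcontrolbounddisc}), so the $L^1$-term is $O(1)$ and the right-hand side is $O(h)$. Your bookkeeping about $h\le\min\{1,h_0\}$ is a sensible (if implicit in the paper) addition.
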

}

We move on to establish an error estimate for the adjoint state.

\begin{theorem}\label{thm:HHNerroradjoint}
	{
		Let the suppositions of Theorem \ref{thm:HHNst}  be satisfied. Let $(\bar \mixad, \bar \ad) = \textcolor{black}{ (\mixad(\bar \st - \st_{\operatorname{d}}),  \ad(\bar \st - \st_{\operatorname{d}}))} $ and $(\bar \mixad_h, \bar \ad_h) = \textcolor{black}{( \mixad_h(\bar \st_h - \st_{\operatorname{d}}),  \ad_h(\bar \st_h - \st_{\operatorname{d}}))}  $. Then, we have
	\begin{align*}
		||\bar \ad - \bar \ad_h ||_{L^{\infty}(\varOmega)}  &\leq C (h + || \bar \st - \bar \st_h||_{L^2(\varOmega)}), \\ 
		||\bar \mixad - \bar \mixad_h ||_{L^{\infty}(\varOmega)} &\leq C (h + || \bar \st - \bar \st_h||_{L^2(\varOmega)}) .
	\end{align*}
}
\end{theorem}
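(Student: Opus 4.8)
The plan is to handle both errors by separating the perturbation of the right-hand side data from the genuine discretization error, using that the continuous and the discrete adjoint solution operators $g \mapsto (\mixad(g),\ad(g))$ and $g \mapsto (\mixad_h(g),\ad_h(g))$ are linear. I insert the intermediate quantity $\ad_h(\bar \st - \st_{\operatorname{d}})$, the discrete adjoint solve driven by the \emph{continuous} residual, and write
\begin{align*}
\bar \ad - \bar \ad_h = \big(\ad(\bar \st - \st_{\operatorname{d}}) - \ad_h(\bar \st - \st_{\operatorname{d}})\big) + \ad_h(\bar \st - \bar \st_h),
\end{align*}
and analogously for $\bar \mixad - \bar \mixad_h$; here the last summand uses $\ad_h(\bar \st - \st_{\operatorname{d}}) - \ad_h(\bar \st_h - \st_{\operatorname{d}}) = \ad_h(\bar \st - \bar \st_h)$ by linearity. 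The first summand is a pure discretization error with smooth data, the second is a data-perturbation term that must be controlled by $\|\bar \st - \bar \st_h\|_{L^2(\varOmega)}$.

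For the first summand the data is smooth: since $\bar \ct \in BV(\varOmega) \hookrightarrow L^\infty(\varOmega)$ forces $\bar \st \in W^{2,\infty}(\varOmega)$ and $\st_{\operatorname{d}} \in W^{1,\infty}(\varOmega)$ by the hypothesis of Theorem~\ref{thm:HHNst}, we get $\bar \st - \st_{\operatorname{d}} \in W^{1,\infty}(\varOmega)$, so the associated adjoint solution lies in $W^{3,\infty}(\varOmega)$. This is exactly the regularity demanded by \eqref{eq:HHNGN52}, which then yields $\|\ad(\bar \st - \st_{\operatorname{d}}) - \ad_h(\bar \st - \st_{\operatorname{d}})\|_{L^\infty(\varOmega)} \le C h \,\|\bar \st - \st_{\operatorname{d}}\|_{W^{1,\infty}(\varOmega)}$ and $\|\mixad(\bar \st - \st_{\operatorname{d}}) - \mixad_h(\bar \st - \st_{\operatorname{d}})\|_{L^\infty(\varOmega)} \le C h^2 |\log h|\,\|\bar \st - \st_{\operatorname{d}}\|_{W^{1,\infty}(\varOmega)}$; both are bounded by $Ch$ once I note that $\|\bar \st - \st_{\operatorname{d}}\|_{W^{1,\infty}(\varOmega)}$ is uniform in $h$, using the $BV$-bound \eqref{eq:HHNcontrolbounddisc} and the a priori bound on $\bar \st$.

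The core is the data-perturbation term $\ad_h(\bar \st - \bar \st_h)$, which I estimate in three moves. First, the discrete solve sees the data only through its $L^2(\varOmega)$-projection, so $\ad_h(\bar \st - \bar \st_h) = \ad_h(P_h \bar \st - \bar \st_h)$ with $g_h := P_h \bar \st - \bar \st_h \in P_0$. Second, I compare with the continuous solve, $\|\ad_h(g_h)\|_{L^\infty(\varOmega)} \le \|\ad(g_h)\|_{L^\infty(\varOmega)} + \|\ad(g_h) - \ad_h(g_h)\|_{L^\infty(\varOmega)}$, bounding $\|\ad(g_h)\|_{L^\infty(\varOmega)} \le C\|\ad(g_h)\|_{H^2(\varOmega)} \le C\|g_h\|_{L^2(\varOmega)}$ by $H^2$-regularity and the one-dimensional embedding $H^2(\varOmega) \hookrightarrow L^\infty(\varOmega)$, and the discretization error by \eqref{eq:HHNGN54} as $\le C h|\log h|\,\|g_h\|_{L^\infty(\varOmega)}$. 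Third, since $g_h \in P_0$ I apply the inverse estimate $\|g_h\|_{L^\infty(\varOmega)} \le C h^{-1/2}\|g_h\|_{L^2(\varOmega)}$ on the (quasi-uniform) partition, so that $h|\log h|\,\|g_h\|_{L^\infty(\varOmega)} \le C h^{1/2}|\log h|\,\|g_h\|_{L^2(\varOmega)} \le C\|g_h\|_{L^2(\varOmega)}$ because $h^{1/2}|\log h| \to 0$. Finally $\|g_h\|_{L^2(\varOmega)} \le \|P_h \bar \st - \bar \st\|_{L^2(\varOmega)} + \|\bar \st - \bar \st_h\|_{L^2(\varOmega)} \le C\big(h + \|\bar \st - \bar \st_h\|_{L^2(\varOmega)}\big)$. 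The flux term $\mixad_h(\bar \st - \bar \st_h)$ is handled identically, with $\|\mixad(g_h)\|_{L^\infty(\varOmega)} \le C\|\mixad(g_h)\|_{H^1(\varOmega)} \le C\|g_h\|_{L^2(\varOmega)}$ and the $\mixad$-part of \eqref{eq:HHNGN54}.

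I expect the main obstacle to be exactly this logarithm. A direct estimate of $\ad_h(\bar \st - \bar \st_h)$ that keeps $\|\bar \st - \bar \st_h\|_{L^\infty(\varOmega)} = O(1)$ only delivers the weaker bound $C h|\log h|$, which does not match the claimed clean factor $h$. Removing it forces the two-step reduction "project the data into $P_0$, then spend one inverse estimate", trading the factor $h^{-1/2}$ against the surplus $h^{1/2}|\log h| \to 0$ and leaving the clean norm $\|g_h\|_{L^2(\varOmega)}$. The only structural input beyond the cited estimates of Lemma~\ref{lem:HHNLemmaXX} is quasi-uniformity of the partition (already implicit in those maximum-norm results); the remainder is bookkeeping with the triangle inequality and the a priori bounds already established.
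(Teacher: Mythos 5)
Your proof is correct, and while it starts from the same first splitting as the paper (both isolate the pure discretization error $\ad(\bar \st - \st_{\operatorname{d}}) - \ad_h(\bar \st - \st_{\operatorname{d}})$ and control it by \eqref{eq:HHNGN52}), the treatment of the data-perturbation term is genuinely different. The paper never rewrites the remaining term as a single discrete solve with perturbed data; instead, for $\bar\ad$ it inserts the two pivots $P_h \ad(\bar \st - \st_{\operatorname{d}})$ and $P_h \ad(\bar \st_h - \st_{\operatorname{d}})$ and combines the superconvergence estimate \eqref{eq:HHNGN413} with the $L^\infty$-stability of $P_h$ and continuous $H^2$-regularity, while for $\bar\mixad$ it uses yet another pivot (the continuous solve at discrete data) together with \eqref{eq:HHNGN48} and the Fortin projection $\Pi_h$, and it needs the mild mesh condition $h|\log h|^2 \le 1$ to absorb the logarithms. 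You instead use linearity to reduce everything to bounding $\ad_h(\bar \st - \bar \st_h)$ and $\mixad_h(\bar \st - \bar \st_h)$, replace the data by $g_h = P_h\bar\st - \bar\st_h \in P_0$ (legitimate, since the discrete adjoint solve only sees $P_h$ of its data), and then trade the $|\log h|$ of \eqref{eq:HHNGN54} against an inverse estimate $\|g_h\|_{L^\infty(\varOmega)} \le C h^{-1/2}\|g_h\|_{L^2(\varOmega)}$. This buys a more uniform and arguably more elementary argument -- the same three moves handle both $\bar\ad$ and $\bar\mixad$, and in effect you establish the discrete stability bound $\|\ad_h(g)\|_{L^\infty(\varOmega)} \le C\|g\|_{L^2(\varOmega)}$ as a reusable byproduct -- but it costs an explicit quasi-uniformity assumption on the partition, which the paper's projection-based route never invokes (even if the cited maximum-norm estimates of Gastaldi--Nochetto may require it implicitly, you should state it as a hypothesis). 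Two cosmetic points: the appeal to the $BV$-bound \eqref{eq:HHNcontrolbounddisc} when bounding $\|\bar\st - \st_{\operatorname{d}}\|_{W^{1,\infty}(\varOmega)}$ is unnecessary, since $\bar\st$ is the $h$-independent continuous optimal state; and your applications of \eqref{eq:HHNGN54} and \eqref{eq:HHNGN52} to the adjoint system should note, as the paper tacitly does, that these state-equation estimates carry over because the adjoint mixed system has the same structure.
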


\begin{proof}
	{
		We have 
		\begin{align*}
			||\bar \ad - \bar \ad_h ||_{L^{\infty}(\varOmega)}  &= ||\textcolor{black}{ \ad(\bar \st - \st_{\operatorname{d}}) -\ad_h(\bar \st_h - \st_{\operatorname{d}}) } ||_{L^{\infty}(\varOmega)} \\
			&\le \underbrace{ ||\textcolor{black} {\ad(\bar \st - \st_{\operatorname{d}})} - \ad_h(\bar \st - \st_{\operatorname{d}}) ||_{L^{\infty}(\varOmega)} }_{=:\textcolor{black}{(I)}} + \underbrace{ || \ad_h(\bar \st - \st_{\operatorname{d}}) - \textcolor{black}{ \ad_h(\bar \st_h - \st_{\operatorname{d}}) } ||_{L^{\infty}(\varOmega)} }_{=:\textcolor{black}{(II)}}.
		\end{align*}
	With \eqref{eq:HHNGN52} we see 
	\begin{equation*}
		\textcolor{black}{(I)} \le C h || \bar \st - \st_{\operatorname{d}} ||_{W^{1,\infty}(\varOmega)} \le C h.
	\end{equation*}
	The second term can further be estimated by
	 		\begin{align*}
	 \textcolor{black}{(II)} &\le \underbrace{ || \ad_h(\bar \st - \st_{\operatorname{d}}) - P_h \textcolor{black}{ \ad(\bar \st - \st_{\operatorname{d}}) } ||_{L^{\infty}(\varOmega)} }_{=:\textcolor{black}{(II a)}} + \underbrace{ || P_h \ad(\bar \st - \st_{\operatorname{d}}) - P_h \textcolor{black}{ \ad(\bar \st_h - \st_{\operatorname{d}}) } ||_{L^{\infty}(\varOmega)} }_{=:\textcolor{black}{(II b)}} \\
	 &\qquad + \underbrace{ || P_h \ad(\bar \st_h - \st_{\operatorname{d}}) - \textcolor{black} {\ad_h(\bar \st_h - \st_{\operatorname{d}})} ||_{L^{\infty}(\varOmega)} }_{=:\textcolor{black}{(II c)}}.
	 \end{align*}
 	With \eqref{eq:HHNGN413}, the properties of the $L^2$-projection $P_h$ and \eqref{eq:HHNGN54} we deduce
 	\begin{align*}
 		\textcolor{black}{(II a)} &\le C h |\log h|  \left[ || \textcolor{black}{ \mixad (\bar \st -\st_{\operatorname{d}})} - \mixad_h (\bar \st - \st_{\operatorname{d}}) ||_{L^{\infty}(\varOmega)} + || \bar \st - \st_{\operatorname{d}} -P_h(\bar \st - \st_{\operatorname{d}}) ||_{L^{\infty}(\varOmega)  }  \right] \\
 		&\le C h |\log h|  \left[ |\log h| \, h + h \right] || \bar \st - \st_{\operatorname{d}} ||_{W^{1,\infty}(\varOmega)}.
 	\end{align*}
 	Employing the stability of $P_h$ we get 
 	\begin{align*}
 		\textcolor{black}{(II b)} &\le || \ad (\bar \st - \st_{\operatorname{d}}) - \ad (\bar \st_h - \st_{\operatorname{d}}) ||_{L^{\infty}(\varOmega)} \le C || \ad (\bar \st - \st_{\operatorname{d}}) - \ad (\bar \st_h - \st_{\operatorname{d}}) ||_{H^{1}(\varOmega)} \le C || \bar \st - \bar \st_h ||_{L^2(\varOmega)}.
 	\end{align*}
 	We know that $\bar \st_h = P_h \bar \st_h$ and $|| \st_{\operatorname{d}} - P_h \st_{\operatorname{d}} ||_{L^{\infty}(\varOmega)} \le C h || \st_{\operatorname{d}} ||_{W^{1,\infty}(\varOmega)} $. Combining this with \eqref{eq:HHNGN413} delivers
 	 	\begin{align*}
 		\textcolor{black}{(IIc)} &\le C h |\log h|  \left[ || \textcolor{black}{ \mixad (\bar \st_h -\st_{\operatorname{d}}) } - \mixad_h (\bar \st_h - \st_{\operatorname{d}}) ||_{L^{\infty}(\varOmega)} + || \bar \st_h - \st_{\operatorname{d}} -P_h(\bar \st_h - \st_{\operatorname{d}}) ||_{L^{\infty}(\varOmega)  }  \right] \\
 		&\le C h |\log h|  \left[ h |\log h|  \, || \bar \st_h - \st_{\operatorname{d}} ||_{L^{\infty}(\varOmega)} + h \, || \st_{\operatorname{d}} ||_{W^{1,\infty}(\varOmega)} \right] ,
 	\end{align*}
 where we also have used \eqref{eq:HHNGN54} in the final estimate.
 	Altogether, we for $h |\log h|^2 \le 1$ see 
 	\begin{equation*}
 			||\bar \ad - \bar \ad_h ||_{L^{\infty}(\varOmega)}  \leq C (h + || \bar \st - \bar \st_h||_{L^2(\varOmega)}).
 	\end{equation*}
 	Next, we estimate
 	\begin{align*}
 		||\bar \mixad - \bar \mixad_h ||_{L^{\infty}(\varOmega)}  &= ||\textcolor{black}{ \mixad(\bar \st - \st_{\operatorname{d}}) -  \mixad_h(\bar \st_h - \st_{\operatorname{d}})} ||_{L^{\infty}(\varOmega)} \\
 		&\le \underbrace{ ||\textcolor{black}{ \mixad(\bar \st - \st_{\operatorname{d}}) }- \mixad(\bar \st_h - \st_{\operatorname{d}}) ||_{L^{\infty}(\varOmega)} }_{=: \textcolor{black}{(III)}} + \underbrace{ || \mixad(\bar \st_h - \st_{\operatorname{d}}) - \textcolor{black}{ \mixad_h(\bar \st_h - \st_{\operatorname{d}})} ||_{L^{\infty}(\varOmega)} }_{=:\textcolor{black}{(IV)}}.
 	\end{align*}
 	Since $\bar \mixad = \bar \ad'$ and $\mixad = \ad'$ we have
 	\begin{align*}
 	 \textcolor{black}{(III)} &\le ||\textcolor{black}{ \mixad(\bar \st - \st_{\operatorname{d}})} - \mixad(\bar \st_h - \st_{\operatorname{d}}) ||_{H^{1}(\varOmega)} \le C ||\textcolor{black}{\ad(\bar \st - \st_{\operatorname{d}}) } - \ad(\bar \st_h - \st_{\operatorname{d}}) ||_{H^{2}(\varOmega)} \le C || \bar \st - \bar \st_h ||_{L^2(\varOmega)}.
 	\end{align*}
 	With \eqref{eq:HHNGN48} we get
 	\begin{align*}
 		\textcolor{black}{(IV)} &\le C \left[ \underbrace{ || \mixad (\bar \st_h - \st_{\operatorname{d}}) - \Pi_h \mixad (\bar \st_h - \st_{\operatorname{d}}) ||_{L^{\infty}(\varOmega)} }_{=:\textcolor{black}{(IVa)}} + h | \log h| \, \underbrace{ || \bar \st_h - \st_{\operatorname{d}} - P_h(\bar \st_h - \st_{\operatorname{d}}) ||_{L^{\infty}(\varOmega)} }_{=:\textcolor{black}{(IVb)}}   \right]
 	\end{align*}
 	Due to $| \varOmega| =1$ it holds
 	\begin{align*}
 		\textcolor{black}{(IV a)} \le C h || \mixad (\bar \st_h - \st_{\operatorname{d}}) ||_{W^{1,\infty}(\varOmega)} \le C h || \ad (\bar \st_h - \st_{\operatorname{d}}) ||_{W^{2,\infty}(\varOmega)} \le C h || \bar \st_h - \st_{\operatorname{d}} ||_{L^{\infty}(\varOmega)} \le C h.
 	\end{align*}
 	Also, with $\bar \st_h = P_h \bar \st_h$ we deduce 
 	\begin{equation*}
 		\textcolor{black}{(IV b)} \le Ch || \st_{\operatorname{d}} ||_{W^{1,\infty}(\varOmega)}.
 	\end{equation*}
 	Consequently, for $h$ small enough we have
 	\begin{equation*}
 				||\bar \mixad - \bar \mixad_h ||_{L^{\infty}(\varOmega)} \leq C (h + || \bar \st - \bar \st_h||_{L^2(\varOmega)}) .
 	\end{equation*} 
 }
\end{proof} 

{
\begin{corollary}
	In particular, for $h$ small enough we have with $|| \bar \st - \bar \st_h||_{L^2(\varOmega)} \leq C h^{\frac{1}{2}}$ that 
	\begin{equation*} \label{eq:adjointinftyerror}
			||\bar \ad - \bar \ad_h ||_{L^{\infty}(\varOmega)} + ||\bar \mixad - \bar \mixad_h ||_{L^{\infty}(\varOmega)}  \le C h^{\frac{1}{2}}.
	\end{equation*}
\end{corollary}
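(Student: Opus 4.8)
The plan is to obtain this corollary as an immediate consequence of the two preceding results, so essentially no new estimation is required. Theorem~\ref{thm:HHNerroradjoint} already furnishes, for $h$ small enough, the two bounds $||\bar \ad - \bar \ad_h ||_{L^{\infty}(\varOmega)} \le C(h + ||\bar \st - \bar \st_h||_{L^2(\varOmega)})$ and $||\bar \mixad - \bar \mixad_h ||_{L^{\infty}(\varOmega)} \le C(h + ||\bar \st - \bar \st_h||_{L^2(\varOmega)})$. The first step is simply to add these two inequalities, producing a combined left-hand side $||\bar \ad - \bar \ad_h ||_{L^{\infty}(\varOmega)} + ||\bar \mixad - \bar \mixad_h ||_{L^{\infty}(\varOmega)}$ controlled by $C(h + ||\bar \st - \bar \st_h||_{L^2(\varOmega)})$ after relabelling the constant.

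The second step is to insert the state error estimate supplied by Corollary~\ref{cor:HHN}, namely $||\bar \st - \bar \st_h||_{L^2(\varOmega)} \le C h^{1/2}$, which is valid under exactly the hypotheses of Theorem~\ref{thm:HHNst} assumed here. This turns the combined bound into $C(h + h^{1/2})$. Finally, for $h \in (0,1]$ one has $h \le h^{1/2}$, hence $h + h^{1/2} \le 2\,h^{1/2}$, and absorbing the factor into the constant yields the claimed estimate $||\bar \ad - \bar \ad_h ||_{L^{\infty}(\varOmega)} + ||\bar \mixad - \bar \mixad_h ||_{L^{\infty}(\varOmega)} \le C h^{1/2}$.

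The only point requiring a small amount of care is the smallness threshold on $h$: one must choose $h$ small enough simultaneously for the hypotheses of Theorem~\ref{thm:HHNerroradjoint} to be in force (in particular the condition $h|\log h|^2 \le 1$ used there) and for the monotone comparison $h \le h^{1/2}$ to hold. Both conditions are met on a common interval $(0,h_0]$, so there is no genuine obstacle; the statement is a direct corollary in the literal sense, and the $h^{1/2}$ rate is inherited entirely from the sub-optimal state estimate of Corollary~\ref{cor:HHN} rather than from any loss in the adjoint bounds themselves.
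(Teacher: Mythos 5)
Your proposal is correct and matches the paper's (implicit) argument exactly: the corollary is obtained by adding the two bounds of Theorem~\ref{thm:HHNerroradjoint}, substituting the state estimate $\|\bar \st - \bar \st_h\|_{L^2(\varOmega)} \le C h^{1/2}$ from Corollary~\ref{cor:HHN}, and absorbing $h \le h^{1/2}$ for $h \le 1$ into the constant. Your remark about choosing $h_0$ to accommodate both the $h|\log h|^2 \le 1$ condition and the comparison $h \le h^{1/2}$ is a careful touch consistent with the paper's ``for $h$ small enough'' phrasing.
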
}

\noindent It is now easy to see the following error estimate for $\bar \Phi$. 

\begin{lemma}\label{lem:HHNPhi}
	Let the suppositions of Theorem \ref{thm:HHNerroradjoint} hold. Then, we have
	\begin{equation*}
		|| \bar \Phi - \bar \Phi_h||_{L^{\infty}(\varOmega)} \leq {C(h + || \bar \st - \bar \st_h||_{L^2(\varOmega)}) }.
	\end{equation*}
\end{lemma}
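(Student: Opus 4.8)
The plan is to reduce the statement to the adjoint-state estimate from Theorem~\ref{thm:HHNerroradjoint} by exploiting that $\bar\Phi$ and $\bar\Phi_h$ are antiderivatives of $\bar\ad$ and $\bar\ad_h$, respectively. By definition $\bar\Phi(x)=\int_0^x \bar\ad(s)\,ds$ and $\bar\Phi_h(x)=\int_0^x \bar\ad_h(s)\,ds$, so the difference is simply
\[
	\bar\Phi(x)-\bar\Phi_h(x)=\int_0^x\bigl(\bar\ad(s)-\bar\ad_h(s)\bigr)\,ds \qquad \forall\, x\in\varOmega.
\]
First I would take absolute values inside the integral and extend the domain of integration to all of $\varOmega=(0,1)$, which gives the pointwise bound $|\bar\Phi(x)-\bar\Phi_h(x)|\le \int_0^1|\bar\ad-\bar\ad_h|\,ds=\|\bar\ad-\bar\ad_h\|_{L^1(\varOmega)}$ uniformly in $x$.

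Next I would pass from the $L^1$-norm to the $L^\infty$-norm. Since $|\varOmega|=1$, the embedding $\|w\|_{L^1(\varOmega)}\le\|w\|_{L^\infty(\varOmega)}$ holds trivially, so taking the supremum over $x$ yields
\[
	\|\bar\Phi-\bar\Phi_h\|_{L^\infty(\varOmega)} \le \|\bar\ad-\bar\ad_h\|_{L^1(\varOmega)} \le \|\bar\ad-\bar\ad_h\|_{L^\infty(\varOmega)}.
\]
Finally I would invoke the already-established adjoint estimate $\|\bar\ad-\bar\ad_h\|_{L^\infty(\varOmega)}\le C\bigl(h+\|\bar\st-\bar\st_h\|_{L^2(\varOmega)}\bigr)$ from Theorem~\ref{thm:HHNerroradjoint}, which is available under exactly the suppositions assumed here, to conclude the claimed bound.

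There is essentially no obstacle: the argument is just the fundamental theorem of calculus combined with the unit-measure embedding and the prior theorem. The only point to keep in mind is that both $\bar\Phi$ and $\bar\Phi_h$ are genuine antiderivatives of $\bar\ad$ and $\bar\ad_h$ with the same lower endpoint $0$, so the constant of integration cancels and no additional boundary term (in particular, the shared condition $\bar\Phi(1)=\bar\Phi_h(1)=0$) interferes. This is why the lemma can be asserted to follow immediately once the adjoint error estimate is in place.
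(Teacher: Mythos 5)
Your argument is correct: the pointwise identity $\bar\Phi(x)-\bar\Phi_h(x)=\int_0^x(\bar\ad-\bar\ad_h)\,ds$ gives $\|\bar\Phi-\bar\Phi_h\|_{L^\infty(\varOmega)}\le\|\bar\ad-\bar\ad_h\|_{L^1(\varOmega)}$, and since the lemma is stated under the suppositions of Theorem~\ref{thm:HHNerroradjoint}, you are entitled to invoke that theorem's $L^\infty$ bound after the trivial embedding $L^\infty\hookrightarrow L^1$ on the unit interval. The paper starts identically ($\|\bar\Phi-\bar\Phi_h\|_{L^\infty}\le\|\bar\ad-\bar\ad_h\|_{L^1}$) but then goes the other way with the embedding: it bounds the $L^1$-norm by the $L^2$-norm and re-estimates $\bar\ad-\bar\ad_h$ from scratch at the $L^2$ level, splitting into $\ad(\bar\st-\st_{\operatorname{d}})-\ad(\bar\st_h-\st_{\operatorname{d}})$ (controlled by $C\|\bar\st-\bar\st_h\|_{L^2}$ via continuity of the solution operator) and $\ad(\bar\st_h-\st_{\operatorname{d}})-\ad_h(\bar\st_h-\st_{\operatorname{d}})$ (controlled by $Ch$ via the mixed finite element estimate \eqref{eq:HHND} with $r=2$). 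The payoff of the paper's slightly longer route is regularity: it never touches the uniform estimates, so—as the paper remarks immediately after the lemma—$\st_{\operatorname{d}}\in L^2(\varOmega)$ suffices for this particular result, whereas your proof inherits the $\st_{\operatorname{d}}\in W^{1,\infty}(\varOmega)$ requirement built into Theorem~\ref{thm:HHNerroradjoint} (and also its implicit smallness condition on $h$). Your version buys brevity by reusing the theorem; the paper's buys a weaker hypothesis by staying self-contained at the $L^2$ level.
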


\begin{proof}
	By inserting the definitions $\bar \Phi(x) = \int_0^x \bar \ad(s) \, ds$ and $\bar \Phi_h = \int_0^x \bar \ad_h(s) \, ds$ it follows directly that 
	\begin{equation*}
		|| \bar \Phi - \bar \Phi_h||_{L^{\infty}(\varOmega)} \leq || \bar \ad - \bar \ad_h||_{L^{1}(\varOmega)},
	\end{equation*}
{
	and due to $|\varOmega|=1$, using \eqref{eq:HHND}, we also have 
	\begin{align*}
		|| \bar \ad - \bar \ad_h||_{L^{1}(\varOmega)} &\leq || \bar \ad - \bar \ad_h||_{L^2(\varOmega)}\\
		&\le ||\textcolor{black}{ \ad(\bar \st - \st_{\operatorname{d}})} - \ad(\bar \st_h - \st_{\operatorname{d}}) ||_{L^{2}(\varOmega)} + || \ad(\bar \st_h - \st_{\operatorname{d}}) - \textcolor{black} {\ad_h(\bar \st_h - \st_{\operatorname{d}})} ||_{L^{2}(\varOmega)} \\ 
		&\le C (  || \bar \st - \bar \st_h||_{L^2(\varOmega)} + h \,  || \bar \st_h - \st_{\operatorname{d}} ||_{L^{2}(\varOmega)}  )\\
		&\le C( || \bar \st - \bar \st_h||_{L^2(\varOmega)} + h). 
	\end{align*}
}
\end{proof}

\noindent {Note that in Lemma \ref{lem:HHNPhi} it is sufficient to require $\st_{\operatorname{d}} \in L^2(\varOmega)$.
Finally, we prove an error estimate for the control under the structural assumption \ref{ass:HHNassumptioncont}, which we formulate next.
}

\begin{assumption}\label{ass:HHNassumptioncont}
	{Let $\bar \ad$ denote the optimal adjoint state.}
	The set $\left\{ x \in \varOmega : \bar \ad(x) = 0 \right\}$ is finite and all roots are simple roots, i.e. if $\bar \ad(x) = 0$, then $\bar \ad'(x) \neq 0$. 
\end{assumption}

We have $\bar \Phi ' = \bar \ad$, so that the inclusion 
\begin{equation*}
\left\{ x \in \varOmega : |\bar \Phi (x)| = \alpha \right\} \subset \left\{ x \in \varOmega : \bar \ad(x) = 0 \right\}
\end{equation*}
holds and $| \bar \Phi|$ attains the value $\alpha$ only at finitely many points.
For notation purposes we set $
 \left\{ x \in \varOmega : \bar \ad(x) = 0 \right\}= \left\{ \hat x_1 , \ldots, \hat x_m\right\},$
with $m=0$ indicating that this set is empty.
\noindent From Lemma~\ref{lem:HHNsparsity} we then deduce that the support of $\bar \ct'$ is finite and we can express $\bar \ct$ as 
\begin{equation*}
	\bar \ct = \bar a + \sum_{i=1}^m \bar c^i 1_{(\hat x_i,1)},
\end{equation*}
where $\bar a \in \mathds{R}$ and $\bar c = (\bar c^1,\ldots,\bar c^m)^{\top}\in\mathds{R}^m$.

\noindent To obtain a convergence result, we in our analysis need to estimate the difference in the jump
points of the optimal control and the corresponding coefficients. We begin by analyzing the jump points across zero of the discrete adjoint state $\bar \ad_h$, which will deliver information about the support of the {finite-dimensional representation $\bar \ct_{h} \coloneqq \Upsilon_h \bar \ct_{\operatorname{vd}} $ of the} variational discrete optimal control.
{
\begin{lemma}
	Let Assumption \ref{ass:HHNassumptioncont} hold. Then there exists $h_0>0$, such that for all $h \in (0,h_0]$ Assumption \ref{ass:HHNassumption} is fulfilled.
\end{lemma}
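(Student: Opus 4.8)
The plan is to combine the uniform adjoint estimate with the simple-zero structure of $\bar\ad$. Recall that $\bar\ad_h = \sum_{i=1}^N \bar\ad_i \chi_i \in P_0$, so Assumption~\ref{ass:HHNassumption} is equivalent to $\bar\ad_i \neq 0$ for every $i=1,\ldots,N$. By Theorem~\ref{thm:HHNerroradjoint} together with Corollary~\ref{cor:HHN} we have the uniform bound $\| \bar\ad - \bar\ad_h \|_{L^\infty(\varOmega)} \le C h^{1/2}$, while Assumption~\ref{ass:HHNassumptioncont} gives that $\bar\ad \in H^2(\varOmega) \hookrightarrow C^1(\bar\varOmega)$ has only the finitely many simple zeros $\hat x_1,\ldots,\hat x_m$. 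First I would fix $\rho>0$ so small that the neighborhoods $U_j := (\hat x_j-\rho,\hat x_j+\rho)$ are pairwise disjoint and contained in $\varOmega$, that $\bar\ad$ is strictly monotone with $|\bar\ad'| \ge c_0 > 0$ on each $U_j$, and that $\epsilon_0 := \min_{\bar\varOmega \setminus \bigcup_j U_j} |\bar\ad| > 0$.

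Next I would dispose of the intervals away from the zeros. If an interval $I_i$ contains a point $\xi$ with $|\bar\ad(\xi)| \ge 2Ch^{1/2}$, then since $\bar\ad_h(\xi)=\bar\ad_i$ the uniform estimate gives $|\bar\ad_i| \ge |\bar\ad(\xi)| - \|\bar\ad-\bar\ad_h\|_{L^\infty(\varOmega)} \ge Ch^{1/2} > 0$. Choosing $h$ small enough that $2Ch^{1/2} < \epsilon_0$, this settles every interval meeting $\bar\varOmega\setminus\bigcup_j U_j$, and, using $|\bar\ad| \ge c_0\,\mathrm{dist}(\cdot,\hat x_j)$ on $U_j$, also every interval inside a $U_j$ that lies at distance at least $2Ch^{1/2}/c_0$ from $\hat x_j$. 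The only intervals whose nonvanishing is not yet clear are thus those contained in the shrinking zones $\{\,|\bar\ad| < 2Ch^{1/2}\,\}$ around the $\hat x_j$.

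On these zones I would exploit the first discrete adjoint equation \eqref{eq:HHNoptdisc5}. Testing with the hat function $e_j$ and using $\bar\ad_h \in P_0$ yields the algebraic identity $\bar\ad_{j+1}-\bar\ad_j = \int_\varOmega \tfrac{1}{a}\bar\mixad_h\, e_j$. Recalling the relation $\bar\mixad = a\bar\ad'$ encoded in \eqref{eq:HHNoptdisc5}, we have $|\bar\mixad| = a|\bar\ad'| \ge a_0 c_0 > 0$ on each $U_j$, and since $\|\bar\mixad-\bar\mixad_h\|_{L^\infty(\varOmega)} \le Ch^{1/2}$, the discrete flux $\bar\mixad_h$ keeps a fixed sign on $U_j$ for $h$ small. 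As $\tfrac1a e_j \ge 0$, the right-hand side then has one sign, so the finite sequence $(\bar\ad_i)$ is strictly monotone across $U_j$ (increasing where $\bar\ad'>0$, decreasing where $\bar\ad'<0$). Combined with the far-field step, $(\bar\ad_i)$ is strictly negative at one edge of $U_j$ and strictly positive at the other, hence changes sign exactly once in each $U_j$.

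The main obstacle is the final step. A strictly monotone real sequence crossing from negative to positive values could in principle take the value $0$ at the crossing index; geometrically this is $\hat x_j$ sitting at the midpoint of its interval, where the local average $\tfrac{1}{h_i}\int_{I_i}\bar\ad$ cancels to leading order and the crude $h^{1/2}$ rate is by itself insufficient. To rule this out I would quantify the crossing: the above identity gives the uniform step bound $|\bar\ad_{j+1}-\bar\ad_j| \ge \kappa\,\underline h$ (with $\kappa>0$ depending on $a_0,c_0,\|a\|_{L^\infty(\varOmega)}$ and $\underline h$ the minimal mesh size), which I would pair with a per-interval comparison of $\bar\ad_i$ against $\tfrac{1}{h_i}\int_{I_i}\bar\ad$ to separate the two values straddling the zero strictly away from $0$. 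Ruling out this degenerate coincidence — rather than the routine far-field and monotonicity arguments — is where the proof really turns, and the reduction via Assumption~\ref{ass:HHNassumptioncont} to the finitely many zeros $\hat x_1,\ldots,\hat x_m$ is what makes it tractable, since only $m$ crossing indices must be controlled.
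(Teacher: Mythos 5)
Your far-field and monotonicity steps are correct and coincide with the paper's own argument: outside shrinking neighborhoods of the zeros $\hat x_1,\ldots,\hat x_m$ the uniform estimate $\|\bar \ad - \bar \ad_h\|_{L^\infty(\varOmega)} \le C h^{1/2}$ (Theorem \ref{thm:HHNerroradjoint} plus Corollary \ref{cor:HHN}) keeps the values $\bar\ad_i$ away from zero, and near each $\hat x_j$ testing \eqref{eq:HHNoptdisc5} with hat functions gives $\bar\ad_{i+1}-\bar\ad_i=\int_{\varOmega}\tfrac{1}{a}\bar\mixad_h e_i\,{\rm dx}>0$, hence strict monotonicity of the values and exactly one sign change per neighborhood. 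The genuine gap is the step you yourself flag as the crux: nothing in the proposal excludes that the strictly monotone sequence takes the value exactly $0$ on the crossing interval, and the repair you sketch cannot close this. The step bound $|\bar\ad_{i+1}-\bar\ad_i|\ge\kappa\,\underline h$ is perfectly compatible with one value being zero (e.g.\ values $\ldots,-\kappa\underline h,\,0,\,\kappa\underline h,\ldots$), and the ``per-interval comparison of $\bar\ad_i$ against $\tfrac{1}{h_i}\int_{I_i}\bar\ad$'' fails for two reasons: $\bar\ad_h$ is the discrete adjoint of the discrete optimal state, not the $L^2$-projection of $\bar\ad$, so no such identity is available and the only accuracy you have near the crossing is $O(h^{1/2})$ (using the later $O(h)$ estimates would be circular, since they are derived from this very lemma), which is far larger than the step size $\sim h$; and even an exact identity would not help, because the mean of $\bar\ad$ over the crossing interval can itself vanish when $\hat x_j$ sits suitably inside it. So as a proof the proposal is incomplete at its decisive point.

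For comparison, the paper closes this step by contradiction rather than by an estimate: it shows the minus-to-plus grid point $x_{j(i)}$ is the \emph{unique} jumping point across zero in each ball, because any additional one would supply a grid point $x_{k(i)}$ with $\bar\ad_h(x_{k(i)}^-)\ge 0$ and $\bar\ad_h(x_{k(i)}^+)\le 0$, whence testing \eqref{eq:HHNoptdisc5} with the hat function $b_{x_{k(i)}}$ yields $0<\tfrac{\delta}{2a}h\le \bar\ad_h(x_{k(i)}^+)-\bar\ad_h(x_{k(i)}^-)\le 0$; the paper then asserts that the same argument also excludes $\bar\ad_h\equiv 0$ on a whole interval. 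Note, however, that the configuration you worry about --- $\bar\ad_h$ negative on $I_{k-1}$, exactly zero on $I_k$, positive on $I_{k+1}$ --- produces no grid point with that weak plus-to-minus constellation, so it is not literally reached by the paper's contradiction either; you have isolated precisely the configuration that the paper dismisses only by assertion. To complete your proof you would need either an additional structural reason excluding this degenerate case (the local relations \eqref{eq:HHNoptdisc5}--\eqref{eq:HHNoptdisc6} alone do not seem to give one), or to weaken the target: in the degenerate case one still has a single sign change located within $O\bigl(h+\|\bar\st-\bar\st_h\|_{L^2(\varOmega)}\bigr)$ of $\hat x_j$, with the jump point taken as either endpoint of $I_k$, which is what the downstream error analysis actually uses.
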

}
\begin{proof}
{Without loss of generality we assume $m \le N$.}
For $i=1,\ldots,m$ we have {$\hat x_i \in (0,1)$}, since $\bar \Phi(x) = 0$ for $x \in \{0,1 \}$ and $|\bar \Phi (\hat x_i)|=\alpha >0$. {Then, there} exists $R>0$, such that $B_R(\hat x_i) \subset \varOmega$ and all $B_R(\hat x_i)$ are pairwise disjoint. 
Outside of $\cup_i B_R(\hat x_i)$ it holds $|\bar \ad| \geq \epsilon $ for some $\epsilon >0$, {since $\bar \ad$ is continuous}. Furthermore, for an arbitrary but fixed $\hat x \in \cup_i B_R(\hat x_i)$ with $ \hat x \in (\hat x_{i1},\hat x_{i2})$, where $i1,i2 \in \{1,\ldots,m\}$, the sign of $\bar \ad$ does not change in $(\hat x_{i1},\hat x_{i2})${, because all roots are simple roots. Also, $\hat x \in \cup_i B_R(\hat x_i)$ holds for exactly one $i \in \{1,\ldots,m\}$, since $B_R(\hat x_j) \cap B_R(\hat x_k) = \emptyset$ for $j \neq k$.}

From Theorem \ref{thm:HHNerroradjoint} we can deduce
\begin{equation*}
\bar \ad_h \rightarrow \bar \ad \qquad \text{and} \qquad \bar \mixad_h \rightarrow \bar \mixad \qquad \text{uniformly in } h.
\end{equation*}
So, {we may choose $h_0 >0$, such that $|\bar \ad_h(x)| \geq \tfrac{\epsilon}{2}$ for all $h \in (0,h_0]$ and for all $x$ in the complement of $\cup_i B_R(\hat x_i)$}. Consequently, $\bar \ad_h =0$ can not hold outside of $\cup_i B_R(\hat x_i)$. 
{In order to prove that Assumption \ref{ass:HHNassumption} holds, it remains to show that $\bar \ad_h(x) = 0$ can also not hold for $x \in \cup_i B_R(\hat x_i)$.}

Now let us assume that $h_0 < \tfrac{R}{2}$, and choose $i \in \left\{ 1,\ldots, m\right\}$ arbitrary but fixed. By Assumption \ref{ass:HHNassumptioncont} we have $\bar \ad (\hat x_i) = 0$ and $\bar\mixad (\hat x_i) = \bar \ad '(\hat x_i) \neq 0$. Since $\hat x_i$ is the only root of $\bar \ad$ in $B_R(\hat x_i)$ we have $|\bar \mixad(x)| \geq \delta $ for all $x\in B_R(\hat x_i)$ and some $\delta >0$. Without loss of generality we assume $\bar \mixad (x) \geq \delta$ for all $x\in B_R(\hat x_i)$. 
So, {after a possible further reduction of $h_0$} we have $\bar \mixad_h(x) \geq \tfrac{\delta}{2}$ for all $x\in B_R(\hat x_i)$ {for all $h \in (0,h_0]$ by uniform convergence of $\bar \mixad_h$ to $\bar \mixad$}. Now we fix $x_i^-, x_i^+ \in B_R(\hat x_i)$ with 
\begin{equation*}
\bar \ad (x_i^-) <0 \qquad \text{and} \qquad \bar \ad(x_i^+) >0.
\end{equation*}
Due to the uniform convergence we also have $\bar \ad_h(x_i^-) <0$ and $\bar \ad_h(x_i^+)>0$ for $h \in (0,h_0]$, after a possible reduction of $h_0$. Thus, a grid point $x_{j(i)} \in B_R(\hat x_i)$ exists {together with points $x_{j(i)}^{\pm} \in (x_{j(i)} \pm h)$,i.e. $x_{j(i)}^- < x_{j(i)} < x_{j(i)}^+$, such that}
\begin{equation*}
\bar \ad_h (x_{j(i)}^-) <0 \qquad \text{and} \qquad \bar \ad_h(x_{j(i)}^+) >0,
\end{equation*}
{holds, i.e. $\bar \ad_h$ at $x_{j(i)}$ jumps from minus to plus.}
Next, we show that this grid point is the unique jumping point across zero of $\bar \ad_h$ in $B_R(\hat x_i)$. 
Assume there exists at least one other jumping point across zero $x_{k(i)} \neq x_{j(i)}$ in $B_R(\hat x_i)$. Then we without loss of generality 
 { may assume that $\bar \ad_h$ at $x_{k(i)}$ jumps from plus to minus (or is identically zero on $(x_{k(i)} \pm h)$). We thus find $x_{k(i)}^-$ and $x_{k(i)}^+$ in $(x_{k(i)} \pm h)$ with
 	$x_{k(i)}^- < x_{k(i)} < x_{k(i)}^+$, and}
\begin{equation*}
\bar \ad_h (x_{k(i)}^-) \geq 0 \qquad \text{and} \qquad \bar \ad_h(x_{k(i)}^+) \leq 0.
\end{equation*}
We may assume this sign constellation, since if there exists exactly one additional jumping point $x_{k(i)}$ across zero in $B_R(\hat x_i)$, this relation has to hold, and if there exist several additional jumping points, at least one of them satisfies this relation. We choose the hat function $b_{x_{k(i)}}$ with $b_{x_{k(i)}} (x_{k(i)}) = 1$ and $b_{x_{k(i)}} (x_{l}) = 0$ for all $l=1,\ldots,N$ with $l \neq k(i)$. Then we have {using \eqref{eq:HHNoptdisc5}
\begin{align*}
\tfrac{\delta}{2 a} h &\leq \int_{ \varOmega} \frac{1}{a} \bar \mixad_h b_{x_{k(i)}} \, \textcolor{black}{\rm{dx}} = -\int_{ \varOmega} \bar \ad_h b'_{x_{k(i)}} \, \textcolor{black}{\rm{dx}} = - \bar \ad_h (x_{k(i)}^-) + \bar \ad_h (x_{k(i)}^+) \leq 0
\end{align*} }
This contradicts $\delta > 0, a>0, h>0$, so we deduce that $x_{j(i)}$ is unique. Furthermore, by the arguments given this also contradicts {that} $\bar \ad_h(x) = 0$ {holds} for all $x$ on a whole interval of the partition, which is contained in $B_R(\hat x_i)$. Altogether, this implies {$\bar \ad_h(x) \neq 0$ for all $x \in \varOmega$}, i.e. Assumption \ref{ass:HHNassumption} holds {for all $h \in (0,h_0]$ with some $h_0>0$. Moreover, if $\bar \ad$ admits $m$ simple roots $\hat x_1, \ldots, \hat x_m$, $\bar \ad_h$ admits $m$ jump points $x_{j(1)}, \ldots, x_{j(m)}$.}  
\end{proof}
Finally, we want to estimate the distance $|\hat x_i - x_{j(i)}|$. {Since $\bar \ad$ is continuously differentiable} there exists $\xi \in B_R(\hat x_i)$, such that
\begin{alignat}{3}
\bar \ad'(\xi) (\hat x_i -x_{j(i)}) &= \qquad \qquad \qquad \qquad \qquad\;\; \underbrace{\bar \ad (\hat x_i)}_{=0} &&- \bar \ad(x_{j(i)}) \notag\\ 
&= \overbrace{\underbrace{\frac{ \bar \ad_h (x_{j(i)}^-)}{ \bar \ad_h (x_{j(i)}^-) - \bar \ad_h (x_{j(i)}^+) }}_{=: \tilde a} \bar \ad_h (x_{j(i)}^+) + \underbrace{\frac{- \bar \ad_h (x_{j(i)}^+)}{ \bar \ad_h (x_{j(i)}^-) - \bar \ad_h (x_{j(i)}^+) }}_{=:\tilde b} \bar \ad_h (x_{j(i)}^-)} &&- \bar \ad(x_{j(i)}) \notag \\
&=\tilde a \left( \bar \ad_h (x_{j(i)}^+) - \bar \ad(x_{j(i)})  \right) + \tilde b \left(\bar \ad_h (x_{j(i)}^-) - \bar \ad(x_{j(i)})  \right). \notag
\end{alignat}
{Here, $x_{j(i)}^\pm \in (x_{j(i)} \pm h)$ as chosen above.}
Employing $\bar \ad' = \bar \mixad$ and $\tilde b=1-\tilde a$, we get {
\begin{align*}
\delta |\hat x_i - x_{j(i)}| &\leq \bar \mixad(\xi) |\hat x_i - x_{j(i)}| \\
& = \tilde a \left| \bar \ad_h (x_{j(i)}^+) - \bar \ad(x_{j(i)})  \right| + (1-\tilde a) \left|\bar \ad_h (x_{j(i)}^-) - \bar \ad(x_{j(i)})  \right| \\
&\le \tilde a \left\{  \left| \bar \ad_h (x_{j(i)}^+) - \bar \ad(x^+_{j(i)})  \right| +  \left| \bar \ad (x_{j(i)}^+) - \bar \ad(x_{j(i)})  \right|   \right\} \\
&\quad + (1-\tilde a) \left\{ \left|\bar \ad_h (x_{j(i)}^-) - \bar \ad(x^-_{j(i)})  \right|  +  \left|\bar \ad (x_{j(i)}^-) - \bar \ad(x_{j(i)})  \right|  \right\} \\
&\le \tilde a \left\{  || \bar \ad - \bar \ad_h||_{L^\infty(\varOmega)} +  || \bar \ad ||_{W^{1,\infty} (\varOmega)} |x_{j(i)}^+ - x_{j(i)} | \right\} \\
&\quad + (1-\tilde a) \left\{  || \bar \ad - \bar \ad_h||_{L^\infty(\varOmega)} + || \bar \ad ||_{W^{1,\infty} (\varOmega)} |x_{j(i)}^- - x_{j(i)} |  \right\} \\
& \leq || \bar \ad - \bar \ad_h||_{L^\infty(\varOmega)} + C h .
\end{align*} }
{ Finally, Theorem \ref{thm:HHNerroradjoint} gives}
\begin{equation}\label{eq:HHNxdiff}
|\hat x_i - x_{j(i)}| \leq  {C (h + ||\bar \st - \bar \st_h||_{L^2(\varOmega)})}.
\end{equation}

Consequently, $\bar \ct_{h} = \Upsilon_h \bar \ct_{\operatorname{vd}}$ can be represented with $\bar a_h \in \mathds{R}$ and $\bar c_h = (\bar c_h^{j(1)}, \ldots, \bar c_h^{j(m)} )^{\top} \in \mathds{R}^m$ as follows:
\begin{equation*}
	\bar \ct_{h} = \bar a_h + \sum_{i=1}^m \bar c_h^{j(i)} 1_{(x_{j(i)},1)}.
\end{equation*}

Since $m \leq N$ and $x_{j(i)}\subset \left\{ x_i\right\}_{i=1}^N$ we can add zeros in the sum to recover the representation  
$\bar \ct_h = \bar a_h + \sum_{i=1}^N \bar c_h^i 1_{(x_i,1)}$. For now it is more convenient to work with the first representation.

Next, we estimate the differences in the jump heights and the constant coefficient.

\begin{lemma}
	Let Assumption~\ref{ass:HHNassumptioncont} hold. Then there exists $h_0>0$, such that for all $h \in (0,h_0]$ the coefficients of the optimal controls $\bar \ct = \bar a + \sum_{i=1}^m \bar c^i 1_{(\hat x_i,1)}$ and $\bar \ct_{h} = \bar a_h + \sum_{i=1}^m \bar c_h^{j(i)} 1_{(x_{j(i)},1)}$ satisfy
	{
	\begin{align}
		\sum_{i=1}^m | \bar c^i - \bar c_h^{j(i)}| &\leq C \left( h + ||\bar \st - \bar \st_h ||_{L^2(\varOmega)} \right), \label{eq:HHNcdiff}\\
		| \bar a - \bar a_h| &\leq C \left( h + ||\bar \st - \bar \st_h ||_{L^2(\varOmega)} \right), \label{eq:HHNadiff}
	\end{align}	
where $C>0$ denotes a constant independent of $h_0$.
	}
\end{lemma}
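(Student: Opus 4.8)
The plan is to reduce both coefficient estimates to a single uniform stability (coercivity) estimate for the discrete state operator on the finite-dimensional space spanned by the jump ansatz functions $1$ and $1_{(\hat x_i,1)}$, $i=1,\dots,m$, and then to control all remaining contributions by the quantities $h$ and $||\bar \st - \bar \st_h||_{L^2(\varOmega)}$ that are already at our disposal. Throughout I would use that $\bar \st_h = \st_h(\Upsilon_h \bar \ct_{\operatorname{vd}}) = \st_h(\bar \ct_h)$ by \eqref{eq:HHNUpsilon1}, that the jump coefficients are bounded, $\sum_{i=1}^m |\bar c_h^{j(i)}| = ||\bar \ct_h'||_{\mathcal{M}(\varOmega)} \le ||\bar \ct_{\operatorname{vd}}'||_{\mathcal{M}(\varOmega)} \le C$ by \eqref{eq:HHNUpsilon3} and \eqref{eq:HHNcontrolbounddisc}, and the jump-location estimate \eqref{eq:HHNxdiff}.

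First I would introduce the intermediate control $\tilde \ct_h := \bar a_h + \sum_{i=1}^m \bar c_h^{j(i)} 1_{(\hat x_i,1)}$, carrying the discrete coefficients but the continuous jump points. Then $\bar \ct - \tilde \ct_h = (\bar a - \bar a_h) + \sum_{i=1}^m (\bar c^i - \bar c_h^{j(i)}) 1_{(\hat x_i,1)}$ lies in the span of the ansatz functions, whereas $\bar \ct_h - \tilde \ct_h = \sum_{i=1}^m \bar c_h^{j(i)} (1_{(x_{j(i)},1)} - 1_{(\hat x_i,1)})$ is supported on intervals of length $|\hat x_i - x_{j(i)}|$, so $||\bar \ct_h - \tilde \ct_h||_{L^1(\varOmega)} \le C(h + ||\bar \st - \bar \st_h||_{L^2(\varOmega)})$ by the coefficient bound and \eqref{eq:HHNxdiff}. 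By linearity I would use the identity $\st_h(\bar \ct - \tilde \ct_h) = [\st_h(\bar \ct) - \st(\bar \ct)] + [\bar \st - \bar \st_h] + \st_h(\bar \ct_h - \tilde \ct_h)$ and estimate the three terms: the finite element error $||\st_h(\bar \ct) - \st(\bar \ct)||_{L^2(\varOmega)} \le Ch$ via \eqref{eq:HHND} (using $\bar \ct \in L^\infty(\varOmega)$, $\bar \st \in W^{2,\infty}(\varOmega)$); the second term is exactly the state error; and $||\st_h(\bar \ct_h - \tilde \ct_h)||_{L^2(\varOmega)} \le C(h + ||\bar \st - \bar \st_h||_{L^2(\varOmega)})$, obtained by splitting off the finite element error and bounding the continuous response $\st(\bar \ct_h - \tilde \ct_h)$ through the continuous $L^1 \to L^2$ mapping bound by $C||\bar \ct_h - \tilde \ct_h||_{L^1(\varOmega)}$. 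Altogether this gives the upper bound $||\st_h(\bar \ct - \tilde \ct_h)||_{L^2(\varOmega)} \le C(h + ||\bar \st - \bar \st_h||_{L^2(\varOmega)})$.

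The main obstacle is the matching lower bound. Since $\st_h(\bar \ct - \tilde \ct_h) = (\bar a - \bar a_h)\st_h(1) + \sum_{i=1}^m (\bar c^i - \bar c_h^{j(i)})\st_h(1_{(\hat x_i,1)})$, I would establish a uniform coercivity $||\st_h(\bar \ct - \tilde \ct_h)||_{L^2(\varOmega)}^2 \ge c\,(|\bar a - \bar a_h|^2 + \sum_{i=1}^m |\bar c^i - \bar c_h^{j(i)}|^2)$ with $c>0$ independent of $h$. The continuous solution operator is injective and the functions $1, 1_{(\hat x_1,1)}, \dots, 1_{(\hat x_m,1)}$ are linearly independent, so the Gram matrix $G$ of $\{\st(1), \st(1_{(\hat x_i,1)})\}$ is a fixed symmetric positive definite $(m+1)\times(m+1)$ matrix, depending only on the continuous problem. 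Because each response is approximated to order $h$ in $L^2(\varOmega)$ by its discrete counterpart (again by \eqref{eq:HHND}), the discrete Gram matrix $G_h$ converges entrywise to $G$, hence its smallest eigenvalue stays bounded below by $\tfrac12 \lambda_{\min}(G) > 0$ for all $h$ small enough, which yields the coercivity.

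Combining the upper and lower bounds gives $|\bar a - \bar a_h|^2 + \sum_{i=1}^m |\bar c^i - \bar c_h^{j(i)}|^2 \le C(h + ||\bar \st - \bar \st_h||_{L^2(\varOmega)})^2$. Taking square roots yields \eqref{eq:HHNadiff} directly and \eqref{eq:HHNcdiff} after the elementary inequality $\sum_{i=1}^m |\bar c^i - \bar c_h^{j(i)}| \le \sqrt{m}\,(\sum_{i=1}^m |\bar c^i - \bar c_h^{j(i)}|^2)^{1/2}$. The delicate point throughout is the \emph{uniform} coercivity: it is what rules out a degeneration of the finite-dimensional state responses as $h \to 0$, and it is the reason the jump-height and offset errors inherit the same rate as the state error.
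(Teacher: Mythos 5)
Your proof is correct, but it takes a genuinely different route from the paper. The paper proves the two estimates separately: for the jump heights it pairs $\bar \ct' - \bar \ct_h'$ against a smooth cut-off function $g$ supported near each $\hat x_i$, splits $(\bar \ct - \bar \ct_h, g')_{L^2(\varOmega)}$ using the $L^2$-projection $P_h$, and converts the projected term into state differences via the weak forms of the continuous and discrete state equations; for the constant offset $|\bar a - \bar a_h|$ it runs a longer, separate argument (adapted from the reference [21, Lemma 4.9]) that exploits the vanishing means of $\bar \ad$ and $\bar \ad_h$, the identity $\int_\varOmega \ad(\st(\ct_1))\ct_2 \,{\rm dx} = \int_\varOmega \st(\ct_1)\st(\ct_2)\,{\rm dx}$, and uniform bounds on $|\bar a_h|$ and $|\bar c_h|_1$. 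You instead treat both coefficient vectors at once: by inserting the intermediate control $\tilde \ct_h$ you place $\bar \ct - \tilde \ct_h$ in the fixed $(m+1)$-dimensional span of $\{1, 1_{(\hat x_i,1)}\}$, bound $\|\st_h(\bar \ct - \tilde \ct_h)\|_{L^2(\varOmega)}$ from above by $C(h + \|\bar \st - \bar \st_h\|_{L^2(\varOmega)})$ using \eqref{eq:HHND}, \eqref{eq:HHNxdiff} and the $L^1 \to L^2$ bound of the continuous solution operator, and from below by a uniform Gram-matrix coercivity, which holds because the continuous Gram matrix of $\{\st(1), \st(1_{(\hat x_i,1)})\}$ is positive definite (injectivity of the solution operator plus linear independence of the ansatz functions) and the discrete Gram matrix converges to it entrywise at rate $h$ with $m$ fixed. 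Your approach buys a unified, more conceptual argument — the error in all coefficients is controlled by the conditioning of a single fixed matrix — and it avoids the paper's lengthy mean-zero computation for $\bar a - \bar a_h$; the paper's approach buys locality (each jump height is recovered independently, with no global nondegeneracy input beyond injectivity at the single function $\st(1)$) and avoids the two auxiliary facts you should state explicitly: the $L^1(\varOmega) \to L^2(\varOmega)$ bound for $\ct \mapsto \st(\ct)$ (in 1D this follows by duality from $H^1(\varOmega) \hookrightarrow L^\infty(\varOmega)$ applied to the adjoint) and the eigenvalue perturbation step $\lambda_{\min}(G_h) \ge \tfrac12\lambda_{\min}(G)$ for $h$ small. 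Both proofs rest on the same prerequisites \eqref{eq:HHND}, \eqref{eq:HHNxdiff}, \eqref{eq:HHNUpsilon1}, \eqref{eq:HHNUpsilon3} and the $BV$ bounds, so your argument is admissible at this point in the paper's development.
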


\begin{proof}
	We know that there exists a $R>0$, such that the balls $B_{R}(\hat x_i)$ are contained in $\varOmega$ and are pairwise disjoint for $i=1,\ldots,m$. For every $i=1,\ldots,m$ we proceed as follows: 
	Consider a function $g \in \mathcal{C}^{\infty}_c(\varOmega)$, such that $g=1$ on $B_{\tfrac{R}{2}}(\hat x_i)$ and $g=0$ on $\bar \varOmega \setminus \cup_{i=1}^m B_{\tfrac{3}{4}R}(\hat x_i)$. For $h$ small enough we from \eqref{eq:HHNxdiff} and Corollary \ref{cor:HHN} also have $x_{j(i)} \in B_{\tfrac{R}{2}}(\hat x_i)$ for every $i=1,\ldots,m$, {where $x_{j(i)}$ denotes the unique jump point of $\bar \ad_h$ in $B_R(\hat x_i)$}. 
	We have 
	\begin{equation*}
		\bar \ct ' = \sum_{i=1}^m \bar c^i \updelta_{\hat x_i} \qquad \text{and} \qquad \bar \ct_{h} ' = \sum_{i=1}^m \bar c^{j(i)}_h \updelta_{x_{j(i)}},
	\end{equation*}
	so that by the construction of $g$, the definition of the distributional derivative, and the definition of the state equation we get for all $h \in (0,h_0]$ obtain
	\begin{align*}
		| \bar c^i - \bar c^{j(i)}_h | &= \left| \langle \bar \ct' - \bar \ct_{h}' , g \rangle_{\mathcal{M}(\varOmega),\mathcal{C}(\varOmega)} \right|\\
		&=  \left| -(\bar \ct - \bar \ct_{h}, g')_{L^2(\varOmega)} \right|\\
		&\leq \left| (\bar \ct - \bar \ct_{h}, P_h (g'))_{L^2(\varOmega)} \right| + \left| (\bar \ct - \bar \ct_{h}, g' - P_h(g') )_{L^2(\varOmega)} \right|.
	\end{align*}
	The second term can be estimated as follows:
	\begin{align*}
		\left| (\bar \ct - \bar \ct_{h}, g' - P_h(g') )_{L^2(\varOmega)} \right| &\leq || \bar \ct - \bar \ct_{h}||_{L^2(\varOmega)} ||g' - P_h(g') ||_{L^2(\varOmega)}\\
		&\leq \left( ||\bar \ct ||_{L^2(\varOmega)} + || \bar \ct_{h}||_{L^2(\varOmega)} \right) C h || g'' ||_{L^2(\varOmega)} \\
		&\leq C h,
	\end{align*}
	where we use the definition and the properties of $P_h$ together with the bounds $||\bar \ct||_{BV(\varOmega)} \leq C$ and $||\bar \ct_{h}||_{BV(\varOmega)} { = || \Upsilon_h \bar \ct_{\operatorname{vd}} ||_{BV(\varOmega)} \le  || {\bar \ct_{\operatorname{vd}} } ||_{BV(\varOmega)}} \leq C$ from Theorem~\ref{thm:HHNexistencedisc}.

	For the first term we use the definition of $a$ and the definition of $P_h$ to obtain
	
	{
	\begin{align*}
		\left| (\bar \ct - \bar \ct_{h}, P_h (g'))_{L^2(\varOmega)} \right| &= | -b(\bar \mix - \mix(\bar \ct_h), P_h(g') ) + c(\bar \st - \st(\bar \ct_h), P_h(g'))|\\
		&\le  | -b(\bar \mix, P_h(g')-g' ) + c(\bar \st , P_h(g') -g')| \\
		&\qquad +  | -b(\bar \mix - \mix(\bar \ct_h), g' ) + c(\bar \st - \st(\bar \ct_h), g')| \\
		&\le \left| (\bar \ct, P_h (g')-g')_{L^2(\varOmega)} \right| + | -b(\bar \mix - \mix(\bar \ct_h), g' )| + |c(\bar \st - \st(\bar \ct_h), g')| \\
		&\le ||\bar \ct||_{L^2(\varOmega)} || P_h(g')-g'||_{L^2(\varOmega)} + | b(g', \bar \mix - \mix(\bar \ct_h)) | \\
		&\qquad + || d||_{L^\infty(\varOmega)} ||\bar \st - \st(\bar \ct_h) ||_{L^2(\varOmega)} ||g'||_{L^2(\varOmega)}\\
		&\le Ch + \left| \int_{ \varOmega} a (\bar \st - \st(\bar \ct_h)) g''' \right| + C ||\bar \st - \st(\bar \ct_h)||_{L^2(\varOmega)} \\
		&\le C (h + ||\bar \st - \st(\bar \ct_h)||_{L^2(\varOmega)}) \\
		&\le C (h + ||\bar \st - \bar \st_h ||_{L^2(\varOmega)} + ||\bar \st_h - \st(\bar \ct_h)||_{L^2(\varOmega)})\\
		&\le C (h + ||\bar \st - \bar \st_h ||_{L^2(\varOmega)}).
	\end{align*}}
	\noindent Summarizing, we see $| \bar c^i - \bar c^{j(i)}_h | \leq {C (h + ||\bar \st - \bar \st_h ||_{L^2(\varOmega)})}$ for every $i=1,\ldots,m$, which delivers \eqref{eq:HHNcdiff}. 
	To see \eqref{eq:HHNadiff}, we can adapt the proof of \cite[Lemma 4.9.]{hafemeyer2019} to our setting.
	{In particular, we have
		\textcolor{black}{ 
	\begin{align*}
		\bar \ad - \bar \ad_h &= \ad ( \st (\bar \ct) - \st_{\operatorname{d}}) -  \ad_h ( \st_h (\bar \ct_h) - \st_{\operatorname{d}} ) \\
		&= \ad \left( \st \left(\bar a + \sum_{i=1}^m \bar c^i 1_{(\hat x_i,1)} \right)  \right) -  \ad_h \left(  \st_h \left(  \bar a_h + \sum_{i=1}^m \bar c_h^{j(i)} 1_{(x_{j(i)},1)}  \right)   \right) - \left(   \ad -  \ad_h  \right)(\st_{\operatorname{d}}) \\
		&= (\bar a - \bar a_h)  \ad ( \st(1)) + \bar a_h ( \ad ( \st) -  \ad_h ( \st_h))(1) + \sum_{i=1}^m (\bar c^i - \bar c_h^{j(i)}) \ad (\st (1_{(\hat x_i,1)})) \\
		&\quad + \sum_{i=1}^m \bar c_h^{j(i)} (\ad ( \st) - \ad_h (\st_h))(1_{(\hat x_i,1)}) + \sum_{i=1}^m \bar c_h^{j(i)} (  \ad_h (\st_h(1_{(\hat x_i,1)} - 1_{(x_{j(i)},1)})))- \left(  \ad - \ad_h  \right)(\st_{\operatorname{d}}).
	\end{align*}
}
By Theorem \ref{thm:HHNerroradjoint} the means of $\bar \ad$ and $\bar \ad_h $ vanish, so by integration we get
\textcolor{black}{
\begin{align*}
0&=	(\bar a - \bar a_h) \int_{ \varOmega} \ad ( \st(1)) \textcolor{black}{\rm{dx}} + \bar a_h \int_{ \varOmega} ( \ad ( \st) -  \ad_h ( \st_h))(1) \textcolor{black}{\rm{dx}} + \sum_{i=1}^m (\bar c^i - \bar c_h^{j(i)}) \int_{ \varOmega}  \ad ( \st (1_{(\hat x_i,1)})) \textcolor{black}{\rm{dx}} \\
&\quad + \sum_{i=1}^m \bar c_h^{j(i)} \int_{ \varOmega} ( \ad ( \st) -  \ad_h ( \st_h))(1_{(\hat x_i,1)}) \textcolor{black}{\rm{dx}} + \sum_{i=1}^m \bar c_h^{j(i)} \int_{ \varOmega}   \ad_h ( \st_h(1_{(\hat x_i,1)} - 1_{(x_{j(i)},1)})) \textcolor{black}{\rm{dx}} \\
&\quad - \int_{ \varOmega} \left(   \ad -  \ad_h  \right)(\st_{\operatorname{d}}) \textcolor{black}{\rm{dx}}.
\end{align*}
}
Let us show a useful equality for general $\ct_1,\ct_2 \in L^2(\varOmega)$:
\textcolor{black}{
\begin{align*}
	\int_{ \varOmega}  \ad ( \st (\ct_1)) \ct_2  \textcolor{black}{\rm{dx}} &\stackrel{\eqref{eq:HHNopt4}}{=} \int_{ \varOmega} -  \mix'(\ct_2)  \ad ( \st(\ct_1)) + d \st(\ct_2)  \ad ( \st (\ct_1)) \textcolor{black}{\rm{dx}} \\
	&\stackrel{\eqref{eq:HHNopt5}}{=} \int_{ \varOmega} \frac{1}{a}  \mixad (\st (\ct_1))  \mix(\ct_2) + d  \st(\ct_2)  \ad ( \st (\ct_1)) \textcolor{black}{\rm{dx}} \\
	&\stackrel{\eqref{eq:HHNopt3}}{=} \int_{ \varOmega} -  \mixad'( \st (\ct_1))  \st(\ct_2) + d  \st(\ct_2)  \ad ( \st(\ct_1)) \textcolor{black}{\rm{dx}} \\
	&\stackrel{\eqref{eq:HHNopt6}}{=} \int_{ \varOmega}  \st (\ct_1) \st (\ct_2) \textcolor{black}{\rm{dx}}.
\end{align*}
}
The same equation holds true for the discrete setting by making use of \eqref{eq:HHNoptdisc3}-\eqref{eq:HHNoptdisc6}.\\
With $\ct_1 = \ct_2 =1$ we see $\int_{ \varOmega} \textcolor{black}{ \ad (\st(1))} \textcolor{black}{\rm{dx}}  = || \textcolor{black}{ \st(1)} ||_{L^2(\varOmega)}^2$ and knowing that $\textcolor{black}{ \st (1)} \neq 0$ we get
\textcolor{black}{
\begin{align} \label{eq:HHNaminusah}
	|\bar a - \bar a_h| &\le  ||  \st (1)||_{L^2(\varOmega)}^{-2} \left( 
	 |\bar a_h| \, || ( \ad ( \st) -  \ad_h ( \st_h))(1) ||_{L^1(\varOmega)} + \sum_{i=1}^m | \bar c^i - \bar c_h^{j(i)} | \, ||  \ad ( \st (1_{(\hat x_i,1)})) ||_{L^1(\varOmega)} \right. \\
	 &\left. \quad\;\; + \sum_{i=1}^m | \bar c_h^{j(i)} | \,  || ( \ad ( \st) -  \ad_h ( \st_h))(1_{(\hat x_i,1)}) ||_{L^1(\varOmega)} + \sum_{i=1}^m | \bar c_h^{j(i)} | \,  \left| \int_{ \varOmega}  \ad_h ( \st_h(1_{(\hat x_i,1)} - 1_{(x_{j(i)},1)})) \textcolor{black}{\rm{dx}} \right|   \right. \notag \\ 
	 &\left.  \phantom{\sum_{i=1}^m} + || \left(   \ad -  \ad_h  \right)(\st_{\operatorname{d}}) ||_{L^1(\varOmega)}.
	 \right). \notag
\end{align}   
}
Using continuity of $\bar \st_h \mapsto \bar \ad_h$, $|\varOmega| =1$, and \eqref{eq:HHND} with $r=2$, we obtain
\textcolor{black}{
\begin{align*}
	 || ( \ad (  \st) -  \ad_h ( \st_h))(1) ||_{L^1(\varOmega)} &\le  || ( \ad (\st) - \ad_h ( \st))(1) ||_{L^1(\varOmega)} +  || ( \ad_h ( \st) -  \ad_h ( \st_h))(1) ||_{L^1(\varOmega)}
	 \\
	 &\le  || ( \ad -  \ad_h) ( \st (1)) ||_{L^2(\varOmega)} +  C ||  \st(1) -  \st_h (1) ||_{L^1(\varOmega)}
	 \\
	 &\le C h \,  ||  \st (1) ||_{L^2(\varOmega)} + C  ||  \st(1) -  \st_h (1) ||_{L^2(\varOmega)}\\
	 &\le C h .
\end{align*}
}
Since $1_{(\hat x_i,1)} \in L^\infty (\varOmega)$, an analogous estimation can be done using \eqref{eq:HHND} with $r=2$ for every $i$ to see that $|| \textcolor{black}{( \ad ( \st) -  \ad_h ( \st_h))(1_{(\hat x_i,1)})} ||_{L^1(\varOmega)} \le C h $.  Furthermore, with the helpful equality from above in the discrete setting, we have for every $i$
\textcolor{black}{
\begin{align*}
	\left| \int_{ \varOmega}  \ad_h ( \st_h(1_{(\hat x_i,1)} - 1_{(x_{j(i)},1)})) \textcolor{black}{\rm{dx}} \right| &= \left| \int_{ \varOmega}   \st_h(1_{(\hat x_i,1)} - 1_{(x_{j(i)},1)})   \st_h(1)  \textcolor{black}{\rm{dx}} \right| 
	\\
	&= \left| \int_{ \varOmega}   \ad_h ( \st_h(1))  (1_{(\hat x_i,1)} - 1_{(x_{j(i)},1)})) \textcolor{black}{\rm{dx}} \right| 
	\\
	&\le ||  \ad_h( \st_h(1)) ||_{L^\infty (\varOmega)} | \hat x_i - x_{j(i)} |
	\\
	&\le 2 ||  \ad( \st(1)) ||_{L^\infty (\varOmega)} | \hat x_i - x_{j(i)} |
	\\
	&\le C | \hat x_i - x_{j(i)} |,
\end{align*}
}
where we used that $h$ is chosen small enough and $\textcolor{black}{ \ad( \st(1))} \neq 0$.
Also, with $|\varOmega| =1$ and \eqref{eq:HHND} with $r=2$ we have
\begin{equation*}
	|| \textcolor{black}{\left(  \ad -  \ad_h  \right)}(\st_{\operatorname{d}}) ||_{L^1(\varOmega)} \le || \textcolor{black}{\left(   \ad -  \ad_h  \right)}(\st_{\operatorname{d}}) ||_{L^2(\varOmega)} \le C h \, ||\st_{\operatorname{d}} ||_{L^2(\varOmega)}  \le C h .
\end{equation*}
Plugging all of this into \eqref{eq:HHNaminusah}, and employing $|\bar c_h|_1 = \sum_{i=1}^m | \bar c_h^{j(i)} | $, delivers
\begin{equation*}
	|\bar a - \bar a_h | \le C h \left( |\bar a_h| +  |\bar c_h|_1 +1 \right) + C \left(  \sum_{i=1}^m | \bar c^i - \bar c_h^{j(i)} | + |\bar c_h|_1 \sum_{i=1}^m |\hat x_i - x_{j(i)} | \right).
\end{equation*}
From the definition of $\bar \ct_h$ we obtain
\begin{equation*}
	\frac{1}{2} || \bar a_h \textcolor{black}{ \st_h(1)} + \sum_{i=1}^m \bar c^{j(i)}_h \textcolor{black}{ \st_h(1_{(x_{j(i)},1)}) } - \st_{\operatorname{d}} ||^2_{L^2(\varOmega)} + \alpha |\bar c_h|_1 = J_h(\bar a_h, \bar c_h) \le J_h(0) = J(0).
 \end{equation*}
This delivers $|\bar c_h|_1 \le \frac{J(0)}{\alpha}$.
Also, for $h$ small enough
\begin{align*}
	\frac{1}{2} | \bar a_h| \, || \st(1) ||_{L^1(\varOmega)} &\le | \bar a_h | \, || \st_h(1)||_{L^1(\varOmega)} =  || \st_h(\bar a_h)||_{L^1(\varOmega)} \\
	&\le  || \st_h (\bar \ct_h) - \st_{\operatorname{d}} ||_{L^1(\varOmega)} + || \st_{\operatorname{d}} - \st_h\left( \sum_{i=1}^m \bar c_h^{j(i)}  1_{(x_{j(i)},1)} \right)||_{L^1(\varOmega)} \\
	&\le C + || \st_{\operatorname{d}} ||_{L^1(\varOmega)}  + || \sum_{i=1}^m \bar c_h^{j(i)} \st_h ( 1_{(x_{j(i)},1)} ) ||_{L^1(\varOmega)}   \\
	&\le C + \sum_{i=1}^m | \bar c_h^{j(i)} | \max_{1 \leq i \leq m} || \st_h( 1_{(x_{j(i)},1)} ) ||_{L^1(\varOmega)} \\
	&\le C (1 + \frac{2 J(0)}{\alpha} \max_{1 \leq i \leq m}  ||  \st (1_{(x_{j(i)},1)} ) ||_{L^1(\varOmega)} ) \\
	&\le C.
\end{align*}
This implies $|\bar a_h| \le C$ uniformly in $h$. 
Using \eqref{eq:HHNxdiff} and \eqref{eq:HHNcdiff}, we finally obtain
\begin{equation*}
	|\bar a - \bar a_h| \le C ( h + ||\bar \st - \bar \st_h ||_{L^2(\varOmega)}).
\end{equation*} }
\end{proof}

\noindent With the previous results we now have everything available to prove the convergence order $\mathcal{O}(h)$ for the optimal control. 
\begin{theorem} \label{thm:HHNcterror}
	Let Assumption~\ref{ass:HHNassumptioncont} hold. Then there exists $h_0>0$, such that for all $h \in (0,h_0]$ we have 
	{
	\begin{align*}
		|| \bar \ct - \bar \ct_{h} ||_{L^1(\varOmega)} &\leq C \left( h+ ||\bar \st - \bar \st_h ||_{L^2(\varOmega)} \right),\\
		|| \bar \ct' - \bar \ct_h' ||_{(W^{1,\infty}(\varOmega))'} &\leq C \left( h + ||\bar \st - \bar \st_h ||_{L^2(\varOmega)} \right).
	\end{align*}
	}
\end{theorem}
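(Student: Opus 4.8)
The plan is to read off both estimates by assembling the three component bounds already in hand, namely the jump-location estimate \eqref{eq:HHNxdiff}, the jump-height estimate \eqref{eq:HHNcdiff}, and the constant-offset estimate \eqref{eq:HHNadiff}. Using the explicit representations $\bar \ct = \bar a + \sum_{i=1}^m \bar c^i 1_{(\hat x_i,1)}$ and $\bar \ct_h = \bar a_h + \sum_{i=1}^m \bar c_h^{j(i)} 1_{(x_{j(i)},1)}$, the difference $\bar \ct - \bar \ct_h$ splits into a constant part $\bar a - \bar a_h$ and, for each $i$, a jump part which I would decompose by inserting $\pm\,\bar c^i 1_{(x_{j(i)},1)}$,
\begin{equation*}
	\bar c^i 1_{(\hat x_i,1)} - \bar c_h^{j(i)} 1_{(x_{j(i)},1)} = \bar c^i \left( 1_{(\hat x_i,1)} - 1_{(x_{j(i)},1)} \right) + \left( \bar c^i - \bar c_h^{j(i)} \right) 1_{(x_{j(i)},1)},
\end{equation*}
so as to separate the error coming from the displaced jump point from the error coming from the mismatched jump height.

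For the $L^1$-estimate I would note that $1_{(\hat x_i,1)} - 1_{(x_{j(i)},1)}$ is supported on the interval between $\hat x_i$ and $x_{j(i)}$ and takes values in $\{-1,0,1\}$, so its $L^1$-mass equals exactly $|\hat x_i - x_{j(i)}|$; since $|\varOmega| = 1$, the second indicator contributes at most $|\bar c^i - \bar c_h^{j(i)}|$. Summing over $i$ and adding $|\bar a - \bar a_h|$ then gives
\begin{equation*}
	\| \bar \ct - \bar \ct_h \|_{L^1(\varOmega)} \le |\bar a - \bar a_h| + \sum_{i=1}^m |\bar c^i|\, |\hat x_i - x_{j(i)}| + \sum_{i=1}^m |\bar c^i - \bar c_h^{j(i)}|.
\end{equation*}
The first sum is controlled by $\bigl( \max_i |\hat x_i - x_{j(i)}| \bigr) |\bar c|_1$, where $|\bar c|_1 = \| \bar \ct' \|_{\mathcal{M}(\varOmega)}$ is a fixed finite constant; applying \eqref{eq:HHNxdiff}, \eqref{eq:HHNcdiff} and \eqref{eq:HHNadiff} yields the claimed bound $C\bigl(h + \| \bar \st - \bar \st_h \|_{L^2(\varOmega)}\bigr)$.

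For the dual-norm estimate I would test $\bar \ct' - \bar \ct_h' = \sum_{i=1}^m \bar c^i \updelta_{\hat x_i} - \sum_{i=1}^m \bar c_h^{j(i)} \updelta_{x_{j(i)}}$ against an arbitrary $\varphi \in W^{1,\infty}(\varOmega)$ with $\| \varphi \|_{W^{1,\infty}(\varOmega)} \le 1$. Using the same insertion of $\pm\,\bar c^i \varphi(x_{j(i)})$,
\begin{equation*}
	\langle \bar \ct' - \bar \ct_h', \varphi \rangle = \sum_{i=1}^m \bar c^i \left( \varphi(\hat x_i) - \varphi(x_{j(i)}) \right) + \sum_{i=1}^m \left( \bar c^i - \bar c_h^{j(i)} \right) \varphi(x_{j(i)}),
\end{equation*}
where the constant offset has dropped out because the distributional derivative annihilates constants. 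Lipschitz continuity of $\varphi$ converts the first sum into at most $\sum_i |\bar c^i|\, |\hat x_i - x_{j(i)}|$, while the second is bounded by $\sum_i |\bar c^i - \bar c_h^{j(i)}|$; taking the supremum over admissible $\varphi$ and again invoking \eqref{eq:HHNxdiff} and \eqref{eq:HHNcdiff} closes the estimate.

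Since the delicate work—the jump-point localization of $\bar \ad_h$ under Assumption~\ref{ass:HHNassumptioncont} and the coefficient bounds—has already been carried out in the preceding lemmas, I expect no substantial obstacle in this final step. The only point requiring a little care is the clean separation of the two independent error sources, displacement versus height, in the indicator decomposition, together with the use of $|\varOmega| = 1$ to identify the $L^1$-mass of each displaced indicator difference with the jump-point distance $|\hat x_i - x_{j(i)}|$.
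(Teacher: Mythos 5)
Your proposal is correct and follows essentially the same route as the paper's own proof: the same insertion of $\pm\,\bar c^i 1_{(x_{j(i)},1)}$ (respectively $\pm\,\bar c^i f(x_{j(i)})$) to separate jump-displacement from jump-height error, the identification of $\| 1_{(\hat x_i,1)} - 1_{(x_{j(i)},1)} \|_{L^1(\varOmega)}$ with $|\hat x_i - x_{j(i)}|$, the Lipschitz bound on test functions for the dual-norm estimate, and the final application of \eqref{eq:HHNxdiff}, \eqref{eq:HHNcdiff}, and \eqref{eq:HHNadiff}. No gaps; the argument matches the paper's in both structure and detail.
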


\begin{proof}
	We combine $|\varOmega|=1$, \eqref{eq:HHNxdiff}, \eqref{eq:HHNcdiff}, and \eqref{eq:HHNadiff} to get
	\begin{align*}
		|| \bar \ct - \bar \ct_{h} ||_{L^1(\varOmega)} &= \int_{ \varOmega}{ \left|  \bar a - \bar a_h + \sum_{i=1}^m \left( \bar c^i 1_{(\hat x_i,1)} - \bar c_h^{j(i)} 1_{(x_{j(i)},1)} \right) \right|  \, \textcolor{black}{\rm{dx}}} \\
		&\leq | \bar a - \bar a_h| |\varOmega| + \int_{\varOmega}{ \left|  \sum_{i=1}^m \bar c^i \left( 1_{(\hat x_i,1)} - 1_{(x_{j(i)},1)} \right) \right|  \, \textcolor{black}{\rm{dx}}} + \int_{\varOmega}{ \left| \sum_{i=1}^m \left( \bar c^i - \bar c_h^{j(i)} \right) 1_{(x_{j(i)},1)} \right|  \, \textcolor{black}{\rm{dx}} }\\
		&\leq | \bar a - \bar a_h | + \sum_{i=1}^m |\bar c^i| \, \underbrace{|| 1_{(\hat x_i,1)} - 1_{(x_{j(i)},1)}||_{L^1(\varOmega)}}_{= |\hat x_i - x_{j(i)} |} + \sum_{i=1}^m | \bar c^i - \bar c_h^{j(i)} |\, ||  1_{(x_{j(i)},1)}||_{L^1(\varOmega)}\\
		&\leq {C \left( h  + ||\bar \st - \bar \st_h ||_{L^2(\varOmega)} \right).} 
	\end{align*}
	{
	Also, we have
	\begin{align*}
		|| \bar \ct' - \bar \ct_h' ||_{(W^{1,\infty}(\varOmega))'} &=  \sup_{||f||_{W^{1,\infty}(\varOmega)} \leq 1} \int_{ \varOmega} 	f \, d \left( \bar \ct' - \bar \ct_h' \right) \\
		&=  \sup_{||f||_{W^{1,\infty}(\varOmega)} \leq 1} \int_{ \varOmega} 	f \, d \left( \sum_{i=1}^m \bar c^i \updelta_{\hat x_i} - \sum_{i=1}^m \bar c^{j(i)}_h \updelta_{x_{j(i)}} \right)\\
		&=  \sup_{||f||_{W^{1,\infty}(\varOmega)} \leq 1}  \sum_{i=1}^m  \underbrace{\vphantom{\left(\bar c_h^i\right)}\bar c^i}_{\in \, \mathds{R}} \underbrace{\left(f(\hat x_i) - f(x_{j(i)}) \right)}_{\leq \,L_f |\hat x_i - x_{j(i)}|, \text{with } L_f\le 1 } + \left( \bar c^i- \bar c^{j(i)}_h \right) \underbrace{\vphantom{\left(\bar c_h^i\right)}f(x_{j(i)})}_{\leq 1}\\
		&\le  \sup_{||f||_{W^{1,\infty}(\varOmega)} \leq 1} \left\{ \max_{1 \leq i \leq m} |f(\hat x_i) - f(x_{j(i)})| \, || \bar c||_{l^1(\varOmega)} + || \bar c - \bar c_h ||_{l^1(\varOmega)} \max_{1 \leq i \leq m} | f(x_{j(i)}) |  \right\}
		\\
		&\le \sup_{||f||_{W^{1,\infty}(\varOmega)} \leq 1}  L_f \max_{1 \leq i \leq m} |\hat x_i - x_{j(i)}| \, || \bar c||_{l^1(\varOmega)} + || \bar c - \bar c_h ||_{l^1(\varOmega)}
		\\
		&\le C \left( h  + ||\bar \st - \bar \st_h ||_{L^2(\varOmega)} \right),
	\end{align*}
where $L_f \le 1$ denotes the Lipschitz constant of $f$.
	}

\end{proof}

{Combining this result with Theorem \ref{thm:HHNst}, we under the structural Assumption~\ref{ass:HHNassumptioncont} deduce improved error estimates.
\begin{lemma}
	Let Assumption \ref{ass:HHNassumptioncont}hold. Then there exists $h_0>0$, such that for all $h \in (0,h_0]$ we have the following error estimates, where $C>0$ denotes a constant independent of $h$.
	\begin{alignat}{3}
		|| \bar \st - \bar \st_h||_{L^2(\varOmega)} &\le C h, \quad\;\;\; || \bar \ad - \bar \ad_h||_{L^\infty(\varOmega)} &&\le C h , \qquad \quad\;\;\;		
		|| \bar \mixad - \bar \mixad_h||_{L^\infty(\varOmega)} &&\le C h , \notag \\ \qquad || \bar \Phi - \bar \Phi_h||_{L^\infty(\varOmega)} &\le C h , \qquad\quad\;		| \hat x_i - x_{j(i)}| &&\le Ch , 
		\qquad\qquad  \sum_{i=1}^m |\bar c^i - \bar c^{j(i)}_h | &&\le C h , \notag \\
		| \bar a - \bar a_h| &\le C h , 
		\qquad || \bar \ct - \bar \ct_h ||_{L^1(\varOmega)} &&\le C h , 
		\qquad
		|| \bar \ct' - \bar \ct_h' ||_{(W^{1,\infty}(\varOmega))'} &&\le C h . \notag
	\end{alignat}
\end{lemma}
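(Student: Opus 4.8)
The plan is to observe that every error estimate collected in this lemma has already been established in the preceding results in the sharper form $C\left(h + \|\bar\st - \bar\st_h\|_{L^2(\varOmega)}\right)$; consequently the whole statement reduces to upgrading the single state estimate to $\|\bar\st - \bar\st_h\|_{L^2(\varOmega)} \le Ch$ and then substituting this bound everywhere. Once the clean state rate is available, the adjoint bounds follow from Theorem~\ref{thm:HHNerroradjoint}, the bound on $\bar\Phi$ from Lemma~\ref{lem:HHNPhi}, the jump-point bound from \eqref{eq:HHNxdiff}, the coefficient bounds from \eqref{eq:HHNcdiff} and \eqref{eq:HHNadiff}, and the two control bounds from Theorem~\ref{thm:HHNcterror}, in each case simply by replacing the term $\|\bar\st - \bar\st_h\|_{L^2(\varOmega)}$ by $Ch$.

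To establish the state rate I would close the loop between Theorem~\ref{thm:HHNst} and Theorem~\ref{thm:HHNcterror}. First I relate the two control quantities: writing $\bar\ct_h = \Upsilon_h \bar\ct_{\operatorname{vd}}$ and combining the approximation property \eqref{eq:HHNUpsilon2} with the uniform $BV$-bound \eqref{eq:HHNcontrolbounddisc}, the triangle inequality gives
\begin{align*}
	\|\bar\ct - \bar\ct_{\operatorname{vd}}\|_{L^1(\varOmega)} \le \|\bar\ct - \bar\ct_h\|_{L^1(\varOmega)} + \|\bar\ct_h - \bar\ct_{\operatorname{vd}}\|_{L^1(\varOmega)} \le C\left(h + \|\bar\st - \bar\st_h\|_{L^2(\varOmega)}\right),
\end{align*}
where the final step uses the $L^1$-control estimate of Theorem~\ref{thm:HHNcterror} and $\|\bar\ct_h - \bar\ct_{\operatorname{vd}}\|_{L^1(\varOmega)} \le h\,\|\bar\ct_{\operatorname{vd}}'\|_{\mathcal{M}(\varOmega)} \le Ch$. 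Inserting this into Theorem~\ref{thm:HHNst} yields, with the abbreviation $E \coloneqq \|\bar\st - \bar\st_h\|_{L^2(\varOmega)}$, the self-referential inequality $E^2 \le C\left(h^2 + hE\right)$.

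The one genuine step is then the absorption argument: applying Young's inequality to the mixed term, $ChE \le \tfrac12 E^2 + \tfrac{C^2}{2}h^2$, and subtracting $\tfrac12 E^2$ leaves $E^2 \le Ch^2$, hence $E \le Ch$. I expect this absorption to be the only nontrivial point—everything else is bookkeeping—and the main thing to verify is that the constants produced along the chain Theorem~\ref{thm:HHNcterror}$\,\to\,$\eqref{eq:HHNcdiff}/\eqref{eq:HHNadiff}$\,\to\,$Theorem~\ref{thm:HHNst} are genuinely independent of $h$ (which they are, by the cited results, for $h \le h_0$), so that the self-reference is a legitimate a posteriori bootstrap rather than a hidden circular argument. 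With $E \le Ch$ in hand, back-substitution into the listed results delivers all nine estimates at rate $\mathcal{O}(h)$.
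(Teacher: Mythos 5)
Your proposal is correct and follows essentially the same route as the paper: plug the control estimate of Theorem~\ref{thm:HHNcterror} into the state estimate of Theorem~\ref{thm:HHNst}, absorb the mixed term via Young's inequality to obtain $\| \bar \st - \bar \st_h\|_{L^2(\varOmega)} \le C h$, and then back-substitute this rate into Theorem~\ref{thm:HHNerroradjoint}, Lemma~\ref{lem:HHNPhi}, \eqref{eq:HHNxdiff}, \eqref{eq:HHNcdiff}, \eqref{eq:HHNadiff} and Theorem~\ref{thm:HHNcterror}. The only (immaterial) bookkeeping difference is that the paper applies Theorem~\ref{thm:HHNst} directly to $\bar \ct_h = \Upsilon_h \bar \ct_{\operatorname{vd}}$, which is itself a solution of \eqref{eq:HHNproblemdiscrete} by Theorem~\ref{thm:HHNexistencedisc}, whereas you keep the original $\bar \ct_{\operatorname{vd}}$ and bridge the gap $\| \bar \ct_{\operatorname{vd}} - \bar \ct_h \|_{L^1(\varOmega)} \le C h$ via \eqref{eq:HHNUpsilon2} and the uniform $BV$-bound \eqref{eq:HHNcontrolbounddisc}.
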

\begin{proof}
	Since $\bar \ct_h$ is a solution of \eqref{eq:HHNproblemdiscrete} and Theorem \ref{thm:HHNst} holds for all solutions $\bar \ct_{\operatorname{vd}}$ of \eqref{eq:HHNproblemdiscrete}, we can plug the result from Theorem \ref{thm:HHNcterror} into the error estimate for the state from Theorem \ref{thm:HHNst} and see with Young's inequality 
	\begin{equation*}
		|| \bar \st - \bar \st_h||^2_{L^2(\varOmega)}  \le C (h^2 + h \, ||\bar \ct - \bar \ct_{\operatorname{vd}}||_{L^1(\varOmega)}   ) \le C (h^2 + h \, || \bar \st - \bar \st_h||_{L^2(\varOmega)} ) \le C h^2 + \frac{1}{2} || \bar \st - \bar \st_h||^2_{L^2(\varOmega)} .
	\end{equation*}
	This delivers 
	\begin{equation*}
		|| \bar \st - \bar \st_h||_{L^2(\varOmega)} \le C h. 
	\end{equation*}
	Now we can use the above inequality in Theorem \ref{thm:HHNerroradjoint}, Lemma \ref{lem:HHNPhi}, \eqref{eq:HHNxdiff}, \eqref{eq:HHNcdiff}, \eqref{eq:HHNadiff} and Theorem \ref{thm:HHNcterror} to derive the remaining error estimates. 
\end{proof}
}

\section{Computational results} \label{sec:HHNCompRes}

We can represent the mixed formulation of the discrete state equation \eqref{eq:HHNPDEmixedweakdiscrete} by the following matrix equation:

\begin{equation} \label{eq:HHNmatrixeq}
	\begin{pmatrix} 
		A & B \\
		B^{\top} & {D}
	\end{pmatrix}
	\begin{pmatrix}
		\textbf{\mix} \\
		\textbf{\st}
	\end{pmatrix}
	= 
	\begin{pmatrix}
		0 \\
		-\textbf{\ct} 
	\end{pmatrix},
\end{equation}
{
where for the given grid $0 = x_0 < x_1 < \ldots < x_N =1$ with associated spaces $P_0 = \Span\{ \chi_i : 1 \le i \le N \}$ and $P_1 = \Span\{ e_j : 0 \le j \le N \}$ the matrix entries are given by 
\begin{align*}
	A &= (a_{i,j})_{i,j=0}^N,               &&a_{i,j} = a(e_i,e_j), \\
	B &= (b_{i,j})_{i=0, j=1}^N,           &&b_{i,j} = b(e_i, \chi_j), \\
	D &= (d_{i,j})_{i,j=1}^N, 				 &&d_{i,j} = c(\chi_i, \chi_j).
\end{align*}
Then $A$ is symmetric positive definite, $D$ is symmetric positive semi definite and $B$ has rank $N$ (given $\ker B = \mathbb{1}$).}
$A \in \mathds{R}^{(N+1) \times (N+1)}$ and $B\in \mathds{R}^{(N+1) \times N}$ have the entries
\begin{equation*}
	A = \begin{pmatrix}
		\tfrac{1}{3} h_1 & \tfrac{1}{6} h_1         & 0                   & \ldots                    & 0 \\
		\tfrac{1}{6} h_1 & \tfrac{1}{3} (h_1 + h_2) & \tfrac{1}{6} h_2     &  \ddots                   & \vdots \\
		0               & \ddots                  & \ddots              & \ddots                    &   0     \\
		\vdots         & \ddots                  & \tfrac{1}{6} h_{N-1} & \tfrac{1}{3} (h_{N-1}+h_N) & \tfrac{1}{6} h_N \\
		0               & \ldots                  &  0                  &  \tfrac{1}{6} h_N          & \tfrac{1}{3} h_N
	\end{pmatrix}
	, \quad
	B = \begin{pmatrix}
		-1     & 0      & \ldots & 0 \\
		1      & - 1    &        & \vdots \\
		0      & \ddots & \ddots & 0     \\
		\vdots &        & \ddots & -1 \\
		0      & \ldots & 0      & 1
	\end{pmatrix}
	.
\end{equation*}
The vectors contain the coefficients $\textbf{\mix} = \begin{pmatrix}
\mix_0, \ldots, \mix_N
\end{pmatrix}^{\top} \in \mathds{R}^{N+1}, 
\textbf{\st} = \begin{pmatrix}
\st_1, \ldots, \st_N
\end{pmatrix}^{\top} \in \mathds{R}^N, $ and the evaluation of the BV-function
$\textbf{\ct} = \begin{pmatrix}
\ct_1, \ldots, \ct_N
\end{pmatrix}^{\top} \in \mathds{R}^N$,  where $\ct_j := \int_{x_{j-1}}^{x_j} \ct $ for $j=1,\ldots,N$. With our knowledge about the structure of $\ct$ we obtain $\ct_j = (a_h + \sum_{i=1}^{j-1} c_h^i) \,h_j$ for $j=1,\ldots,N$, where $h_j = x_{j} - x_{j-1}$.

We use \eqref{eq:HHNmatrixeq} to get $\textbf{\mix} = -A^{-1} B\textbf{\st}$ and {$ (B^{\top} A^{-1}  B + D) \textbf{\st} = \textbf{\ct}$, so that $\textbf{\st} = (B^{\top}A^{-1}B + D)^{-1}  \textbf{\ct}$.} Then we insert this into \eqref{eq:HHNfindimproblem} and obtain: 
\begin{equation}	\label{eq:HHNfindimproblem2} 
	\min_{a_h \in \mathds{R},  c_h \in \mathds{R}^{N-1}} f(a_h,c_h) :=\tfrac{1}{2} \| {(B^{\top}A^{-1}B +D)^{-1}} \textbf{\ct} - \st_{\operatorname{d}} \|_{L^2(\varOmega)}^2 + \alpha \sum_{i=1}^{N-1} |c_h^i|,
	\tag{$\hat P_h$}
\end{equation}
where $\textbf{\ct} = \textbf{\ct}(a_h,c_h) $ {is now considered a function of $a_h$ and $c_h$.}
%

\subsection{Optimization algorithm}
{It follows from our variationally discrete approach
that the support of $\bar \ct_{h}'$ is a subset of the grid points $\left\{ x_i\right\}_{i=1}^N$, so we don't need to approximate the support like e.g. needed to be done in the classical fully discrete approach with piecewise constant controls in \cite{hafemeyer2019}.}
We start the algorithm with an empty support set and then update the set of support points in each outer iteration, where we will determine the grid points, at which the control is actually supported. 

We define $m_{k}$ as the cardinality of support points in iteration $k$ and $t_k$ the sorted vector of all support points in iteration $k$.
The outer iteration should be terminated if the support points satisfy
\begin{equation}
	m_k = m_{k-1} \qquad \textrm{and} \qquad \| t_k - t_{k-1}\|_2 \leq \epsilon.
	\label{eq:HHNtermination}
	\tag{$T_1$}
\end{equation}
Here, the second condition only needs to be checked if the first condition is fulfilled, to ensure that the support points are identical in both iterations. In \cite{hafemeyer2019} cycling of the outer iteration is reported. We also observe this in our numerical experiments. We note that in \cite{trautmannwalter2021} a solution strategy is proposed which seems to avoid cycling similar minimization problems. To detect cycling we insert a second set of termination conditions:
\begin{equation}
	m_k = m_{k-1} = m_{k-2} \qquad \textrm{and} \qquad \| t_k - t_{k-2} \|_2 \leq \epsilon \qquad \textrm{and} \qquad f^k < f^{k-1},
	\label{eq:HHNtermination2}
	\tag{$T_2$}
\end{equation}
where $f^k := f(a_h^k, c_h^k,r^k,s^k)$.
This leads to the following algorithm for the numerical solution of \eqref{eq:HHNfindimproblem2}:\\

\begin{algorithm}[H]
	\DontPrintSemicolon
	\SetKwInOut{Input}{input}
	\SetKwInOut{Output}{output}
	
	\Input{$m_0 \in \mathds{R}, t_0 \in \mathds{R}^{m_0}, \epsilon >0$}
	\For{$k=0,1,\ldots$}    
	{ 
		\If{ \eqref{eq:HHNtermination} or \eqref{eq:HHNtermination2} holds}
		{
			$m:= m_k$, $\bar x_h := t_k$, \; 
			extract $(\bar a_h, \bar c_h)$ from $\ct_h^k$ \; 
			STOP
		}
		
		Obtain $(\ct_h^k,\st_h^k,\ad_h^k) $ by solving \eqref{eq:HHNfindimproblem2} {on $t_k$.} \;
		Compute $t_{k+1} \in \mathds{R}^{m_{k+1}}$ from $\ad_h^k$.
	}
	\Output{$\bar x_h \in \mathds{R}^m, (\bar a_h, \bar c_h) \in \mathds{R}^{m+1}$}
	\caption{}
\end{algorithm}
$\,$\newline
\noindent We initialize our algorithm with $\bar a_h = 0 , \bar c_h = \left\{ \right\}, \epsilon=10^{-10}$ and solve \eqref{eq:HHNfindimproblem2} using the MATLAB routine 'fmincon' with the following choices: Algorithm: 'active-set'; MaxFunctionEvaluations: $10^5$; MaxIterations: $10^4$; FunctionTolerance: $10^{-12}$.

\subsection{Numerical Examples}
{
As our first example, we consider the setting of \cite[5.3. Example 1]{hafemeyer2019} with known solution, i.e. we set $a=1, d=0$. The following choices satisfy the optimality conditions as stated in Theorem~\ref{thm:HHNoptconddisc}:}
\begin{itemize}
	\item $c := 12 - 4 \sqrt{8}$; $\; x_c := \tfrac{1}{2 \pi} \arccos (\tfrac{c}{4})$; $\; \alpha := 10^{-5}$;
	\item $\bar \ct := 0.5 + 1_{(x_c,1)} - 2 \cdot 1_{(0.5,1)} + 1.5 \cdot 1_{(1-x_c,1)}$;
	\item $\bar \st := \bar \st(\bar \ct)$;
	\item $\bar \Phi (x) := \tfrac{\alpha}{2 c} \left[ (1-\cos (4 \pi x)) - c(1-\cos (2 \pi x)) \right]$;
	\item $\bar \ad := \bar \Phi '$ ;
	\item $\st_{\operatorname{d}} := \bar \st + \bar \ad ''$.
\end{itemize}
In Figure \ref{fig:HHNex1fine} the approximated solutions on a grid with $h = \tfrac{1}{2048}$ are depicted. 

\begin{figure}[ht]
	\begin{center}
		\setlength{\tabcolsep}{1pt}
		\begin{tabular}{ |c c c c|}
			\hline 
			Control & State & Adjoint State & Multiplier \\
			\raisebox{-0\height}{\includegraphics[width=0.24\textwidth]{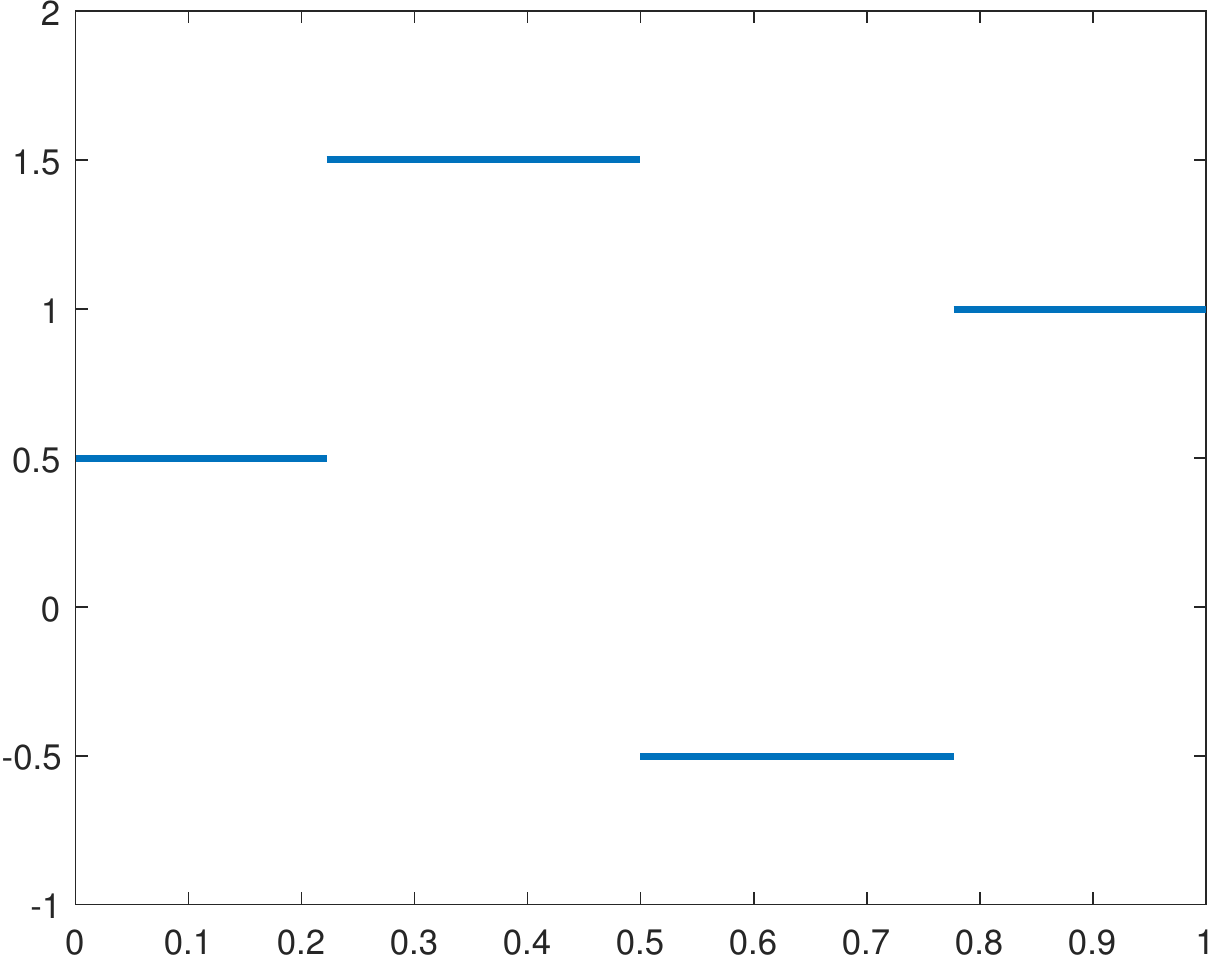}} 
			&
			\raisebox{-0\height}{\includegraphics[width=0.24\textwidth]{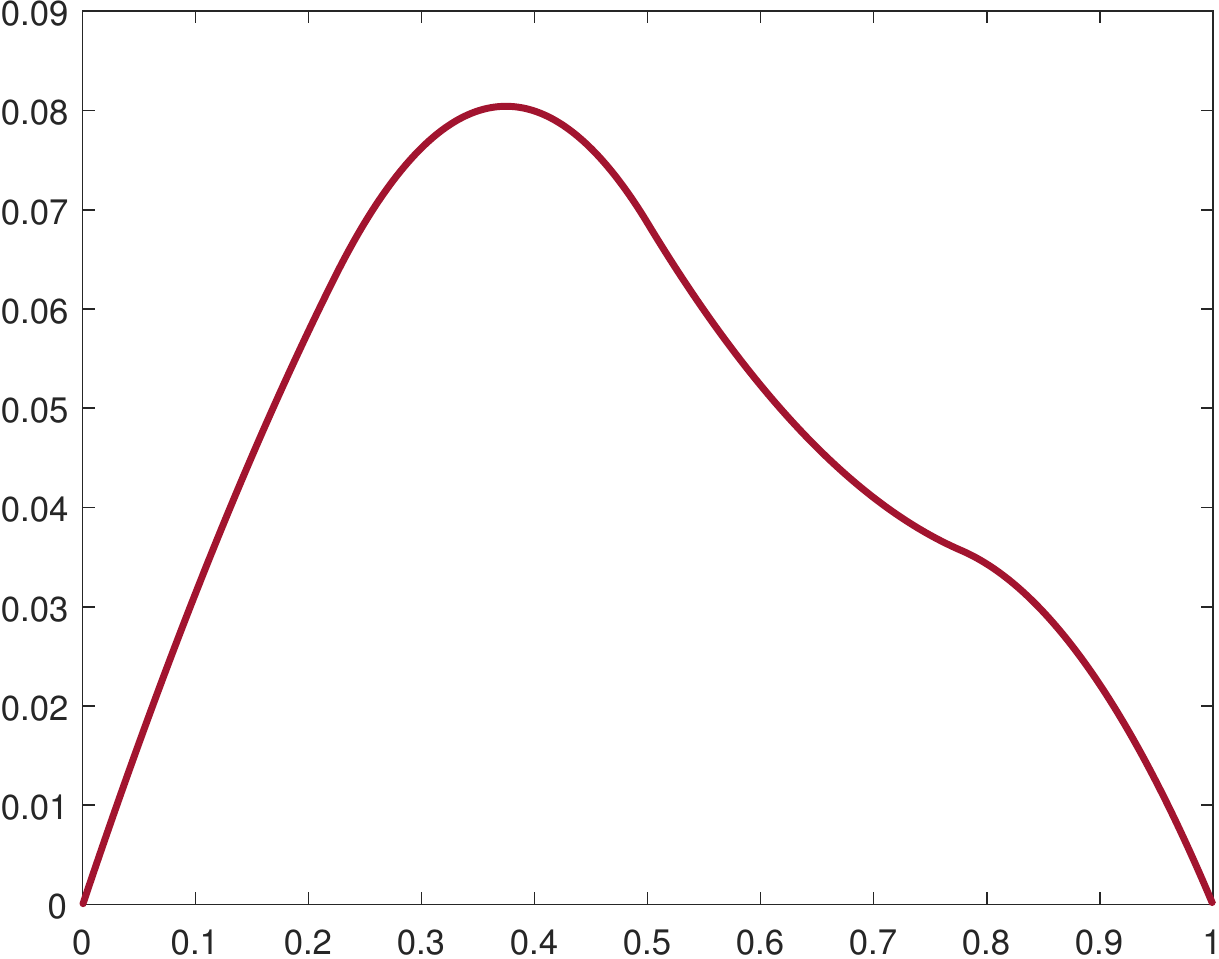}}
			& 
			{\includegraphics[width=0.24\textwidth]{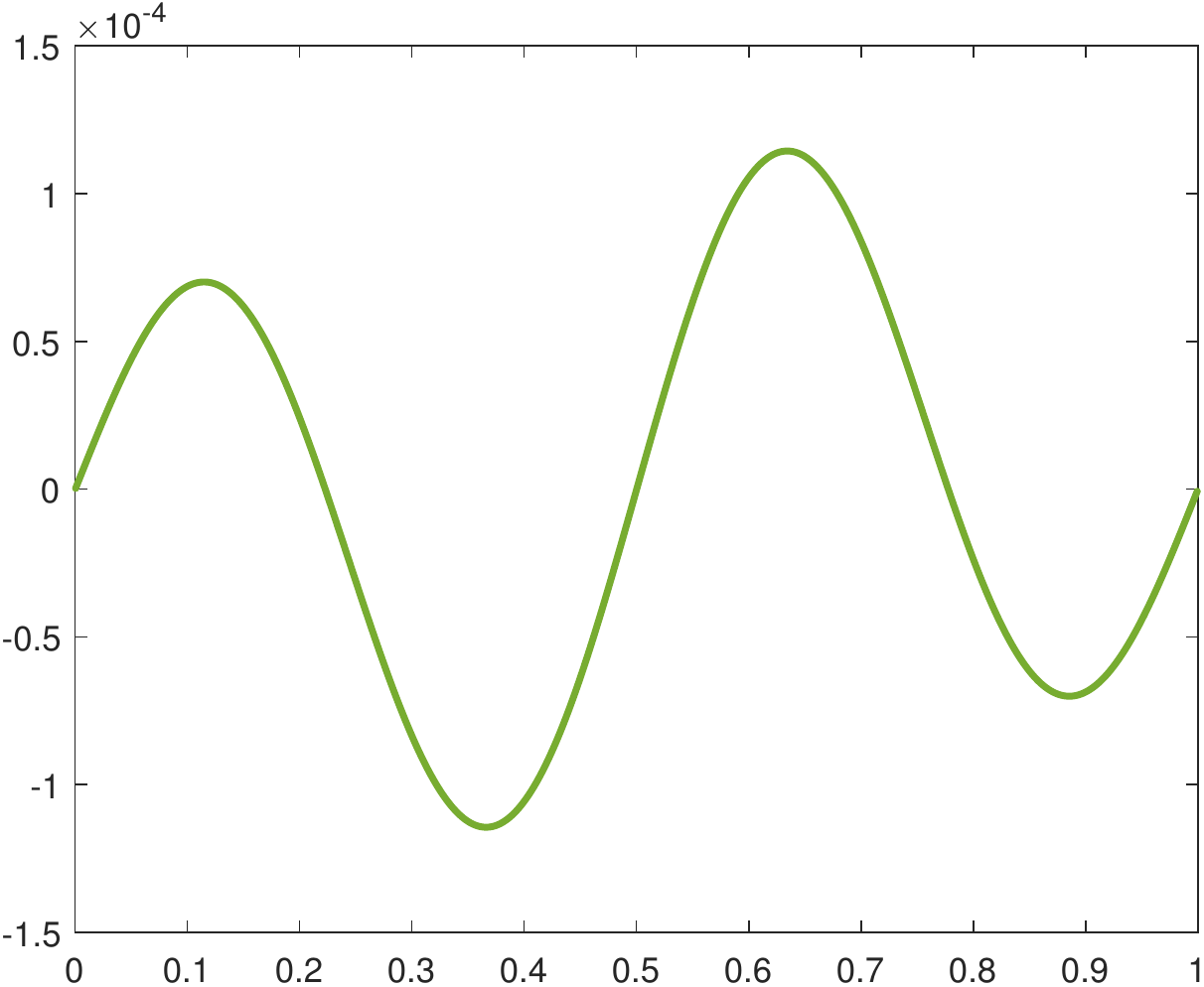}}
			&\raisebox{-0\height}{\includegraphics[width=0.24\textwidth]{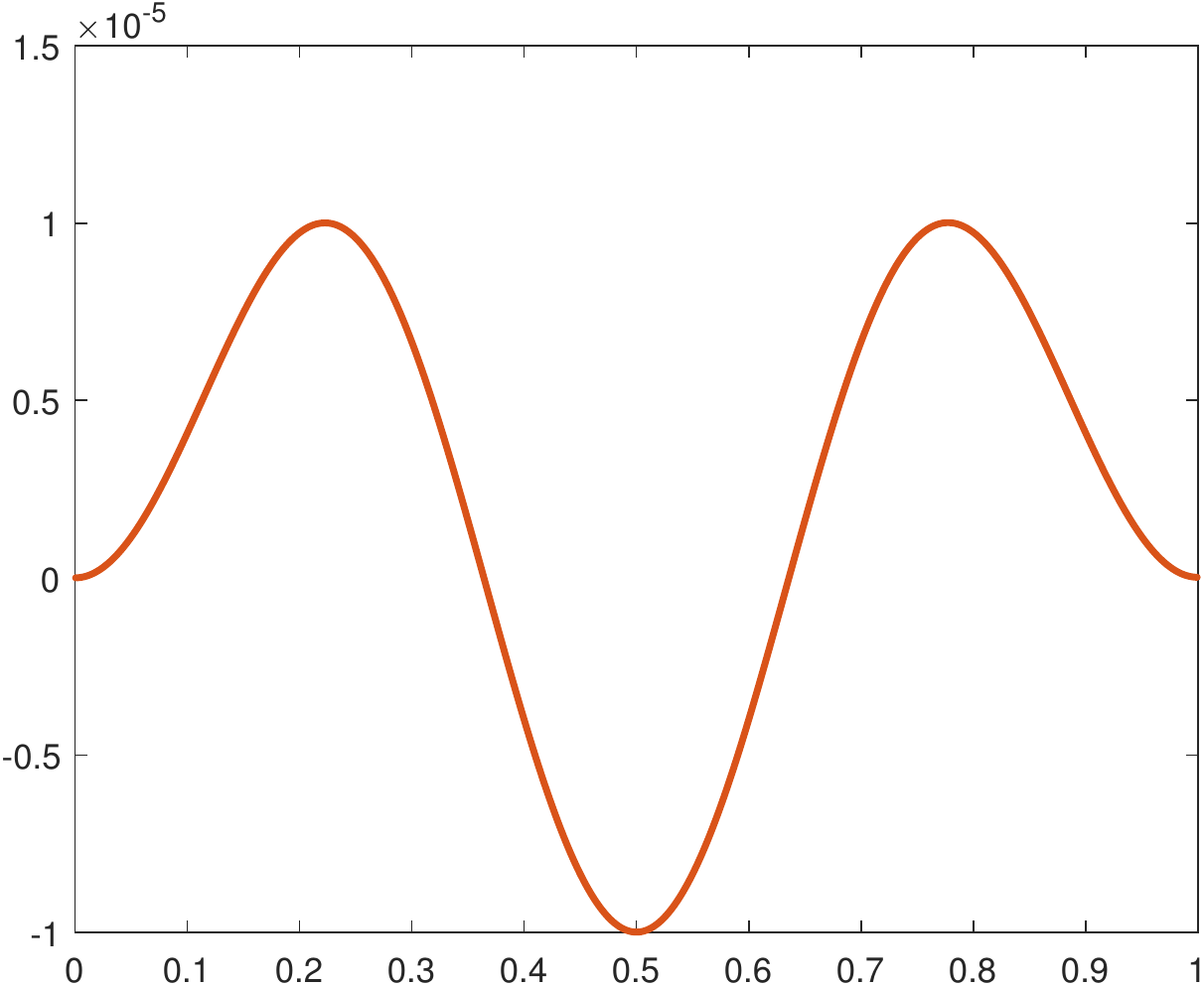}}
			\\
			(A) $\bar \ct_h $ & (B) $\bar \st_h$ & (C) $\bar \ad_h$ & (D) $\bar \Phi_h$ \\
			\hline 
		\end{tabular}
	\end{center}
	\caption{The variationally discrete solution to the data from Example 1 for $h = \tfrac{1}{2048}$. The inclusions in \eqref{eq:HHNsparsedisc3} are clearly visible.}
	\label{fig:HHNex1fine}
\end{figure}

In Figure \ref{fig:HHNex1conv} the errors between the known solutions and the solutions to the variationally discretized problem are displayed. We observe that the order of convergence is approximately $h$, except for $|| \bar \ct - \bar \ct_h||_{L^2(Q)}$, which as expected converges only with the half rate.

\begin{figure}
	\begin{center}
		\includegraphics[width=0.8\textwidth]{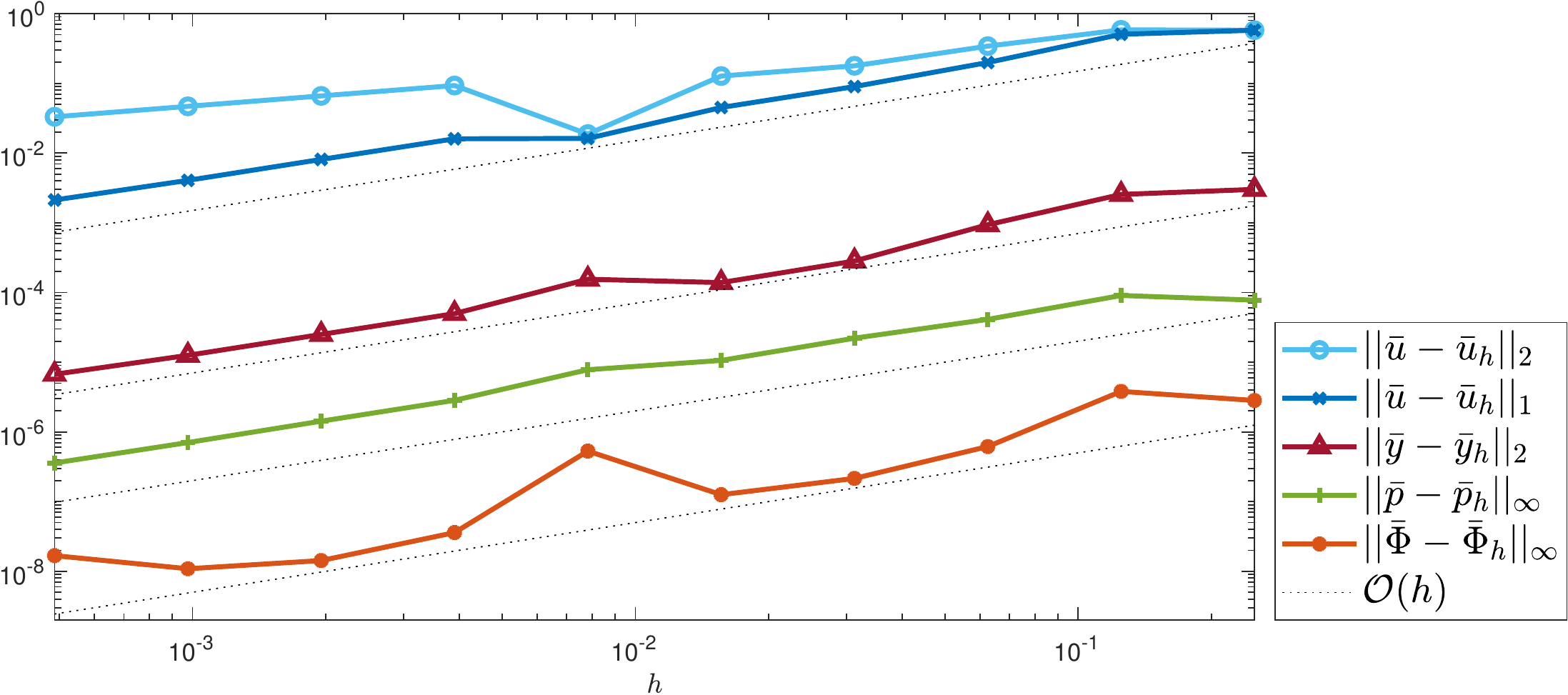}
		\caption{Example 1: Convergence plots of the errors of the solutions to the variationally discrete problem compared to the known exact solution.}
		\label{fig:HHNex1conv}
	\end{center}
\end{figure}
In addition to plotting the errors, we also calculate the convergence order $h^\alpha$ for the refinement from some gridsize $h_1$ to some other gridsize $h_2$, see Table \ref{tab:HHNex1}, by 
\begin{equation*}
	\alpha = \frac{\log(\frac{e_{h_1}}{e_{h_2}})}{\log(\frac{h_1}{h_2})},
\end{equation*}
where $e_{h_1}$ and $e_{h_2}$ act as placeholders for the different errors we are examining, in particular: $|| \bar \ct - \bar \ct_h||_{L^1(Q)}$, $|| \bar \ct - \bar \ct_h||_{L^2(Q)}, || \bar \st - \bar \st_h||_{L^2(Q)}, || \bar \ad - \bar \ad_h||_{L^{\infty}(Q)}$, and $|| \bar \Phi - \bar \Phi_h||_{L^{\infty}(Q)}$.

\begin{table}[ht]
	\begin{tabular}[h]{l|l|l|l|l|l|l} 
		$h_1$ & $h_2$           & $|| \bar \ct - \bar \ct_h||_{L^1}$     & $|| \bar \ct - \bar \ct_h||_{L^2}$     & $|| \bar \st - \bar \st_h||_{L^2}$     & $|| \bar \ad - \bar \ad_h||_{L^{\infty}}$   & $|| \bar \Phi - \Phi_h||_{L^{\infty}}$ \\
		\hline
		\hline
		0.2500 & 0.1250           & 0.1943 & -0.0171 & 0.2403 & -0.2224 & -0.4324 \\
		\hline
		0.1250 & 0.0625         & 1.3436 & 0.7759 & 1.4444 & 1.1389 & 2.6278 \\
		\hline
		0.0625 & 0.0313       & 1.1471 & 0.9368 & 1.7284 & 0.8966 & 1.5183 \\
		\hline
		0.0313 & 0.0156      & 0.9982 & 0.4874 & 1.0286 & 1.0597 & 0.7761 \\
		\hline
		0.0156 & 0.0078   & 1.4732 & 2.7648 & -0.1603 & 0.4420 & -2.0774 \\
		\hline
		0.0078 & 0.0039 & 0.0178 & -2.3127 & 1.6393 & 1.4590 & 3.8838 \\
		\hline
		0.0039 & 0.0020 & 0.9832 & 0.4948 & 0.9920 & 0.9936 & 1.3328 \\ 
		\hline
		0.0020 & 0.0010 & 0.9975 & 0.4975 & 0.9957 & 1.0184 & 0.3887 \\
		\hline
		0.0010 & 0.0005 & 0.9353 & 0.4984 & 0.9025 & 0.9738 & -0.6235 \\
		\hline
		\hline
		&           mean & 0.8989 & 0.4584 & 0.9790 & 0.8622 & 0.8216 \\
		\hline
		& slope of best fit& 0.9307 & 0.4854 & 1.0089 & 0.9241 & 0.9608
	\end{tabular}
	\caption{Example 1: Convergence order (potency of gridsize $h$) of the respective errors when the grid is refined from gridsize $h_1$ to gridsize $h_2$. We remark that the gridsizes are rounded.}
	\label{tab:HHNex1}
\end{table}

\noindent As a second example we use \cite[5.4. Example 2]{hafemeyer2019} with unknown solution, $\alpha = 10^{-5}$ and \linebreak$\st_{\operatorname{d}}(x):= 0.5 \pi^{-2}\left( 1- \cos(2 \pi x)\right) $.

Since the solution is not known, we calculate a reference solution on the finest grid with $h = \tfrac{1}{1024}$. The results displayed in Figure \ref{fig:HHNex2fine} are then used to approximate $\bar \ct, \bar \st, \bar \ad, \bar \Phi$ for the calculation of the errors.

\begin{figure}
	\begin{center}
		\setlength{\tabcolsep}{1pt}
		\begin{tabular}{|c c c c|}
			\hline	
			Control & State & Adjoint state & Multiplier \\ \raisebox{-0\height}{\includegraphics[width=0.24\textwidth]{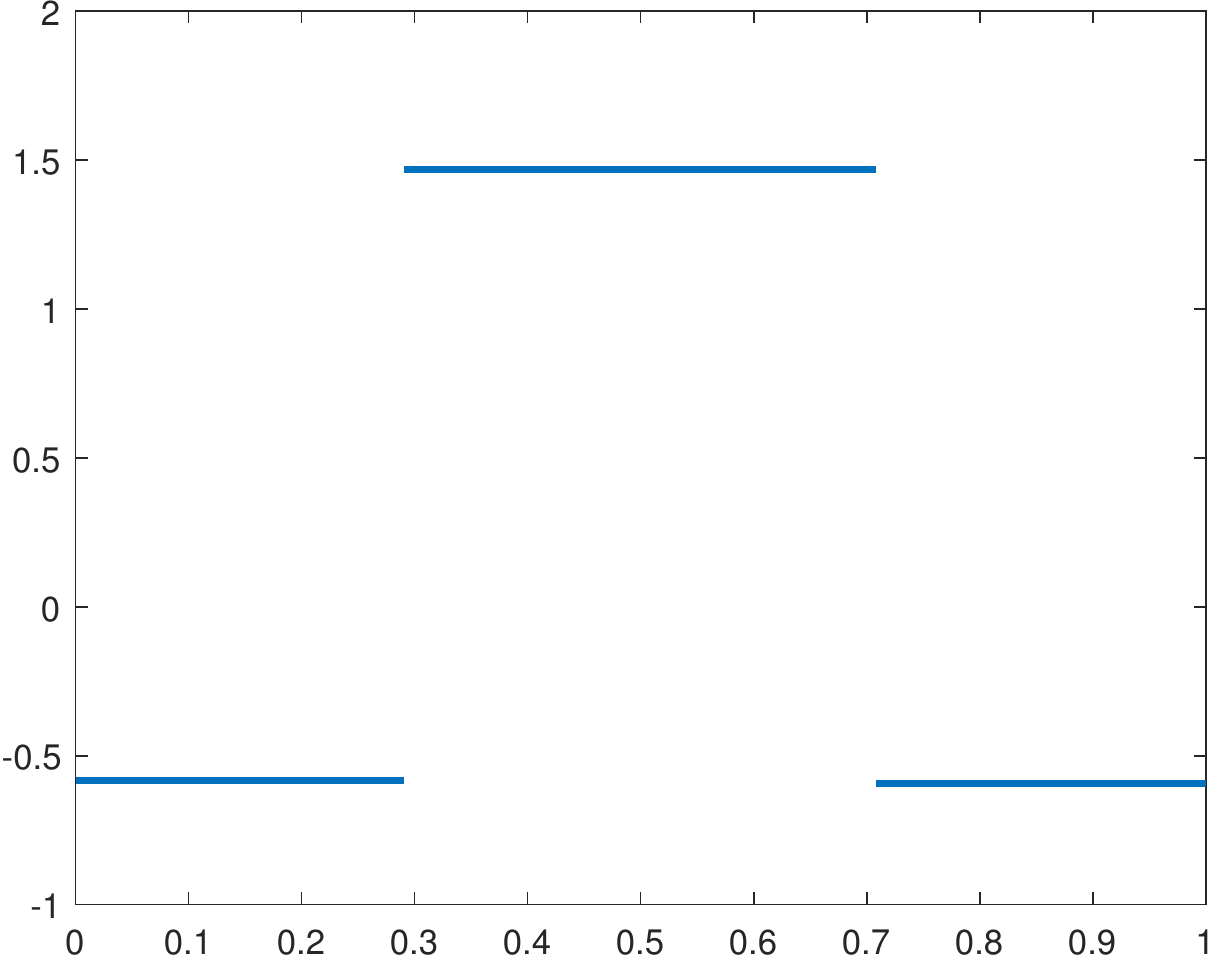}}
			& \raisebox{-0\height}{\includegraphics[width=0.24\textwidth]{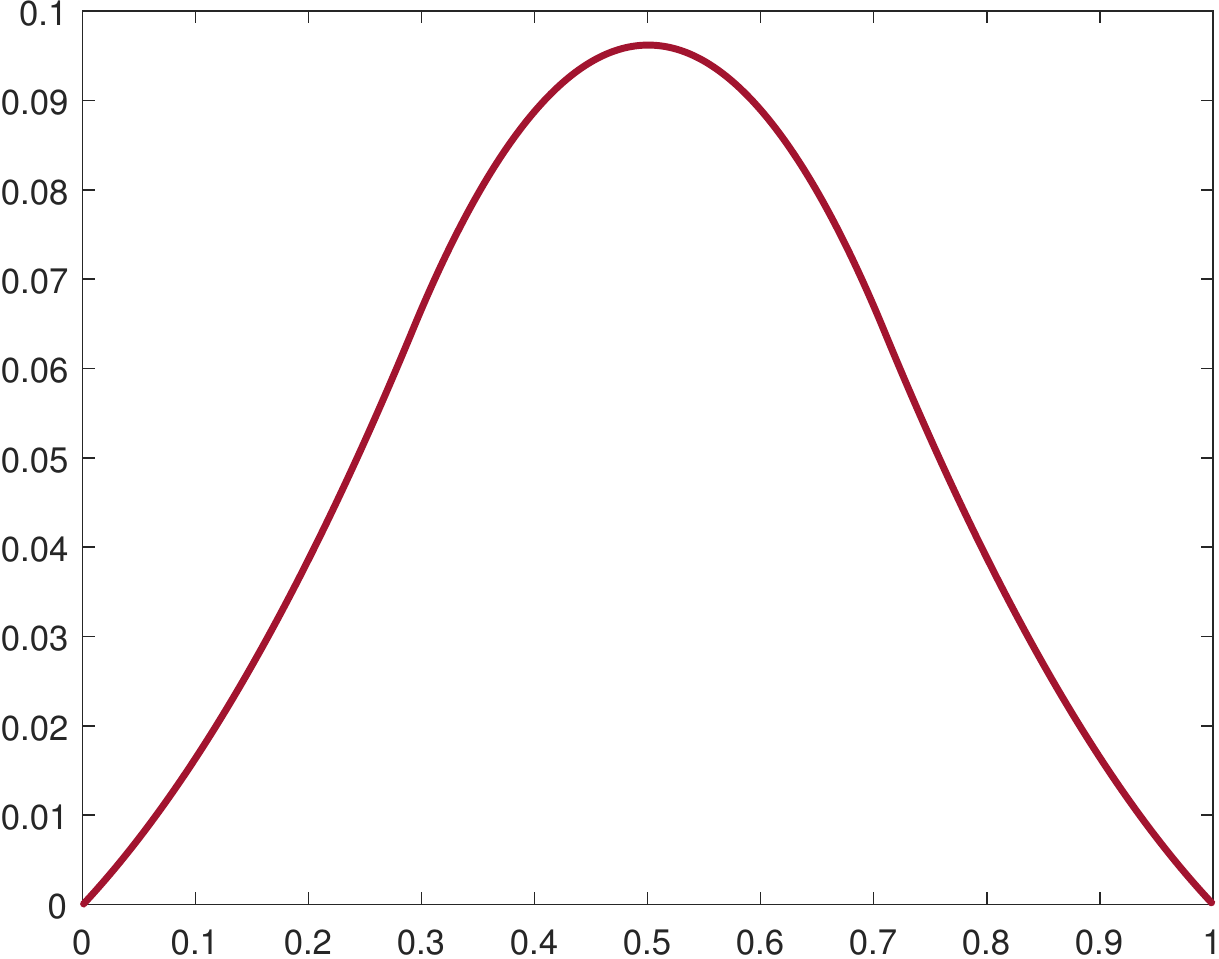}} 
			&
			{\includegraphics[width=0.24\textwidth]{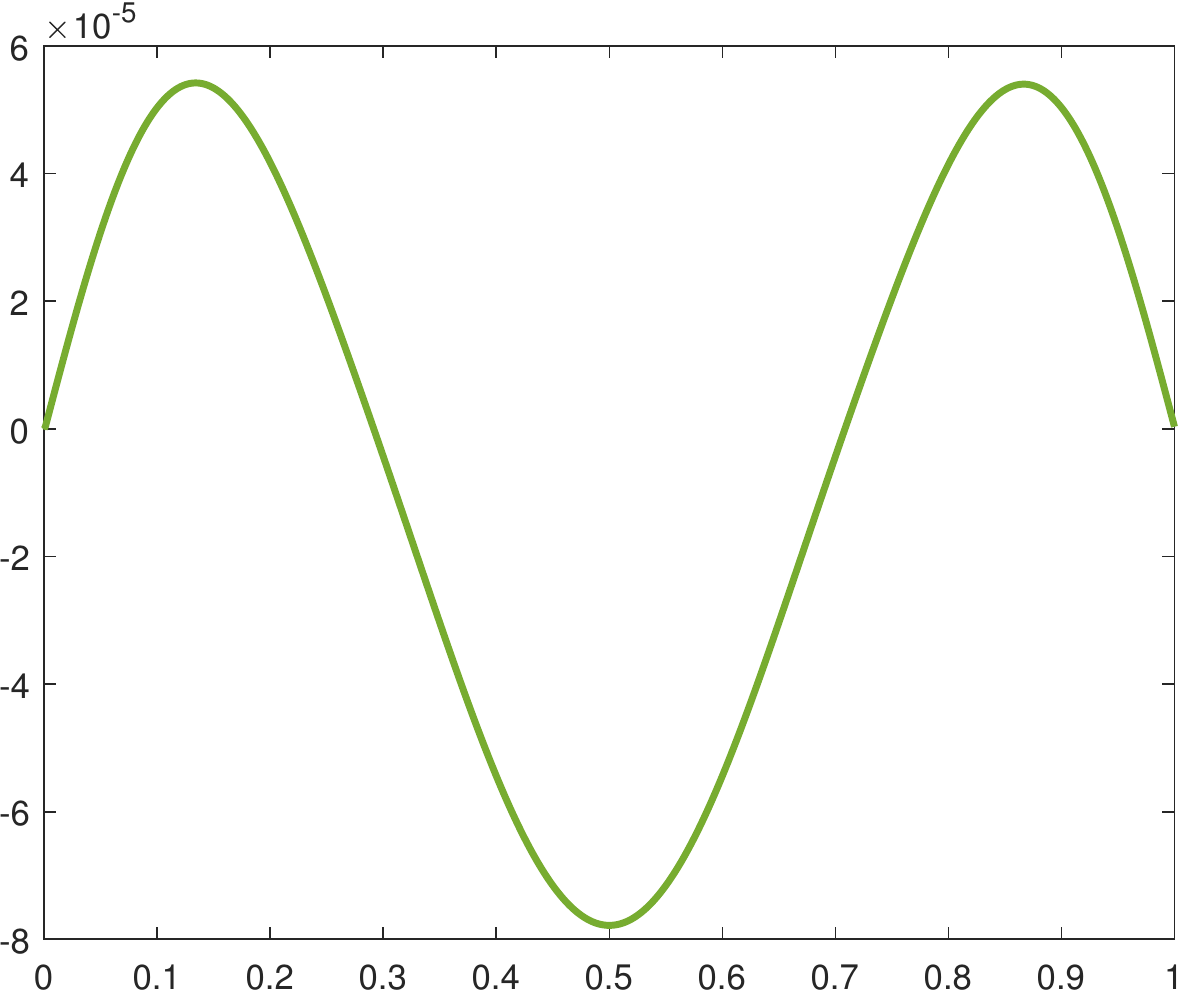}}
			&
			\raisebox{-0\height}{\includegraphics[width=0.24\textwidth]{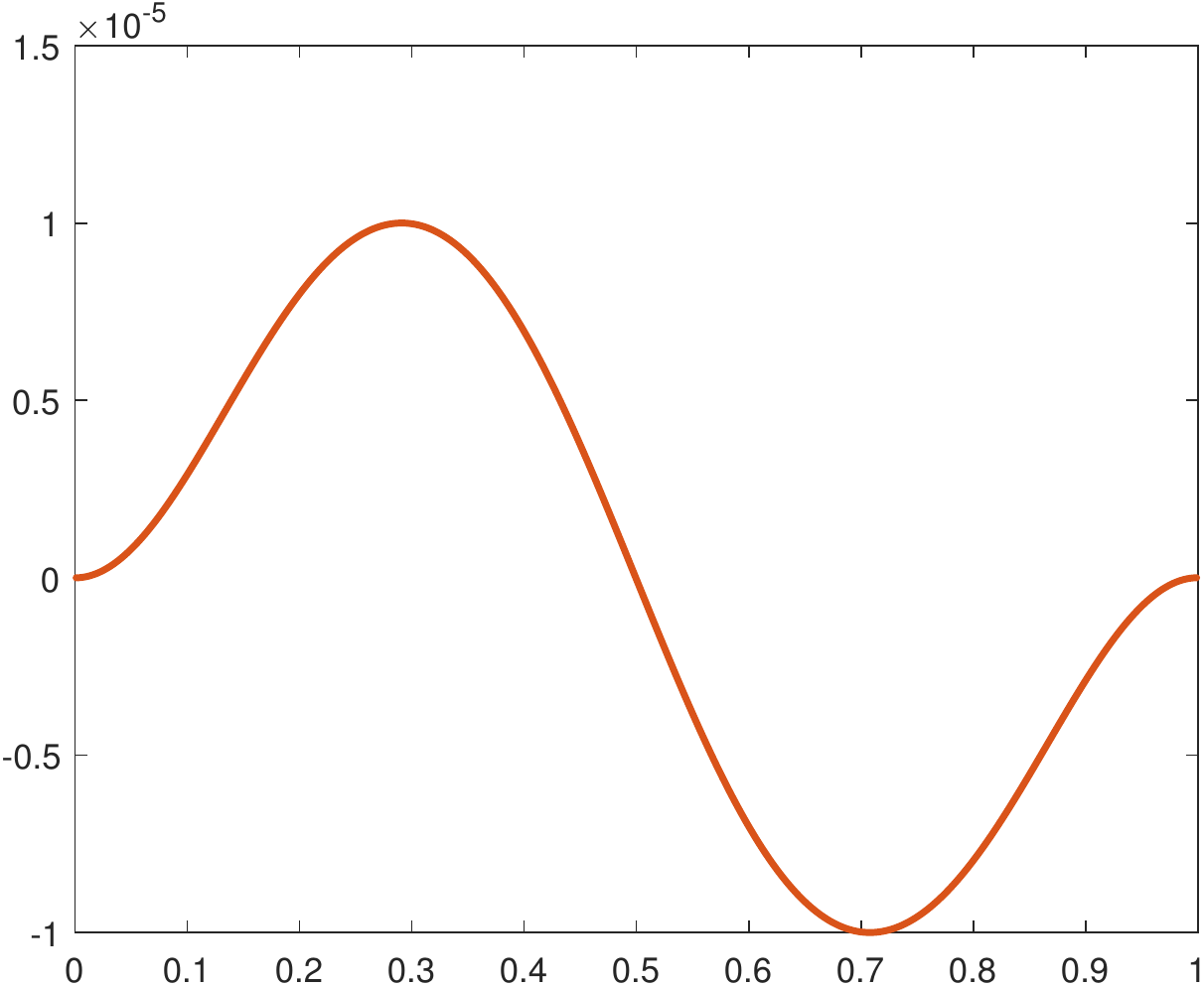}} 
			\\
			(A) $\bar \ct_h$ & (B) $\bar \st_h$ & (C) $\bar \ad_h$ & (D) $\bar \Phi_h$ \\
			\hline
		\end{tabular}
	\end{center}
	\caption{The variationally discrete solution to the data from Example 2 for $h = \tfrac{1}{1024}$. The inclusions in \eqref{eq:HHNsparsedisc3} are clearly visible.}
	\label{fig:HHNex2fine}
\end{figure}

In Figure \ref{fig:HHNex2conv} the errors between the known solutions and the solutions to the variationally discretized problem are depicted. Again, we observe that the order of convergence is approximately $h$, except for $|| \bar \ct - \bar \ct_h||_{L^2(Q)}$, which converges with the half rate. 

\begin{figure}
	\begin{center}
		\includegraphics[width=0.8\textwidth]{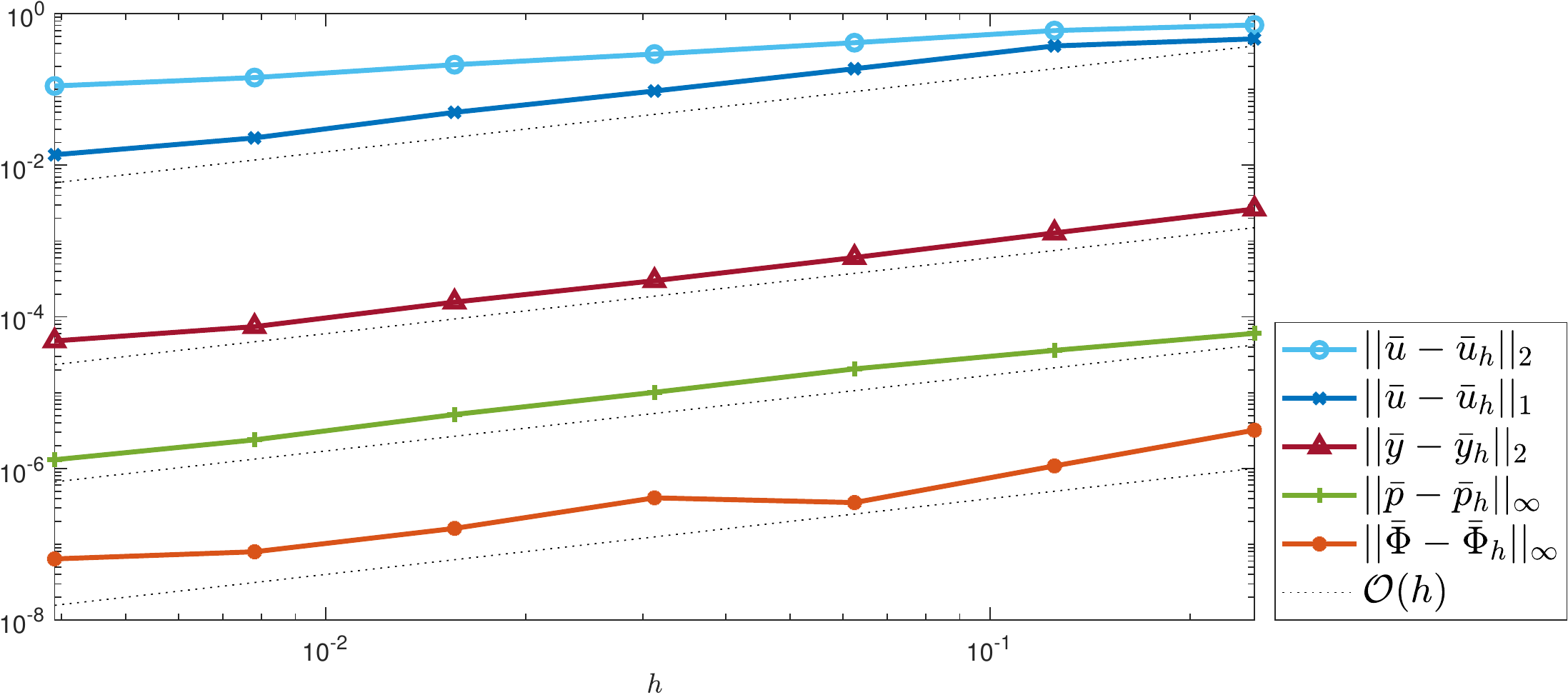}
		\caption{Example 2: Convergence plots of the errors of the solutions to the variationally discrete problem compared to the approximation of the exact solution. The reference solution is computed on a grid with $h = \tfrac{1}{1024}$.}
		\label{fig:HHNex2conv}
	\end{center}
\end{figure}

Furthermore, we calculate the convergence order $h^\alpha$ for the refinement from some gridsize $h_1$ to some other gridsize $h_2$ as explained before. The results are displayed in Table \ref{tab:HHNex2}.

\begin{table}[ht]
	\begin{tabular}[h]{l|l|l|l|l|l|l} 
		$h_1$ & $h_2$           & $|| \bar \ct - \bar \ct_h||_{L^1}$     & $|| \bar \ct - \bar \ct_h||_{L^2}$     & $|| \bar \st - \bar \st_h||_{L^2}$     & $|| \bar \ad -\bar \ad_h||_{L^{\infty}}$   & $|| \bar \Phi - \Phi_h||_{L^{\infty}}$\\
		\hline
		\hline
		0.2500 & 0.1250           & 0.3110 & 0.2454 & 1.0464 & 0.7448 & 1.5684 \\
		\hline
		0.1250 & 0.0625         & 0.9990 & 0.5319 & 1.0788 & 0.8119 & 1.6061 \\
		\hline
		0.0625 & 0.0313       & 0.9763 & 0.4961 & 1.0147 & 1.0266 & -0.2077 \\
		\hline
		0.0313 & 0.0156     & 0.9348 & 0.4682 & 0.9376 & 0.9737 & 1.3400 \\
		\hline
		0.0156 & 0.0078   & 1.1204 & 0.5630 & 1.0757 & 1.1106 & 1.0238 \\
		\hline
		0.0078 & 0.0039 & 0.7379 & 0.3679 & 0.6267 & 0.8702 & 0.3120 \\
		\hline
		\hline
		&  mean          & 0.8466 & 0.4454 & 0.9633 & 0.9230 & 0.9404\\
		\hline
		& slope of best fit & 0.9004 & 0.4679 & 0.9823 & 0.9450 & 0.9137
	\end{tabular}
	\caption{Example 2: Convergence order (potency of gridsize $h$) of the respective errors when the grid is refined from gridsize $h_1$ to gridsize $h_2$. We remark that the gridsizes are rounded.}
	\label{tab:HHNex2}
\end{table}


\noindent Altogether, we are able to verify the results we show in Section~\ref{sec:HHNVD}, i.e.  the inclusions from \eqref{eq:HHNsparsedisc3}, the sparsity structure of the control, and the error estimates for control, state, adjoint state and multiplier. \\

In \cite[Section 5]{hafemeyer2019} the same examples have been analyzed, but without employing a mixed formulation for the state equation. Under almost the same structural assumptions they get the following results: For a variational discretization approach with piecewise linear and continuous state and test functions they observe errors of the order $\mathcal{O}(h^2)$. Additionally, for a full discretization with piecewise constant control and piecewise linear and continuous state and test functions they see errors of the order $\mathcal{O}(h)$. 

In comparison, we consider a variational discretization approach combined with a mixed formulation of the state equation discretized with lowest order Raviart Thomas elements, which corresponds to $(\mix_h,\st_h) \in P_1 \times P_0$. We see that under the given structural assumption this leads to piecewise constant controls without discretizing the control. This clearly demonstrates that the discrete structure of the control in the variational discretization strategy can be \textit{controlled} through the scheme used for the discretization of the state equation.

We note that we obtain the same approximation order for the variationally discrete, piecewise constant controls as \cite{hafemeyer2019} for the full discretization with piecewise constant controls. However, the numerical analysis for the variationally discrete approach in our opinion is much simpler and more natural. We also note that the mixed finite element requires more degrees of freedom than the classical finite element approximation with piecewise linear, continuous elements. This then pays off with a more accurate approximation of the derivative of the state, which might be advantageous in situations where the accurate numerical approximation e.g. of stresses is required.\\

\textbf{Acknowledgment:} We thank Ira Neitzel for fruitful discussions on the topic, in particular related to the work \cite{hafemeyer2019}. We also thank the anonymous referees for their careful reading of the manuscript and the constructive remarks, which helped to improve our manuscript significantly.


\end{document}